\newtheorem{theorem}{Theorem}
\newtheorem{lemma}{Lemma}[section]
\newtheorem{definition}[lemma]{Definition}
\newtheorem{proposition}[lemma]{Proposition}
\newtheorem{remark}[lemma]{Remark}
\newtheorem{corollary}[lemma]{Corollary}
\newcommand{\cV}{\mathcal{V}}
\newcommand{\gm}{\gamma}
\newcommand{\M}{\mathcal{M}}
\newcommand{\R}{\mathbb{R}}
\newcommand{\cS}{\mathcal{S}}
\newcommand{\T}{\mathbb{T}}
\newcommand{\N}{\mathcal{N}}
\newcommand{\A}{\mathcal{A}}
\newcommand{\ri}{\rightarrow}
\newcommand{\eps}{\varepsilon}
\newcommand{\Proof}{\begin{proof}}
\newcommand{\End}{\end{proof}}
\newcommand{\EEnd}{\ensuremath{\hfill{\Box}}\\}
\numberwithin{equation}{section}
\begin{document}




\title[Aubry-Mather theory III]{Aubry-Mather theory for contact Hamiltonian systems III}

\author{Panrui Ni \and Lin Wang}
\address{Shanghai Center for Mathematical Sciences, Fudan University, Shanghai 200433, China}

\email{prni18@fudan.edu.cn}
\address{School of Mathematics and Statistics, Beijing Institute of Technology, Beijing 100081, China}

\email{lwang@bit.edu.cn}
%

\subjclass[2010]{37J51; 35F21; 35D40.}
\keywords{Aubry-Mather theory, weak KAM theory, contact Hamiltonian systems, Hamilton-Jacobi equations}

\begin{abstract}
\noindent By exploiting the { contact Hamiltonian dynamics $(T^*M\times\mathbb R,\Phi_t)$} around the Aubry set of contact Hamiltonian systems, we provide a relation among the Mather set, the {$\Phi_t$-}recurrent set, the strongly static set, the Aubry set, the Ma\~{n}\'{e} set and the {$\Phi_t$-}non-wandering set. Moreover, we consider the strongly static set, as a new flow-invariant set between the Mather set and the Aubry set, in the strictly increasing case. We show that this set plays an essential role in the representation of certain minimal forward weak KAM solution and the existence of  transitive orbits around the Aubry set.
\end{abstract}

\date{\today}
\maketitle

\tableofcontents


\section{Introduction}
\setcounter{equation}{0}
In \cite{MS,WWY2}, the Aubry-Mather theory was developed for  conformally symplectic systems and contact Hamiltonian systems with strictly increasing dependence on the contact variable $u$ respectively.  The conformally symplectic systems are closely related to discounted Hamiltonian systems (see {\it e.g.}, \cite{Dav,MK}), which serve as a class of typical examples for more general contact cases.
In \cite{WWY4}, the Aubry-Mather theory was further developed for contact Hamiltonian systems with non-decreasing dependence on $u$. More information on the Aubry set was founded, such as the comparison property {(that is, given any viscosity solutions $u_-,v_-$ of (\ref{hj}) below, if $u_-\leq v_-$ on the projected Aubry set, then $u_-\leq v_-$ everywhere)}, graph property and
  {a partially ordered relation}  for the collection of all projected Aubry sets with respect to backward weak KAM solutions.
Loosely speaking, the Aubry-Mather theory and weak KAM theory are two kinds of parallel  ways  to describe the global minimizing dynamics of contact Hamiltonian systems. The former is concerned with ``orbits", while the {latter} focus on ``weak KAM solutions". This kind of solutions can be viewed as certain generalization of generating functions in Hamiltonian systems.
One can also see \cite[Section 46]{Ar} for a vivid description on the connection between orbits and solutions of Hamilton-Jacobi equations.

\subsection{Basic assumptions}\label{basic}
Assume $M$ is a connected, closed (compact without boundary) and smooth Riemannian manifold. We choose, once and for all, a $C^\infty$ Riemannian metric $g$ on $M$. Denote by $\text{dist}(\cdot,\cdot)$ and  $d(\cdot,\cdot)$ the distance
 on $M$ and
  $T^*M\times\R$ induced by $g$ respectively. $C(M,\mathbb{R})$ stands for  the space of continuous functions on $M$. $\|\cdot\|_\infty$ denotes the supremum norm on $C(M,\mathbb{R})$.
$\|\cdot\|_x$ denotes a norm on $T^*_xM$ and $T_xM$. Let $H:T^*M\times\mathbb{R}\to \mathbb{R}$ be a $C^3$ function satisfying
\begin{itemize}
	\item [\textbf{(H1)}] {\it Strict convexity}:  $\frac{\partial^2 H}{\partial p^2} (x,p,u)$ is positive definite for all $(x,p,u)\in T^*M\times\R$;
	\item [\textbf{(H2)}] {\it Superlinearity}: for every $(x,u)\in M\times\R$, $H(x,p,u)$ is  superlinear in $p$;
	\item [\textbf{(H3)}] {\it Non-decreasing}: there is a constant $\lambda>0$ such that for every $(x,p,u)\in T^{\ast}M\times\R$,
		\begin{equation*}
		0\leq \frac{\partial H}{\partial u}(x,p,u)\leq \lambda.
		\end{equation*}
\end{itemize}
{Since results in this paper are based on the variational principle introduced in \cite{WWY}, which was proved under $C^3$ assumption for a technical reason, we assume $H(x,p,u)$ is of class $C^3$ here.} We consider the contact Hamiltonian system generated by
\begin{align}\label{c}\tag{CH}
\left\{
        \begin{array}{l}
        \dot{x}=\frac{\partial H}{\partial p}(x,p,u),\\
        \dot{p}=-\frac{\partial H}{\partial x}(x,p,u)-\frac{\partial H}{\partial u}(x,p,u)p,\qquad (x,p,u)\in T^*M\times\mathbb{R},\\
        \dot{u}=\frac{\partial H}{\partial p}(x,p,u)\cdot p-H(x,p,u).
         \end{array}
         \right.
\end{align}
{Denote the flow generated by (\ref{c}) by $\Phi_t$.} In order to handle global dynamics, it is necessary to  assume additionally
\begin{itemize}
\item [\textbf{(A)}] {\it Admissibility}:  there exists $a\in \R$ such that
\[\inf_{u\in C^\infty(M,\R)}\sup_{x\in M}H(x,Du,a)=0.\]
\end{itemize}
This formulation is inspired by the concept of the Ma\~{n}\'{e} critical value \cite{CIPP}.
From a PDE point of view, the assumption (A) holds true if and only if the stationary Hamilton-Jacobi equation
\begin{align}\label{hj}\tag{HJ}
H(x,Du,u)=0,\quad x\in M.
\end{align}
has a viscosity solution (see \cite[Theorem 1.4]{SWY}). If $H$ is independent of $u$, {according to \cite{LPV}, there is a unique constant $c_0$ called the critical value such that $H(x,Du)=c$ has viscosity solutions if and only if $c=c_0$. Here the admissibility holds if and only if $c_0$ is zero. If $c_0$ is not zero, we let $G:=H-c_0$. Then (A) always holds for the new Hamiltonian $G$.}

The necessity of (A) can be shown by the following example:
 \[H(x,p,u)=h(x,p)+g(x)u, \quad x\in \mathbb{T},\]
where { $\mathbb{T}=\mathbb R/\mathbb Z$} denotes a flat circle (once and for all). The function  $g:\mathbb{T}\to \R$ does not vanish identically and  satisfies $0\leq g(x)\leq \lambda$.
 If $g(x)>0$ for all $x\in \mathbb{T}$, based on the compactness of $\mathbb{T}$, $g(x)\geq \delta$ for certain positive constant $\delta$. In this case, {according to the comparison principle (see \cite[Theorem 3.2]{inc} for instance), $h(x,Du)+g(x)u=0$  has a unique viscosity solution.} Namely, (A) always holds.
If there exist $x_0\in \mathbb{T}$ such that  $g(x_0)=0$, then (A) may not hold. For example, consider the Hamilton-Jacobi equation
\[\frac{1}{2}|Du|^2+V(x)+g(x)u=0.\]
 Assume $V:\mathbb{T}\to\R$ is of class $C^3$ with $V(x_0)>0$ and $g(x_0)=0$. Then for all $a\in \R$,
\begin{align*}
&\inf_{u\in C^\infty(\mathbb{T},\R)}\sup_{x\in \mathbb{T}}\left\{\frac{1}{2}|Du|^2+V(x)+g(x)a\right\}\\
\geq &\inf_{u\in C^\infty(\mathbb{T},\R)}\left\{\frac{1}{2}|Du|^2+V(x_0)+g(x_0)a\right\}\\
= &V(x_0)>0.
\end{align*}
Therefore, (A) is necessary to be assumed.

\subsection{Aims, obstructions and contributions}

 In {\cite{Mn3}}, R. Ma\~{n}\'{e} obtained some properties of the Aubry set from the perspective of topological dynamics. Inspired by this work, we are concerned with the following problems.
\begin{itemize}
\item  The topological dynamics on the Aubry set, such as the recurrence property, the non-wandering property and their relations to the Mather set and Ma\~{n}\'{e} set. { In the following, we mean by ``recurrence property" and ``non-wandering property" the recurrence and non-wandering property with respect to the dynamics generated by contact Hamiltonian flow $\Phi_t$.}
\item  The representation of weak KAM solutions, and the {interplay} between weak KAM solutions and the dynamics  around the Aubry set.
\end{itemize}
{ In classical cases, i.e. ${\partial_u H}\equiv 0$, the backward and forward weak KAM solutions are one-to-one correspondent, which are called the conjugate pairs, see \cite{Fat-b}. Their sets have the same cardinality. Under (H1)-(H3) and (A), the structure of the set of the backward and the one of  forward weak KAM solutions are quite different from classical cases.} We have to deal with some new issues as follows.
\begin{itemize}
 \item  [(1)] In classical cases,  the Aubry set is chain-recurrent (see  \cite{CDI,Mn3}). Even in strictly increasing contact cases, the Aubry set may contain non-chain recurrent points (see Proposition \ref{exppp}(i)(ii) below).
    \item [(2)] In strictly increasing cases, the backward weak KAM solution is always unique. Unfortunately, the dynamics reflected by the backward weak KAM solution is too rough. Thus, one has to  exploit the structure of the set of forward weak KAM solutions to reveal more dynamical information. However, the  structure of this set is rather complicated (see Proposition \ref{exppp}(iii)(iv) below).
         \item [(3)]  The complicated structure of the set of weak KAM solutions causes  certain  difficulties to show the the {interplay} between weak KAM solutions and the dynamics  around the Aubry set. For example,  even if $\partial_u H$ vanishes at only one point,  some new phenomena from both dynamical and PDE aspects would appear. More precisely, we consider
\[\frac{1}{2}|Du|^2+f(x)u=0,\quad x\in \mathbb{T},\]
where  $f:\mathbb{T}\to\mathbb{R}$ is a $C^3$ function with $f(x_0)=0$, $f(x)>0$ for all $x\in \mathbb{T}\backslash\{x_0\}$. It is clear that $u\equiv 0$ is a viscosity solution. Besides,
 there exists an uncountable family of nontrivial  viscosity solutions $\{v_i\}_{i\in I}$ and the definition of the Aubry set essentially depends on $v_i$ (see \cite[Proposition 1.11]{WWY4} for more details).
    \end{itemize}

Corresponding to the issues above, we summarize the main contributions in this paper  as follows:
\begin{itemize}
 \item  Regarding Item (1), we find the Aubry set is too large to characterize the dynamics with recurrent or non-wandering property. Thus, we introduce so called the strongly static set, which is a new flow invariant subset of the Aubry set. We prove that the strongly static set is always non-wandering. Moreover, in order to locate this set in a series of action minimizing invariant sets, we prove an inclusion relation among the Mather set, the Aubry set, the Ma\~{n}\'{e} set, the recurrent points and the non-wandering points. {For the definitions of the first three sets, see Section \ref{auset} below. The latter two sets are the basic concepts in the classical theory of topological dynamical systems.} This result is given by Theorem \ref{raomeome}. It is worth mentioning that the strongly static set always coincides with the Aubry set in the classical case (see Remark \ref{asu} below).
     \item  Regarding  Item (2), we focus on the structure of the set of forward weak KAM solutions  in the strictly increasing case. The existence of the {\it maximal} element in this set was shown in \cite{WWY2}. Unfortunately, the {\it minimal} forward weak KAM solution may not exist in the sense of total ordering. For example, \begin{equation}\label{conterex}
{u}+\frac{1}{2}|D{ u}|^2=0,\quad x\in\mathbb{T}.
\end{equation}
Let $(-\frac 12,\frac 12]$  be a fundamental domain of $\mathbb{T}$. Let $\cS_+$ be the set of all forward weak KAM solutions of (\ref{conterex}). A direct calculation shows
\[v(x):=\min_{\cS_+}v_+(x)\equiv -\frac{1}{8},\]
which is not a forward weak KAM solution of (\ref{conterex}). As a substitute, we prove the existence of the {\it minimal} forward weak KAM solution in a partially ordered sense by virtue of Zorn's lemma. { In the classical case ${\partial_u H}\equiv 0$, both of the sets of backward and forward solutions are not bounded. Hence, it is not meaningful to introduce the maximal  or minimal solutions. On the other hand,  given a backward solution in the classical case, the forward one conjugated to this solution is unique. In this sense, the maximal solution is the same as the minimal one.} Moreover, we  show that the strongly static set plays an essential role in the representation of the {\it minimal} forward weak KAM solution. This result is given by Theorem \ref{pooo}. Loosely speaking, in the strictly increasing cases, the Aubry set is only related to the unique backward weak KAM solution and the maximal one. The strongly static set is necessarily involved in order to characterize the property of forward weak KAM solutions except the maximal one.
     \item   Regarding  Item (3), since the Aubry set may contain wandering points, we need to introduce a more flexible dynamics to detect the {interplay} between weak KAM solutions and the dynamics  around the Aubry set. The non-wandering property can be viewed as ``neighborhood recurrence".  Thus, we consider a kind of dynamical property that can be viewed as ``neighborhood transition". More precisely, we introduce the following definition.
\begin{definition}[{Transitive orbit}]\label{ccnnet} {Given $X_1, X_2\in T^*M\times\R$, we say there is a transitive orbit from $X_1$ to $X_2$ if for any neighborhoods $U_1$ of $X_1$ and  $U_2$ of $X_2$, there exists an orbit that begins in $U_1$ and later passes through $U_2$.}
\end{definition}
\begin{remark}
It is clear that a transitive orbit { from $X_1$ to $X_2$} is a { special} pseudo-orbit with arbitrarily small jumps. In particular, { there are at most two jumps of the transitive orbit, and these jumps are only allowed to happen around the adjoining points of $X_1$ and $X_2$. {In \cite[Definition 1.1.8]{PS}, the authors defined a relation $X_1\rightsquigarrow X_2$ if there is a pseudo-orbit from $X_1$ to $X_2$. In the following, we still write} $X_1\rightsquigarrow X_2$ if there is a transitive orbit from $X_1$ to $X_2$. }
\end{remark}

Finally, we obtain a result on the interplay among weak KAM solutions, the strongly static set and the existence of  transitive orbits around the Aubry set. This result is given by Theorem \ref{CONC}.
    \end{itemize}

\section{Statement of main results}

To state the main results (Theorem \ref{raomeome}, Theorem \ref{pooo} and Theorem \ref{CONC} below) in a precise way, we need to prepare some notions and notations. They mainly come from \cite{WWY,WWY1,WWY2,WWY3,WWY4}.

\subsection{Notions and notations}
\subsubsection{Weak KAM solutions}
Let $L:TM\times\R\to\R$ be the contact Lagrangian  associated to $H(x,p,u)$ via
\[
L(x,\dot{x},u):=\sup_{p\in T^*_xM}\left\{\langle \dot{x},p\rangle_x-H(x,p,u)\right\},
\]
where $\langle \cdot ,\cdot\rangle_x$ represents the canonical pairing between the tangent and cotangent space at $x\in M$. Since $H$ satisfies (H1), (H2) and (H3), then
 $L(x,\dot{x},u)$  satisfies
\begin{itemize}
\item [\textbf{(L1)}] {\it Strict convexity}:  $\frac{\partial^2 L}{\partial {\dot{x}}^2} (x,\dot{x},u)$ is positive definite for all $(x,\dot{x},u)\in TM\times\R$;
    \item [\textbf{(L2)}] {\it Superlinearity}: for every $(x,u)\in M\times\R$, $L(x,\dot{x},u)$ is superlinear in $\dot{x}$;
\item [\textbf{(L3)}] {\it Non-increasing}: there is a constant $\lambda>0$ such that for every $(x,\dot{x},u)\in TM\times\R$,
                        \begin{equation*}
                        -\lambda\leq \frac{\partial L}{\partial u}(x,\dot{x},u)\leq 0.
                        \end{equation*}
\end{itemize}

{Following Fathi \cite{Fat-b}, one can define weak KAM solutions of (\ref{hj}), see \cite[Definitions 2.1 and 2.2]{SWY}. According to \cite[Lemmas 4.1, 4.2, 6.2]{SWY}), the backward weak KAM solutions of (\ref{hj}) are equivalent to the viscosity solutions.}
\begin{definition}\label{bwkam}
	A function $u_-\in C(M,\mathbb{R})$ is called a backward weak KAM solution of \eqref{hj} if
	\begin{itemize}
		\item [(i)] for each continuous piecewise $C^1$ curve $\gamma:[t_1,t_2]\rightarrow M$, we have
		\[
		u_-(\gamma(t_2))-u_-(\gamma(t_1))\leq\int_{t_1}^{t_2}L(\gamma(s),\dot{\gamma}(s),u_-(\gamma(s)))ds;
		\]
		\item [(ii)] for each $x\in M$, there exists a $C^1$ curve $\gamma:(-\infty,0]\rightarrow M$ with $\gamma(0)=x$ such that
		\begin{align}\label{cali1}
		u_-(x)-u_-(\gamma(t))=\int^{0}_{t}L(\gamma(s),\dot{\gamma}(s),u_-(\gamma(s)))ds, \quad \forall t<0.
		\end{align}
	\end{itemize}

Similarly, 	a function $u_+\in C(M,\mathbb{R})$ is called a forward weak KAM solution of  \eqref{hj} if it satisfies (i) and
for each $x\in M$, there exists a $C^1$ curve $\gamma:[0,+\infty)\rightarrow M$ with $\gamma(0)=x$ such that
		\begin{align}\label{cali2}
		u_+(\gm(t))-u_+(x)=\int_{0}^{t}L(\gamma(s),\dot{\gamma}(s),u_+(\gamma(s)))ds,\quad \forall t>0.
		\end{align}
	We denote by $\mathcal{S}_-$ (resp. $\mathcal{S}_+$) the set of backward (resp. forward) weak KAM solutions of equation \eqref{hj}.
\end{definition}


\subsubsection{Action minimizing objects}\label{auset}

The definitions of the action minimizing invariant sets  are based on the variational principle of contact Hamiltonian systems. See \cite[Theorem A]{WWY} for the following result, which holds under (H1), (H2) and $|\frac{\partial H}{\partial u}|\leq \lambda$ instead of (H3).
\begin{proposition}\label{IVP}
	For any given $x_0\in M$, $u_0\in\mathbb{R}$, there exists a  continuous function $h_{x_0,u_0}(x,t)$ defined on $M\times(0,+\infty)$
	satisfying
\begin{equation}\label{baacf1}
	h_{x_0,u_0}(x,t)=u_0+\inf_{\substack{\gamma(0)=x_0 \\  \gamma(t)=x} }\int_0^tL(\gamma(\tau),\dot{\gamma}(\tau),h_{x_0,u_0}(\gamma(\tau),\tau))d\tau,
\end{equation}
where the infimum is taken among  Lipschitz continuous curves $\gamma:[0,t]\rightarrow M$.
 Moreover, the infimum in (\ref{baacf1}) is achieved. Let $\gamma$ be a Lipschitz curve achieving the infimum and
	\[x(s):=\gamma(s),\ u(s):=h_{x_0,u_0}(\gamma(s),s),\  p(s):=\frac{\partial L}{\partial \dot{x}}(\gamma(s),\dot{\gamma}(s),u(s)).
	\]
	Then $(x(\cdot),p(\cdot),u(\cdot)):[0,t]\to T^*M\times\R$ satisfies equations (\ref{c}) with $x(0)=x_0$, $x(t)=x$ and \[\lim_{s\rightarrow 0^+}u(s)=u_0.\]
	\end{proposition}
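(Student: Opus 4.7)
The plan is to construct $h_{x_0,u_0}$ as a fixed point of a self-referential variational problem, then recover the minimizer and verify it solves (\ref{c}) via the Legendre transform.

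First I would address the implicit definition via a fixed-point approach. For any $T>0$, on the Banach space $C(M\times[0,T],\mathbb R)$ equipped with the sup norm, define the operator
\begin{equation*}
\mathcal{T}[w](x,t):=u_0+\inf_{\substack{\gamma(0)=x_0\\ \gamma(t)=x}}\int_0^t L\bigl(\gamma(\tau),\dot\gamma(\tau),w(\gamma(\tau),\tau)\bigr)\,d\tau.
\end{equation*}
Using (L3), i.e.\ $|\partial_u L|\le\lambda$, one gets for two candidates $w_1,w_2$ and a minimizing curve $\gamma_1$ for $w_1$ the estimate
\begin{equation*}
\mathcal T[w_2](x,t)-\mathcal T[w_1](x,t)\le\int_0^t\lambda\,\|w_2-w_1\|_\infty\,d\tau\le\lambda t\,\|w_2-w_1\|_\infty,
\end{equation*}
and symmetrically. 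Thus on a short enough time slab $[0,T]$ with $\lambda T<1$, $\mathcal T$ is a contraction and admits a unique fixed point $h_{x_0,u_0}$. Global existence on $(0,+\infty)$ is then obtained by iteration: once $h_{x_0,u_0}$ is defined up to time $T$, we restart the procedure with new initial data $(x_1,u_1)=(y,h_{x_0,u_0}(y,T))$ and concatenate. Continuity of $h_{x_0,u_0}$ on $M\times(0,+\infty)$ follows from the uniform Lipschitz estimates on minimizers provided by (L1) and (L2).

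Next I would prove that the infimum is attained. For fixed $(x,t)$, take a minimizing sequence $\{\gamma_n\}$ of absolutely continuous curves with $\gamma_n(0)=x_0$, $\gamma_n(t)=x$. Superlinearity (L2), together with the uniform boundedness of $h_{x_0,u_0}$ on the relevant compact set, yields a uniform bound on $\int_0^t|\dot\gamma_n|^2\,d\tau$ (Dunford--Pettis / Tonelli-type compactness). Hence $\{\gamma_n\}$ is equi-absolutely continuous, and Arzelà--Ascoli together with weak compactness of $\dot\gamma_n$ in $L^1$ provides a limit curve $\gamma$. Lower semicontinuity of the action functional, which uses the strict convexity (L1) in $\dot x$ and continuity of $h_{x_0,u_0}$ in $(x,t)$, gives that $\gamma$ realizes the infimum.

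Finally I would verify the contact Hamilton equations. Let $\gamma$ be a minimizer, set $u(s):=h_{x_0,u_0}(\gamma(s),s)$ and $p(s):=\partial_{\dot x}L(\gamma(s),\dot\gamma(s),u(s))$. The dynamic programming identity $h_{x_0,u_0}(\gamma(s),s)=u_0+\int_0^s L(\gamma,\dot\gamma,u)\,d\tau$ shows $\dot u=L(\gamma,\dot\gamma,u)$, which via the Fenchel identity $L+H=\langle\dot x,p\rangle$ becomes the third equation of (\ref{c}). Making variations of $\gamma$ with fixed endpoints $x_0,x$ and accounting for the variation of $u$ through the self-referential dependence (a Gronwall argument using (L3) shows the contribution is of lower order), one derives an Euler--Lagrange-type equation
\begin{equation*}
\frac{d}{ds}\partial_{\dot x}L=\partial_x L+\partial_u L\cdot p,
\end{equation*}
which, after the Legendre transform $p=\partial_{\dot x}L$, $H+L=\langle\dot x,p\rangle$, translates exactly into the first two equations of (\ref{c}). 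The initial condition $\lim_{s\to 0^+}u(s)=u_0$ is built into the construction.

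The main obstacle is the implicit, self-referential character of the variational problem: one cannot apply the classical Tonelli theory directly because the Lagrangian being integrated depends on the unknown $h_{x_0,u_0}$ itself along the curve. Handling this requires the contraction argument driven by (L3) and, at the Euler--Lagrange step, a careful accounting of how $u$-variations propagate along the minimizer (a Gronwall estimate of order $e^{\lambda t}$), both of which are precisely what the non-decreasing assumption (H3)/(L3) enables.
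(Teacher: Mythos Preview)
The paper does not prove this proposition; it is quoted from \cite[Theorem A]{WWY}. So the comparison is with the approach of that reference, which proceeds differently from your sketch.

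Your contraction argument has a genuine gap. You propose to find $h_{x_0,u_0}$ as a fixed point of $\mathcal T$ acting on $C(M\times[0,T],\R)$ with the sup norm, but $\mathcal T$ does not map this space to itself: for $t=0$ and $x\neq x_0$ there is no admissible curve, so $\mathcal T[w](x,0)=+\infty$. More fundamentally, the object you are trying to construct is not in $C(M\times[0,T],\R)$ at all: as the paper itself shows (Lemma \ref{fu1}), $h_{x_0,u_0}(x,t)\to+\infty$ as $t\to 0^+$ whenever $x\neq x_0$. No choice of $T$ with $\lambda T<1$ repairs this; the singularity at $t=0$ is intrinsic, so the contraction cannot be run in the sup norm on the full slab. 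Your ``restart and concatenate'' step then inherits the same difficulty, and in any case restarting simultaneously from every $(y,h_{x_0,u_0}(y,T))$ does not obviously yield a single function.

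The approach in \cite{WWY} sidesteps this by working curve-wise rather than in a function space: for each absolutely continuous $\gamma:[0,t]\to M$ with $\gamma(0)=x_0$, one solves the Carath\'eodory ODE
\[
\dot u_\gamma(s)=L(\gamma(s),\dot\gamma(s),u_\gamma(s)),\qquad u_\gamma(0)=u_0,
\]
which is well-posed by $|\partial_u L|\le\lambda$ and Gronwall, and then sets $h_{x_0,u_0}(x,t):=\inf_\gamma u_\gamma(t)$. The implicit identity (\ref{baacf1}) and the boundary condition $\lim_{s\to 0^+}u(s)=u_0$ along minimizers are then automatic, Tonelli-type compactness is applied to the functional $\gamma\mapsto u_\gamma(t)$ (where boundedness of $u_\gamma$ along each candidate curve follows from Gronwall, not from any global bound on $h_{x_0,u_0}$), and the contact Euler--Lagrange equations come from the Herglotz variational principle applied to this well-defined action. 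Your derivation of the equations of motion is in the right spirit, but the first-variation step needs this curve-wise formulation: varying $\gamma$ inside $h_{x_0,u_0}(\gamma(\tau),\tau)$, with $h_{x_0,u_0}$ an a priori non-smooth value function, is not a rigorous computation, whereas varying $\gamma$ inside $u_\gamma$ is.
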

 We associate to  $h_{x_0,u_0}(x,t)$ another action function $h^{x_0,u_0}(x,t)$, which is also  defined implicitly by
\begin{align}\label{2-3}
h^{x_0,u_0}(x,t)=u_0-\inf_{\substack{\gamma(t)=x_0 \\  \gamma(0)=x } }\int_0^tL(\gamma(\tau),\dot{\gamma}(\tau),h^{x_0,u_0}(\gamma(\tau),t-\tau))d\tau,
\end{align}
where the infimum is taken among Lipschitz continuous curves $\gamma:[0,t]\rightarrow M$.


Based on the action functions, one can define action minimizing curves, see \cite[Definition 3.1]{WWY2}.

\begin{definition}[Globally minimizing curves]\label{gm}
	 A curve $(x(\cdot),u(\cdot)):\mathbb{R}\to M\times\mathbb{R}$ is called globally minimizing, if it is locally Lipschitz and
		 for each $t_1$, $t_2\in\mathbb{R}$ with $t_1< t_2$, there holds
		\begin{align}\label{globalm}
		u(t_2)=h_{x(t_1),u(t_1)}(x(t_2),t_2-t_1).
		\end{align}
\end{definition}
The positively minimizing curves (resp. negatively minimizing curves) can be defined in a similar manner. We say positively (resp. negatively), we mean the curve is defined on $\mathbb R_+$ (resp. $\mathbb R_-$), and (\ref{globalm}) holds for $t_1$, $t_2\in\mathbb R_+$ (resp. $\in\mathbb R_-$). If a curve $(x(\cdot),u(\cdot)):\mathbb{R}\to M\times\mathbb{R}$ is  global minimizing, then $x:\R\to M$ is of class $C^1$. Let
\begin{equation}\label{p}
p(t):=\frac{\partial L}{\partial \dot{x}}(x(t),\dot{x}(t),u(t)),\ \ t\in\R.
\end{equation}
Then $(x(\cdot),p(\cdot),u(\cdot)):\R\to T^*M\times\R$ satisfies equations (\ref{c}) (see \cite[Propostion 3.1]{WWY2}).
Following Ma\~{n}\'{e} \cite{Mn3},  the notion of static and semi-static curves for contact Hamiltonian systems were introduced in \cite[Defintion 3.2]{WWY2} and \cite[Definition 1.1]{WWY4} respectively.

\begin{definition}[Semi-static curves]\label{semdepp}
A curve $(x(\cdot),u(\cdot)):\mathbb{R}\to M\times\mathbb{R}$ is called semi-static, if it is globally minimizing and for each $t_1\leq t_2\in\mathbb{R}$, there holds
	\begin{equation}\label{3-3}
	u(t_2)=\inf_{s>0}h_{x(t_1),u(t_1)}(x(t_2),s).
	\end{equation}
\end{definition}

\begin{definition}[Semi-static orbits]\label{semorbit}
If a curve $(x(\cdot),u(\cdot)):\mathbb{R}\to M\times\mathbb{R}$ is  semi-static , then $(x(\cdot),p(\cdot),u(\cdot)):\R\to T^*M\times\R$ satisfies equations (\ref{c}), where $p(\cdot)$ is determined by (\ref{p}). We call it  a  semi-static  orbit.
\end{definition}
The positively  (resp. negatively) semi-static orbits can be also defined in a similar manner. {Recall that the flow generated by (\ref{c}) is $\Phi_t$. We define some $\Phi_t$-invariant sets as follows.}

%
%

\begin{definition}[Ma\~{n}\'{e} set]\label{audeine}
 We call the set of all semi-static orbits the Ma\~{n}\'{e} set for $H$, denoted by $\tilde{\mathcal{N}}$.
\end{definition}
We call $\mathcal{N}:=\pi^*\tilde{\mathcal{N}}$ the projected Ma\~{n}\'{e} set. We denote, once and for all
\[ \pi^*:T^*M\times\R\rightarrow M.\]
 We define $\tilde{\mathcal{N}}^+$ (resp. $\tilde{\mathcal{N}}^-$) as the set of all positively (resp. negatively) semi-static orbits.

\begin{definition}[Static curves]\label{stcont}
	A curve $(x(\cdot),u(\cdot)):\mathbb{R}\to M\times\mathbb{R}$ is called static, if it is globally minimizing and for each $t_1, t_2\in\mathbb{R}$, there holds
	\begin{equation}\label{3-399}
	u(t_2)=\inf_{s>0}h_{x(t_1),u(t_1)}(x(t_2),s)
\end{equation}
\end{definition}
\noindent A {\it static orbit} is defined as $(x(\cdot),p(\cdot),u(\cdot)):\R\ri T^*M\times\R$, where $p(\cdot)$ is determined by (\ref{p}).

\begin{definition}[Aubry set]\label{audeine99}We call the set of all static orbits  the Aubry set for $H$, denoted by $\tilde{\mathcal{A}}$. The Aubry set is also called the static set.
\end{definition}
We call $\mathcal{A}:=\pi^*\tilde{\mathcal{A}}$ the projected Aubry set.
Following Mather \cite{M1}, we define a subset of the Aubry set from a measure theoretic point of view, so called the Mather set.
Based on Proposition \ref{88996} below,
there exist Borel $\Phi_t$-invariant probability measures supported in $\tilde{\mathcal{N}}$, called {\it Mather measures}. Denote by $\mathfrak{M}$ the set of Mather measures. The {\it Mather set} of contact Hamiltonian systems (\ref{c}) is defined by
\begin{equation}\label{matherset}
  \tilde{\mathcal{M}}=\mathrm{cl}\left(\bigcup_{\mu\in \mathfrak{M}}\text{supp}(\mu)\right),
\end{equation}
where $\text{supp}(\mu)$ denotes the support of $\mu$ { and $\mathrm{cl}(A)$ denotes the closure of $A\subseteq T^*M\times \R$.}

The invariance of these sets above follows directly from their definitions.

\vspace{1ex}
\subsubsection{Strongly static set}\label{disss}

If $H$ is independent of $u$,  the Aubry set is chain-recurrent. Unfortunately, it is not true in general contact settings. In order to characterize the chain-recurrence in the Aubry set, we introduce a new flow invariant set, called strongly static set.
\begin{definition}[Strongly static curves]\label{stcontx}
	A curve $(x(\cdot),u(\cdot)):\mathbb{R}\to M\times\mathbb{R}$ is called strongly static, if it is globally minimizing and for each $t_1, t_2\in\mathbb{R}$, there holds
	\begin{equation}\label{3-399x}
	u(t_2)=\sup_{s>0}h^{x(t_1),u(t_1)}(x(t_2),s).
\end{equation}
\end{definition}
A {\it strongly static orbit} is defined as $(x(\cdot),p(\cdot),u(\cdot)):\R\ri T^*M\times\R$, where $p(\cdot)$ is determined by (\ref{p}).
\begin{definition}[Strongly static set]\label{stcontxss}
	\[ \tilde{\mathcal{S}}_s:=\mathrm{cl}(\{\text{all\  strongly\  static\  orbits}\}). \]
\end{definition}
\begin{remark}\label{asu}
If $H(x,p,u)$ is independent of $u$, then
\[h_{x_0,u_0}(x,t)=u_0+h^t(x_0,x),\quad h^{x_0,u_0}(x,t)=u_0-h^t(x,x_0).\]
By definition, we have the Aubry set is the same as the strongly static set. {Loosely speaking, the set of strongly static curves is closely related to the set of forward weak KAM solutions $\cS_+$. In the contact setting under the assumptions (H1)-(H3), $\mathcal{S}_+$ is much more complicated than $\cS_-$ in general sense. Then it is natural to expect that  the strongly static set contains more specific dynamical properties compared to the static one.}
\end{remark}

In general cases, the differences between $\tilde{\A}$ and $\tilde{\mathcal{S}}_s$ are shown by the following Proposition \ref{exppp}. For the consistency, we postpone its proof in \ref{proofexppp}.
\begin{proposition}\label{exppp}
Let $\lambda>0$ and
\begin{equation}\label{exhxxx}\tag{E}
H(x,p,u):=\lambda u+\frac{1}{2}|p|^2+p\cdot V(x),\quad x\in T,
\end{equation}
where $\mathbb{T}$ denotes a flat circle and $V:\mathbb{T}\ri \R$ is a $C^3$ function which has exactly two vanishing points $x_1$, $x_2$ with $V'(x_1)>0$, $V'(x_2)<0$. Let $\cS_-$  and $\cS_+$ be the set of the backward  and forward weak KAM solutions of (\ref{exhxxx}) respectively. Then $u_-\equiv 0$ is the unique element in $\cS_-$. Thus, $u_+\equiv 0\in \cS_+$. Moreover,
\begin{itemize}
\item [(i)] if $\lambda>|V'(x_2)|$, then the point $(x_2,0,0)$ is a sink  in $T^*\mathbb{T}\times\R$;
\item [(ii)] for any $\lambda>0$, \[\tilde{\A}=\left\{(x,0,0)\ |\ x\in \mathbb{T}\right\},\quad \tilde{\mathcal{S}}_s=\left\{(x_1,0,0),\ (x_2,0,0)\right\};\]
    \item [(iii)] if $\lambda<|V'(x_2)|$, the set $\cS_+$ consists of  two elements $u_+\equiv 0$ and $w_+$, where $w_+:\mathbb{T}\to\R$ satisfies $w_+(x_1)=0$, $w_+(x)<0$ for each $x\in \mathbb{T}\backslash \{x_1\}$;
\item [(iv)] for $\lambda$ large enough, $\cS_+$ may contains more than two elements.
\end{itemize}
\end{proposition}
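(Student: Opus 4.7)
Since $H(x,p,u)=\lambda u+\tfrac12 p^2+pV(x)$, the Legendre dual is $L(x,\dot x,u)=\tfrac12(\dot x-V(x))^2-\lambda u$, which is affine in $u$. I would therefore integrate the ODE $\dot u=L(\gamma,\dot\gamma,u)$ along any Lipschitz curve via the integrating factor $e^{\lambda t}$, obtaining
\begin{gather*}
h_{x_0,u_0}(x,t)=e^{-\lambda t}u_0+\inf_\gamma\int_0^t e^{-\lambda(t-s)}\tfrac12(\dot\gamma-V)^2\,ds, \\
h^{x_0,u_0}(x,t)=e^{\lambda t}u_0-\inf_\gamma\int_0^t e^{\lambda s}\tfrac12(\dot\gamma-V)^2\,ds,
\end{gather*}
over Lipschitz curves with the prescribed endpoints. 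In particular $u\equiv 0$ is a classical solution of (\ref{hj}); since $\partial_u H\equiv\lambda>0$, the standard comparison principle yields $\cS_-=\{0\}$, so $0\in\cS_+$ as well. Item (i) then follows from linearising $\Phi_t$ at the equilibrium $(x_2,0,0)$: the Jacobian is upper-triangular (since $V(x_2)=0$) with eigenvalues $V'(x_2)$, $-(V'(x_2)+\lambda)$, and $-\lambda$, all strictly negative when $\lambda>|V'(x_2)|$, so the point is a hyperbolic sink.

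For (ii), the zero section $\Sigma:=\{(x,0,0):x\in\T\}$ is $\Phi_t$-invariant with projected flow $\dot x=V(x)$, and the $V$-segment on any orbit realises $h_{x(t_1),0}(x(t_2),t_2-t_1)=0$ for $t_1\le t_2$. For $t_2<t_1$, I would use a ``wraparound-plus-discount'' competitor: first spend a fixed time $T_1$ moving from $x(t_1)$ around the circle to a point $y$ close to $x_1$ (with finite action depending only on $V$), then follow the $V$-flow from $y$ forward to $x(t_2)$ in time $\tau\to\infty$ as $y\to x_1$; the discount factor $e^{-\lambda(s-\cdot)}\le e^{-\lambda\tau}$ kills the early cost, giving $\inf_{s>0}h_{x(t_1),0}(x(t_2),s)=0$ and hence $\Sigma\subset\tilde{\A}$. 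The converse uses that along any static orbit $u=u_-\equiv 0$ identically, which combined with $\dot u=\tfrac12 p^2-\lambda u=\tfrac12 p^2$ forces $p\equiv 0$. For the strongly static set, the opposite weight $e^{\lambda s}$ in $h^{x_0,u_0}$ cannot be discounted away, so the analogous supremum is strictly negative on every non-stationary orbit in $\Sigma$; only the fixed points $(x_1,0,0)$ and $(x_2,0,0)$, where the constant curve $\gamma\equiv x_i$ has zero action, satisfy $\sup_{s>0}h^{x_i,0}(x_i,s)=0$, giving $\tilde{\cS}_s=\{(x_1,0,0),(x_2,0,0)\}$.

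For (iii), the condition $\lambda<|V'(x_2)|$ makes $-(V'(x_2)+\lambda)$ strictly positive, so $(x_2,0,0)$ becomes a hyperbolic saddle with a one-dimensional unstable manifold $W^u$ tangent to the $(x,p)$-plane, while $(x_1,0,0)$ retains a two-dimensional stable manifold. I would construct $w_+$ by shooting forward along $W^u((x_2,0,0))$: the absence of any other invariant object in the relevant region forces this orbit into $W^s((x_1,0,0))$, producing a heteroclinic connection from $(x_2,0,0)$ to $(x_1,0,0)$ whose projection covers $\T$. Its Legendrian closure is the graph of a Lipschitz forward weak KAM solution $w_+\le 0$ with $w_+(x_1)=0$ and $w_+<0$ elsewhere; the partial order from \cite{WWY2} together with the explicit branch structure of $w_+'=-V\pm\sqrt{V^2-2\lambda w_+}$ derived from $H(x,w_+',w_+)=0$ rules out any further element of $\cS_+$. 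For (iv), I would choose $V$ so that $V'$ attains the value $-\lambda$ at some $x^\ast\in\T\setminus\{x_1,x_2\}$; each such point creates an extra equilibrium $(x^\ast,-V(x^\ast),V(x^\ast)^2/(2\lambda))$ of $\Phi_t$, whose invariant manifolds provide additional Legendrian graphs and hence extra elements of $\cS_+$. The principal obstacle I anticipate lies in (iii)--(iv): verifying that the shooting from the unstable manifolds closes up globally into Lipschitz graphs over $\T$ satisfying the calibration in Definition~\ref{bwkam}, since only Lipschitz (not $C^1$) regularity is available at the corners.
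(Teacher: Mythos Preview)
Your treatment of (i) and (ii) is fine and close to the paper's: the linearisation at $(x_2,0,0)$ gives exactly the eigenvalues you list, and your explicit discounted action formulas are a legitimate way to verify staticity/strong staticity of the zero section (the paper simply cites \cite{WWY4} for (ii)).

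For (iii) your route diverges from the paper's. You propose to \emph{construct} $w_+$ by shooting forward from $W^u((x_2,0,0))$ and to argue uniqueness from the branch structure of the implicit ODE $H(x,w_+',w_+)=0$. The paper instead never constructs $w_+$ directly: existence of a second element of $\cS_+$ is deduced from (ii) together with the general fact that $\tilde{\cS}_s=\tilde{\A}$ whenever $\cS_+$ is a singleton. Uniqueness is then obtained via a local rigidity lemma (Lemma~\ref{locstab}, taken from \cite{dw}): near a saddle fixed point of the projected Mather set, the $1$-graph of any $v_+\in\cS_+$ agreeing with $u_-$ at that point must coincide with the local stable submanifold. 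Applying this at $x_2$ forces $v_+(x_2)<0$ for every $v_+\not\equiv 0$ (so $\mathcal I_{v_+}=\{x_1\}$), and applying it at $x_1$ forces any two such $v_+$'s to agree on a neighbourhood of $x_1$, whence globally by a propagation result. Your shooting picture is consistent with this, but the paper's argument cleanly separates existence from uniqueness and avoids the closure-into-a-Lipschitz-graph issue you flag.

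Item (iv) is where your proposal has a genuine gap. The extra equilibria you locate at points with $V'(x^\ast)=-\lambda$ sit at $u=V(x^\ast)^2/(2\lambda)>0$, strictly above the graph of $u_-\equiv 0$; since every $v_+\in\cS_+$ satisfies $v_+\le u_-$, no such equilibrium can lie on the $1$-graph of a forward weak KAM solution, and there is no reason their invariant manifolds should yield one either. Worse, the paper's own example ($\lambda=3$, $V=\sin$) has $\max|V'|=1<\lambda$, so your mechanism is vacuous precisely where the phenomenon occurs. The paper's argument is entirely different and PDE-based: with $\lambda=3$, $V=\sin$, one checks directly that $\varphi(x)=\cos x-1$ and $\psi(x)=-\cos x-1$ are both smooth viscosity \emph{subsolutions}, hence $T_t^+\varphi\le\varphi$ and $T_t^+\psi\le\psi$, and the limits $u_1:=\lim_{t\to\infty}T_t^+\varphi$, $u_2:=\lim_{t\to\infty}T_t^+\psi$ lie in $\cS_+$. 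Since $u_1\le\varphi$ gives $u_1(\pi)\le-2$ while $u_2\le\psi$ gives $u_2(0)\le-2$, the three solutions $0,u_1,u_2$ are pairwise distinct. You should replace your equilibrium-hunting argument by this subsolution construction.
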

By Proposition \ref{exppp}(i)(ii),  $\tilde{\A}$ contains non-chain recurrent points in the example (\ref{exhxxx}). Nevertheless, $\tilde{\mathcal{S}}_s$ is non-wandering. For Item (iii), we have a rough picture for  $\cS_{\pm}$ with $V(x)=\sin x$ (see Figure \ref{fig2}).
\begin{figure}[htbp]
\small \centering
\includegraphics[width=7cm]{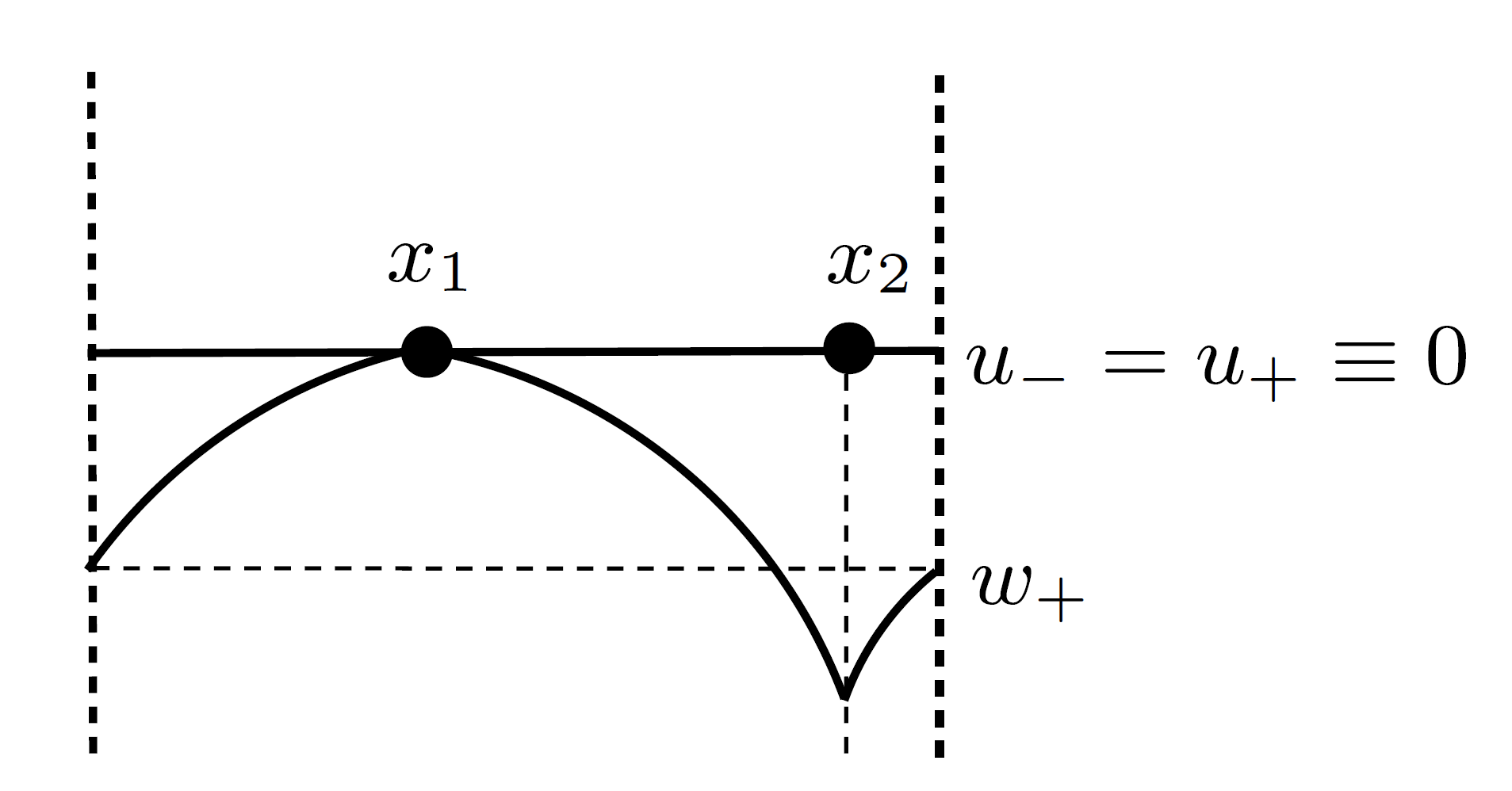}
\caption{$\cS_{\pm}$ in Item (iii)}\label{fig2}
\end{figure}


%

\subsection{Main results}\label{dissspre}
First of all, we  locate the strongly static set in a series of action minimizing invariant sets, and show its relations to the recurrence property and the non-wandering property.
\begin{theorem}[\bf Topological dynamics around the Aubry set]\label{raomeome}Let $\tilde{\mathcal{R}}$ be the set of recurrent points.  Let $\tilde{\Omega}$ be the set of non-wandering points. Then
\[\emptyset\neq\tilde{\mathcal{M}}\subseteq\tilde{\N}\cap \mathrm{cl}(\tilde{\mathcal{R}})\subseteq\tilde{\mathcal{S}}_s\subseteq \tilde{\A}\cap\tilde{\Omega}.\]
\end{theorem}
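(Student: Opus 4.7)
The plan is to establish the four inclusions
\[
\emptyset\neq\tilde{\mathcal{M}}\subseteq\tilde{\mathcal{N}}\cap\mathrm{cl}(\tilde{\mathcal{R}})\subseteq\tilde{\mathcal{S}}_s\subseteq\tilde{\mathcal{A}}\cap\tilde{\Omega}
\]
in sequence, treating the middle one as the genuinely new content. The outer bookends are quick: Proposition~\ref{88996} supplies Mather measures, yielding $\tilde{\mathcal{M}}\neq\emptyset$ and $\tilde{\mathcal{M}}\subseteq\tilde{\mathcal{N}}$ by the very definition of $\tilde{\mathcal{M}}$ via supports of $\Phi_t$-invariant probability measures on $\tilde{\mathcal{N}}$; Poincar\'e recurrence applied to each such measure $\mu$ forces $\mu$-almost every point of $\mathrm{supp}(\mu)$ to be recurrent, whence $\mathrm{supp}(\mu)\subseteq\mathrm{cl}(\tilde{\mathcal{R}})$.

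For $\tilde{\mathcal{S}}_s\subseteq\tilde{\mathcal{A}}\cap\tilde{\Omega}$, the crucial algebraic tool is the duality
\[
h_{x_0,u_0}(x_1,s)\geq u_1\;\Longleftrightarrow\; h^{x_1,u_1}(x_0,s)\leq u_0,
\]
which follows from the monotone-increasing character of the transition map $F_\gamma\colon u_0\mapsto u_\gamma(s)$ along an admissible $\gamma$ (itself a consequence of $\partial_uL\leq 0$). The ``$\leq$'' defining the strongly static supremum immediately yields the ``$\geq$'' needed for the static infimum; and the $1$-Lipschitz property of $F_\gamma$ in $u_0$ (obtained from $-\lambda\leq\partial_uL\leq 0$) upgrades the sup-attaining sequence $F_{\gamma_n}^{-1}(u(t_1))\to u(t_2)$ into the inf-attaining sequence $F_{\gamma_n}(u(t_2))\to u(t_1)$ needed for static, so strongly static $\Rightarrow$ static and $\tilde{\mathcal{S}}_s\subseteq\tilde{\mathcal{A}}$. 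Non-wanderingness follows by taking $t_1=t_2$ in the strongly static identity: the resulting closed-in-position minimizers $\gamma_n\colon x(t_1)\to x(t_1)$ of times $s_n\to\infty$ lift to Hamiltonian orbits whose initial $u$-values converge to $u(t_1)$, and compactness of action-minimizers in $\tilde{\mathcal{A}}$ pins the initial momenta down to $p(t_1)$, producing genuine orbits that enter and leave arbitrarily small neighbourhoods of $(x(t_1),p(t_1),u(t_1))$.

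The crux is $\tilde{\mathcal{N}}\cap\mathrm{cl}(\tilde{\mathcal{R}})\subseteq\tilde{\mathcal{S}}_s$. Given $X\in\tilde{\mathcal{N}}\cap\mathrm{cl}(\tilde{\mathcal{R}})$ with semi-static orbit $\Phi_tX=(x(t),p(t),u(t))$, I would argue the orbit itself is strongly static. Choose recurrent approximants $Y_n\to X$ with return times $T_n\to\infty$ so that $\Phi_{T_n}Y_n\to X$; continuity of $\Phi_{t_1}$ then gives $\Phi_{T_n+t_1}Y_n\to\Phi_{t_1}X=(x(t_1),p(t_1),u(t_1))$ for each fixed $t_1$. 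Fix $t_1,t_2$. The required bound $h_{x(t_2),u(t_2)}(x(t_1),s)\geq u(t_1)$ for all $s>0$ (equivalent by duality to $h^{x(t_1),u(t_1)}(x(t_2),s)\leq u(t_2)$) is obtained by applying the semi-static identity of the approximant's orbit at the pair $(t_2,T_n+t_1)$ and passing to the limit $n\to\infty$ using continuity of the action $h_\cdot$ in all its arguments. The sup-attainment is produced by concatenating a short arc from $x(t_2)$ to $x_{Y_n}(t_2)$, the orbit piece of $Y_n$ from time $t_2$ to time $T_n+t_1$, and a short arc from $x_{Y_n}(T_n+t_1)$ to $x(t_1)$; the resulting curves $\gamma_n\colon x(t_2)\to x(t_1)$ of times $s_n\to\infty$ satisfy $F_{\gamma_n}^{-1}(u(t_1))\to u(t_2)$.

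The principal obstacle is that the recurrent approximants $Y_n$ need not themselves lie in $\tilde{\mathcal{N}}$, so the semi-static identity invoked in the lower-bound step is not immediately available along the orbit of $Y_n$. Resolving this requires either a density statement showing that $\tilde{\mathcal{N}}\cap\tilde{\mathcal{R}}$ is sufficiently dense inside $\tilde{\mathcal{N}}\cap\mathrm{cl}(\tilde{\mathcal{R}})$ (exploiting closedness of $\tilde{\mathcal{N}}$ and the structural theorems on semi-static orbits from \cite{WWY4}), or a more delicate pseudo-orbit tracking argument that transfers the semi-static identity from the orbit of $X$ to its near-returns via $Y_n$ over windows of length $T_n$. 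Either route relies on (H3) to provide the contractive $u$-dynamics that controls accumulated errors uniformly in $n$.
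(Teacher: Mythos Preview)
Your outer steps ($\tilde{\mathcal{M}}\neq\emptyset$, $\tilde{\mathcal{M}}\subseteq\tilde{\mathcal{N}}\cap\mathrm{cl}(\tilde{\mathcal{R}})$, and $\tilde{\mathcal{S}}_s\subseteq\tilde{\mathcal{A}}$) are fine and match the paper.

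For the middle inclusion $\tilde{\mathcal{N}}\cap\mathrm{cl}(\tilde{\mathcal{R}})\subseteq\tilde{\mathcal{S}}_s$, the obstacle you flag is genuine for your strategy, but the paper sidesteps it completely by a simple reordering. Rather than starting from $X\in\tilde{\mathcal{N}}\cap\mathrm{cl}(\tilde{\mathcal{R}})$ and approximating by external recurrent points $Y_n$, the paper first takes $X\in\tilde{\mathcal{N}}\cap\tilde{\mathcal{R}}$ (no closure) and uses recurrence of $X$ \emph{along its own semi-static orbit}. Concretely, if $(x(t_1),p(t_1),u(t_1))$ is itself recurrent, pick $\tau_n\to-\infty$ with $\Phi_{\tau_n}(x(t_1),p(t_1),u(t_1))\to(x(t_1),p(t_1),u(t_1))$. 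For $\tau_n<t_2-t_1$ one has $t_1+\tau_n<t_2$, so the semi-static identity of \emph{this very orbit} at the ordered pair $(t_1+\tau_n,t_2)$ gives $u(t_1+\tau_n)=\sup_{s>0}h^{x(t_2),u(t_2)}(x(t_1+\tau_n),s)$. Passing to the limit requires only the Lipschitz continuity of the Ma\~n\'e potential $x\mapsto\sup_{s>0}h^{x(t_2),u(t_2)}(x,s)$ away from $x(t_2)$ (Proposition~\ref{immm}), not any contraction in $u$. This yields $\tilde{\mathcal{N}}\cap\tilde{\mathcal{R}}\subseteq\tilde{\mathcal{S}}_s$, and then the closedness of $\tilde{\mathcal{S}}_s$ finishes the job. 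No density of $\tilde{\mathcal{N}}\cap\tilde{\mathcal{R}}$ in $\tilde{\mathcal{N}}\cap\mathrm{cl}(\tilde{\mathcal{R}})$, and no pseudo-orbit tracking, is needed.

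For $\tilde{\mathcal{S}}_s\subseteq\tilde{\Omega}$, your sketch is morally right but the step where ``compactness of action-minimizers in $\tilde{\mathcal{A}}$ pins the initial momenta down to $p(t_1)$'' hides the actual difficulty. Compactness alone only gives convergence of $(p_n(0),p_n(s_n))$ to \emph{some} limits $(p',p'')$; one must then argue $p'=p''=p(t_1)$. The paper handles this via a graph property (Proposition~\ref{lem3.2x}): if $(x,p_0,u)\in\tilde{\mathcal{V}}$ and $(x,p_\pm,u)\in\tilde{\mathcal{N}}^\pm$ then $p_0=p_\pm$. The full argument is packaged as the transitive criterion (Lemma~\ref{CONC1}), which, applied with $X_1=X_2$, gives non-wandering once one knows $\lim_{t\to\infty}h_{x_0,u_0}(x_0,t)=u_0$ and $\limsup_{t\to\infty}h^{x_0,u_0}(x_0,t)=u_0$ along a strongly static curve; these identities are exactly Lemma~\ref{lem3.1qq}.
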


The closure $\text{cl}(\tilde{\mathcal{R}})$ is called the Birkhoff center. The fact $\tilde{\mathcal{M}}\subseteq\tilde{\N}\cap \mathrm{cl}(\tilde{\mathcal{R}})$ follows easily from the Poincar\'{e} recurrence theorem. To prove $\tilde{\N}\cap \mathrm{cl}(\tilde{\mathcal{R}})\subseteq\tilde{\mathcal{S}}_s$, we need to establish the Lipschitz continuity of {the Ma\~{n}\'{e} } potentials, whose proof is postponed to \ref{apB}. The inclusion $\tilde{\mathcal{S}}_s\subseteq \tilde{\A}\cap\tilde{\Omega}$ follows from a technical lemma on transitive criterion (see Lemma \ref{CONC1} below).

\vspace{1em}

In order to show the differences of the dynamics between the classical cases and the contact cases,  we enhance the assumption (H3) by
\begin{itemize}
	\item [\textbf{(H3')}] {\it Strictly increasing}: there is a constant $\lambda>0$ such that for every $(x,p,u)\in T^{\ast}M\times\R$,
		\begin{equation*}
		0< \frac{\partial H}{\partial u}(x,p,u)\leq \lambda,
		\end{equation*}
\end{itemize}
Under (H1), (H2), (H3') and (A), it is well known that the set of backward weak KAM solutions $\cS_-$ consists of only one element. {Consequently, the Ma\~{n}\'{e} set coincides with the Aubry set (see \cite[Remark 2]{WWY4}).} However, the structure of the set of forward weak KAM solutions $\cS_+$ may be rather complicated. {That causes significant differences  between the Aubry set and the strongly static set, as it is shown in Proposition \ref{exppp}.}


In order to deal with the other elements in $\mathcal S_+$ except the maximal one.
We define a partial ordering  in $\cS_+$:

 \centerline{$v_1\preceq v_2$ if and only if $v_1(x)\leq v_2(x)$ for all $x\in M$.}

 \vspace{1ex}
Moreover, we define $\mathcal{Z}_{\max}$ a maximal totally ordered subset of $\cS_+$. Namely, for any $w_+\in \cS_+\backslash \mathcal{Z}_{\max}$, there exist $v_+\in \mathcal{Z}_{\max}$ and $x_1,x_2\in M$ such that $w_+(x_2)>v_+(x_2)$ and $w_+(x_2)<v_+(x_2)$.
We will show the existence of minimal elements in $\cS_+$ in this sense of partial ordering. Moreover, we will provide a representation for the minimal element in $\mathcal{Z}_{\max}$.

\begin{theorem}[\bf Minimal forward weak KAM solutions]\label{pooo}
\
\begin{itemize}
\item [(1)]
The  partially ordered set $(\mathcal{S}_+,\preceq)$ has minimal elements.
\item [(2)] For each $\mathcal{Z}_{\max}\subseteq \mathcal{S}_+$, there exists $x_0\in \mathcal{M}$ depending on $\mathcal{Z}_{\max}$, such that the minimal element $u^*$ in $\mathcal{Z}_{\max}$ can be represented in the following two manners:
\[u^*(x)=\inf_{\substack{v_+(x_0)=u_-(x_0)\\  v_+\in \cS_+ } }v_+(x)=\limsup_{t\to +\infty}h^{x_0,u_-(x_0)}(x,t),\]
where  $\mathcal{M}$ denotes the projected Mather set and $h^{\cdot,\cdot}(\cdot,\cdot): M\times\R\times M\times\R_+\to \R$  is the action function defined by (\ref{2-3}).
\end{itemize}
\end{theorem}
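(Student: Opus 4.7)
The plan for Part (1) is to verify the hypothesis of Zorn's lemma for $(\mathcal{S}_+,\preceq)$: it suffices to produce a lower bound in $\mathcal{S}_+$ for every totally ordered chain $\mathcal{C}\subseteq \mathcal{S}_+$. Given such $\mathcal{C}$, I would set $v(x):=\inf_{v_+\in\mathcal{C}} v_+(x)$ and show $v\in\mathcal{S}_+$. Under (H1)--(H3') and (A), standard gradient bounds for (\ref{hj}) give a uniform Lipschitz constant for all elements of $\mathcal{S}_+$, so $v$ is Lipschitz and one can extract a pointwise decreasing sequence $v_n\in\mathcal{C}$ with $v_n\downarrow v$ uniformly on $M$. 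Condition (i) of Definition \ref{bwkam} for $v$ is then inherited from its counterparts for $v_n$ by uniform convergence together with continuity of $L$ in the $u$-argument. For the forward calibration (\ref{cali2}) at a prescribed $x\in M$, I would select calibrated curves $\gamma_n:[0,+\infty)\to M$ for each $v_n$ with $\gamma_n(0)=x$; equi-Lipschitz bounds together with Arzel\`a--Ascoli and a diagonal argument over $T\to+\infty$ yield a subsequential uniform limit $\gamma$ on every compact interval, and passing to the limit in (\ref{cali2}) written for $v_n$ produces a calibrated curve for $v$. The monotonicity $v_n\downarrow v$ is essential here and is precisely what the unrestricted infimum over $\mathcal{S}_+$ in (\ref{conterex}) lacks.

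For Part (2), set $u^*:=\inf\mathcal{Z}_{\max}$; Part (1) together with maximality of $\mathcal{Z}_{\max}$ gives $u^*\in\mathcal{Z}_{\max}$. The argument splits into three steps. (A) \emph{Locating $x_0\in\mathcal{M}$ with $u^*(x_0)=u_-(x_0)$.} Under (H3'), $\mathcal{S}_-=\{u_-\}$ and $u^*\leq u_-$ everywhere. Assuming for contradiction that $u_->u^*$ throughout $\mathcal{M}$, I would choose a Mather measure $\tilde{\mu}$ (whose existence comes from Proposition \ref{88996} and $\tilde{\mathcal{M}}\neq\emptyset$ in Theorem \ref{raomeome}), integrate the calibration and domination identities for $u_-$ and $u^*$ against $\tilde{\mu}$ along orbits in $\mathrm{supp}(\tilde{\mu})\subseteq\tilde{\mathcal{A}}$, and invoke the strict positivity $\partial_u H>0$ from (H3') to contradict the $\Phi_t$-invariance of $\tilde{\mu}$. (B) \emph{Limsup representation.} Define $w(x):=\limsup_{t\to+\infty}h^{x_0,u_-(x_0)}(x,t)$. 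A Gr\"onwall-type comparison along a minimizer of $h^{x_0,u_-(x_0)}(x,t)$, combined with $u^*(x_0)=u_-(x_0)$ and the domination of $u^*$, gives $h^{x_0,u_-(x_0)}(x,t)\leq u^*(x)$ for every $t>0$, hence $w\leq u^*$. A stability argument analogous to Part (1), using equi-Lipschitz estimates for $h^{x_0,u_-(x_0)}(\cdot,t)$ in $t$, shows $w\in\mathcal{S}_+$ with $w(x_0)=u_-(x_0)$; since $w\leq u^*\leq z$ for every $z\in\mathcal{Z}_{\max}$, $w$ is comparable with every element of $\mathcal{Z}_{\max}$, so maximality forces $w\in\mathcal{Z}_{\max}$, and minimality of $u^*$ yields $w=u^*$. (C) \emph{Infimum representation.} For any $v_+\in\mathcal{S}_+$ with $v_+(x_0)=u_-(x_0)$, the same Gr\"onwall comparison gives $v_+(x)\geq h^{x_0,u_-(x_0)}(x,t)$ for every $t>0$; taking $\limsup$ yields $v_+\geq w=u^*$, and since $u^*$ itself meets the constraint, the infimum equals $u^*$.

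The main obstacle is Step (A), i.e.\ locating a contact point $x_0$ inside the projected Mather set rather than merely in $\mathcal{A}$. The standard argument only guarantees a non-empty contact set in $\mathcal{A}$; refining to $\mathcal{M}$ is exactly where the strict monotonicity (H3') and the $\Phi_t$-invariance of Mather measures become indispensable, and one must rule out a drift of $u_--u^*$ along orbits in $\mathrm{supp}(\tilde{\mu})$ using the sign of $\partial_u H$. A secondary technical point is the closure arguments in Part (1) and in Step (B): because $L$ depends on $u$, the classical stability of viscosity solutions under pointwise limits does not apply directly, so the forward weak KAM property has to be verified variationally along a monotone sequence, with the monotonicity compensating for the nonlinearity in the $u$-variable.
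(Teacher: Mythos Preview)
Your Part (1) is essentially the paper's argument: Zorn's lemma, with the infimum of a chain shown to belong to $\mathcal{S}_+$. The paper verifies membership via the semigroup characterization $T_t^+v=v$ together with the continuity bound $\|T_t^+\varphi-T_t^+\psi\|_\infty\le e^{\lambda t}\|\varphi-\psi\|_\infty$, whereas you verify the weak KAM conditions directly via Arzel\`a--Ascoli on calibrated curves; both routes are valid.

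In Part (2), however, there is a genuine gap in Step (B). You claim that $w(x):=\limsup_{t\to+\infty}h^{x_0,u_-(x_0)}(x,t)$ satisfies $w(x_0)=u_-(x_0)$, but you never justify this. The inequality $w(x_0)\le u^*(x_0)=u_-(x_0)$ follows from your domination argument, but the reverse inequality
\[
\limsup_{t\to+\infty}h^{x_0,u_-(x_0)}(x_0,t)\ \ge\ u_-(x_0)
\]
is exactly the statement that the semi-static curve through $(x_0,u_-(x_0))$ is \emph{strongly static} (see Lemma \ref{lem3.1qq}(2)). Merely having $x_0\in\mathcal{M}$ with $u^*(x_0)=u_-(x_0)$ does not guarantee this: $\tilde{\mathcal{M}}\subseteq\tilde{\mathcal{S}}_s=\mathrm{cl}(\tilde{\mathcal{V}})$, and points in the closure need not lie on strongly static curves. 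The paper's Remark \ref{anottr} makes this failure mode explicit, exhibiting $y_0$ with $m(y_0,y_0)<u_-(y_0)$. Without $w(x_0)=u_-(x_0)$, $w$ need not satisfy the constraint in the infimum representation, and the maximality/minimality sandwich $w\le u^*$ plus $w\in\mathcal{Z}_{\max}$ collapses.

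The paper repairs this by choosing $x_0$ more carefully: inside $\tilde{\mathcal{M}}_{u^*}:=\tilde{\mathcal{M}}\cap G_{u^*}$ (nonempty because $\tilde{\mathcal{N}}_{u^*}$ is a nonempty compact invariant set and hence carries a Mather measure) one picks a \emph{recurrent} point, which by the argument of Section \ref{recstr} lies in $\tilde{\mathcal{V}}$. Thus $x_0\in\mathcal{M}\cap\mathcal{V}\cap\mathcal{I}_{u^*}$, and Lemma \ref{reminx} then yields both $w(x_0)=u_-(x_0)$ and the infimum representation simultaneously. You have misidentified the main obstacle: Step (A) as you state it is in fact routine (no contradiction argument with $\partial_uH>0$ is needed), while the truly delicate point is arranging $x_0\in\mathcal{V}$, which is precisely where the strongly static set enters.
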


See Remark \ref{anottr} below for a discussion on the choice of $x_0$, from which one can see that the strongly static set plays an essential role.
{By analysing more in details the structure of  $\cS_+$,}  one has
\begin{theorem}[\bf Existence of transitive orbits]\label{CONC}
Given $X_1:=(x_1,p_1,u_1)\in \tilde{\A}$, $X_2:=(x_2,p_2,u_2)\in \tilde{\cS}_s$, if for each $v_+\in \cS_+$, $v_+(x_2)=u_-(x_2)$ implies $v_+(x_1)=u_-(x_1)$, then {$X_1\rightsquigarrow X_2$. }
\end{theorem}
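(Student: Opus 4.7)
The plan is to produce approximating contact orbits from $x_1$ to $x_2$ by using the time-dependent action function $h^{x_2,u_2}(\cdot,\cdot)$ from (\ref{2-3}) and then to show, via a graph-property argument, that their endpoints in phase space converge to $X_1$ and $X_2$. The crucial bridge between the variational data and the hypothesis on weak KAM solutions will be the function
\[
u^{*}(x):=\limsup_{t\to +\infty}h^{x_{2},u_{2}}(x,t),
\]
which is the same object appearing in the representation of minimal forward weak KAM solutions in Theorem \ref{pooo}(2).

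First I would reduce to the case in which $X_{2}$ itself lies on a genuine strongly static orbit, not merely in its closure; by continuity of the contact flow $\Phi_{t}$, approximating $X_{2}$ within a shrinking neighborhood suffices. Under this reduction, the defining identity $u_{2}=\sup_{s>0}h^{x_{2},u_{2}}(x_{2},s)$ together with the inclusion $\tilde{\mathcal{S}}_{s}\subseteq\tilde{\mathcal{A}}$ from Theorem \ref{raomeome} yields $u_{2}=u_{-}(x_{2})$ and $u^{*}(x_{2})=u_{-}(x_{2})$. Adapting the argument used for Theorem \ref{pooo}(2), $u^{*}$ belongs to $\mathcal{S}_{+}$. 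Applying the hypothesis of the theorem to this particular $u^{*}$ gives $u^{*}(x_{1})=u_{-}(x_{1})=u_{1}$, hence a sequence $t_{n}\to+\infty$ with $h^{x_{2},u_{2}}(x_{1},t_{n})\to u_{1}$.

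For each such $t_{n}$, the analogue of Proposition \ref{IVP} for $h^{\cdot,\cdot}$ yields a minimizing Lipschitz curve $\gamma_{n}:[0,t_{n}]\to M$ with $\gamma_{n}(0)=x_{1}$, $\gamma_{n}(t_{n})=x_{2}$ realizing the infimum in (\ref{2-3}). Setting $u_{n}(s):=h^{x_{2},u_{2}}(\gamma_{n}(s),t_{n}-s)$ and $p_{n}(s):=\frac{\partial L}{\partial\dot{x}}(\gamma_{n}(s),\dot{\gamma}_{n}(s),u_{n}(s))$ produces a genuine solution of (\ref{c}) with $u_{n}(t_{n})=u_{2}$ and $u_{n}(0)\to u_{1}$. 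To conclude, one needs the phase-space convergences
\[
(\gamma_{n}(0),p_{n}(0),u_{n}(0))\to X_{1},\qquad (\gamma_{n}(t_{n}),p_{n}(t_{n}),u_{n}(t_{n}))\to X_{2},
\]
and since the positions and $u$-coordinates already converge, only the momenta remain.

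This momentum step is where I expect the main difficulty. The plan is to extract, after passing to a subsequence, limits of the initial arc $\gamma_{n}|_{[0,T]}$ and of the terminal arc $\gamma_{n}|_{[t_{n}-T,t_{n}]}$ for each fixed $T>0$, identify the limits as, respectively, a negatively calibrated curve through $X_{1}$ and a positively calibrated (in fact strongly static) curve ending at $X_{2}$, and then invoke the graph property of the Aubry set from \cite{WWY4} to force $p_{n}(0)\to p_{1}$ and $p_{n}(t_{n})\to p_{2}$. The technical content here is of the same flavor as the transitive criterion of Lemma \ref{CONC1}, and I expect to either reuse that lemma directly or adapt its proof scheme. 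Once these convergences are in hand, for any prescribed neighborhoods $U_{1}$ of $X_{1}$ and $U_{2}$ of $X_{2}$ in $T^{*}M\times\mathbb{R}$, sufficiently large $n$ produces an orbit of (\ref{c}) lying in $U_{1}$ at time $0$ and passing through $U_{2}$ at time $t_{n}$, which is precisely the desired transitive orbit $X_{1}\rightsquigarrow X_{2}$.
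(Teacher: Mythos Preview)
Your proposal is correct and follows essentially the same route as the paper. The paper's proof reduces to $X_{2}\in\tilde{\mathcal{V}}$, shows (Lemmas \ref{hyxuxttttt} and \ref{reminx}) that $m(\cdot,x_{2}):=\limsup_{t\to+\infty}h^{x_{2},u_{-}(x_{2})}(\cdot,t)$ is a forward weak KAM solution with $m(x_{2},x_{2})=u_{-}(x_{2})$, applies the hypothesis to get $\limsup_{t\to+\infty}h^{x_{2},u_{2}}(x_{1},t)=u_{1}$, notes the elementary fact $\lim_{t\to+\infty}h_{x_{1},u_{1}}(x_{2},t)=u_{-}(x_{2})=u_{2}$ valid under (H3'), and then invokes Lemma \ref{CONC1} directly; you reproduce exactly this skeleton, and the orbit construction and momentum-convergence argument you sketch is precisely the content of Lemma \ref{CONC1}, which you correctly flag as reusable. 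One small remark: in your momentum step the roles are swapped---the limit of the initial arc through $X_{1}$ is \emph{positively} semi-static and the limit of the terminal arc at $X_{2}$ is \emph{negatively} semi-static (compare the proof of Lemma \ref{CONCLL1} and Proposition \ref{lem3.2x}), and you should not omit checking the forward half $\lim_{t\to+\infty}h_{x_{1},u_{1}}(x_{2},t)=u_{2}$ of $(\diamondsuit)$, since it is what drives the $p'_{2}=p_{2}$ part of Lemma \ref{CONC1}.
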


 { If the forward weak KAM solution is unique,  $\tilde{\A}=\tilde{\cS}_s$ (see \cite[Proposition 10]{WWY4}). In this case, the projected strongly static set can be characterized as follows
 \[\cS_s=\{x\in M\ |\ u_-(x)=u_+(x)\},\]
 where $u_-$ (resp. $u_+$) denotes the unique backward (resp. forward) weak KAM solution. From Theorem \ref{CONC}, we have the following corollary.
\begin{corollary}\label{CONCtt}
 Given any two points $X_1,X_2\in\tilde{\A}$,  if the forward weak KAM solution is unique, then {$X_1\rightsquigarrow X_2$. }
\end{corollary}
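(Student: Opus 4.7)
The plan is to deduce Corollary \ref{CONCtt} directly from Theorem \ref{CONC} by exploiting the two collapses that uniqueness of the forward weak KAM solution forces on the hypothesis of that theorem. Concretely, I would first record the two structural facts stated just before the corollary: when $\cS_+$ is a singleton $\{u_+\}$, one has $\tilde{\A}=\tilde{\cS}_s$ (via \cite[Proposition 10]{WWY4}), and the projected strongly static set is exactly the coincidence locus $\cS_s=\{x\in M\ |\ u_-(x)=u_+(x)\}=\A$.

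Next, I would verify that the hypotheses of Theorem \ref{CONC} are satisfied for the pair $X_1,X_2$. Since $\tilde{\A}=\tilde{\cS}_s$, the point $X_2\in\tilde{\A}$ automatically lies in $\tilde{\cS}_s$, which is the required ambient set for the target point in Theorem \ref{CONC}. For the implication hypothesis, the set $\cS_+=\{u_+\}$ contains only one candidate $v_+$, so I only need to check that $u_+(x_2)=u_-(x_2)\Rightarrow u_+(x_1)=u_-(x_1)$. Writing $x_i:=\pi^*(X_i)\in\A$ for $i=1,2$, and using the characterization $\A=\{x\in M\ |\ u_-(x)=u_+(x)\}$, both equalities $u_+(x_1)=u_-(x_1)$ and $u_+(x_2)=u_-(x_2)$ hold unconditionally, so the implication is vacuously satisfied.

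With these verifications in hand, the final step is a direct application of Theorem \ref{CONC}, yielding $X_1\rightsquigarrow X_2$. There is no genuine obstacle here: the corollary is essentially a dictionary entry translating Theorem \ref{CONC} into the setting where $\cS_+$ is a singleton, and the only care needed is to cite the correct equivalent description of $\A$ and the identity $\tilde{\A}=\tilde{\cS}_s$ under the uniqueness assumption. If anything, the most delicate point is to note that the arbitrariness of $X_1,X_2\in\tilde{\A}$ gives the conclusion symmetrically, so that the relation $\rightsquigarrow$ in fact becomes total on $\tilde{\A}$, although the statement itself only asserts the one-sided relation for each ordered pair.
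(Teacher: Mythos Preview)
Your proposal is correct and follows exactly the route the paper intends: the corollary is stated as an immediate consequence of Theorem \ref{CONC}, and you have simply spelled out the verification that $X_2\in\tilde{\cS}_s$ (since $\tilde{\A}=\tilde{\cS}_s$) and that the implication hypothesis is trivially satisfied because $x_1,x_2\in\A=\{x\in M\mid u_-(x)=u_+(x)\}$. One minor wording quibble: the implication is not ``vacuously'' satisfied (the antecedent $u_+(x_2)=u_-(x_2)$ is actually true), but rather trivially satisfied because the consequent is also true; the mathematics is unaffected.
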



Figure \ref{fig} provides a rough picture for the dynamics  around $\tilde{\A}$ (projected to $M\times\R$) under the assumption of Theorem \ref{CONC}, where
\begin{itemize}
\item
$\Gamma_t$ denotes a transitive orbit from $X_1$ to $X_2$;
\item $\Gamma_n^1$, $\Gamma_n^2$ denote  non-wandering orbits to $X_1$ and $X_2$ respectively.
\end{itemize}

\begin{figure}[htbp]
\small \centering
\includegraphics[width=10cm]{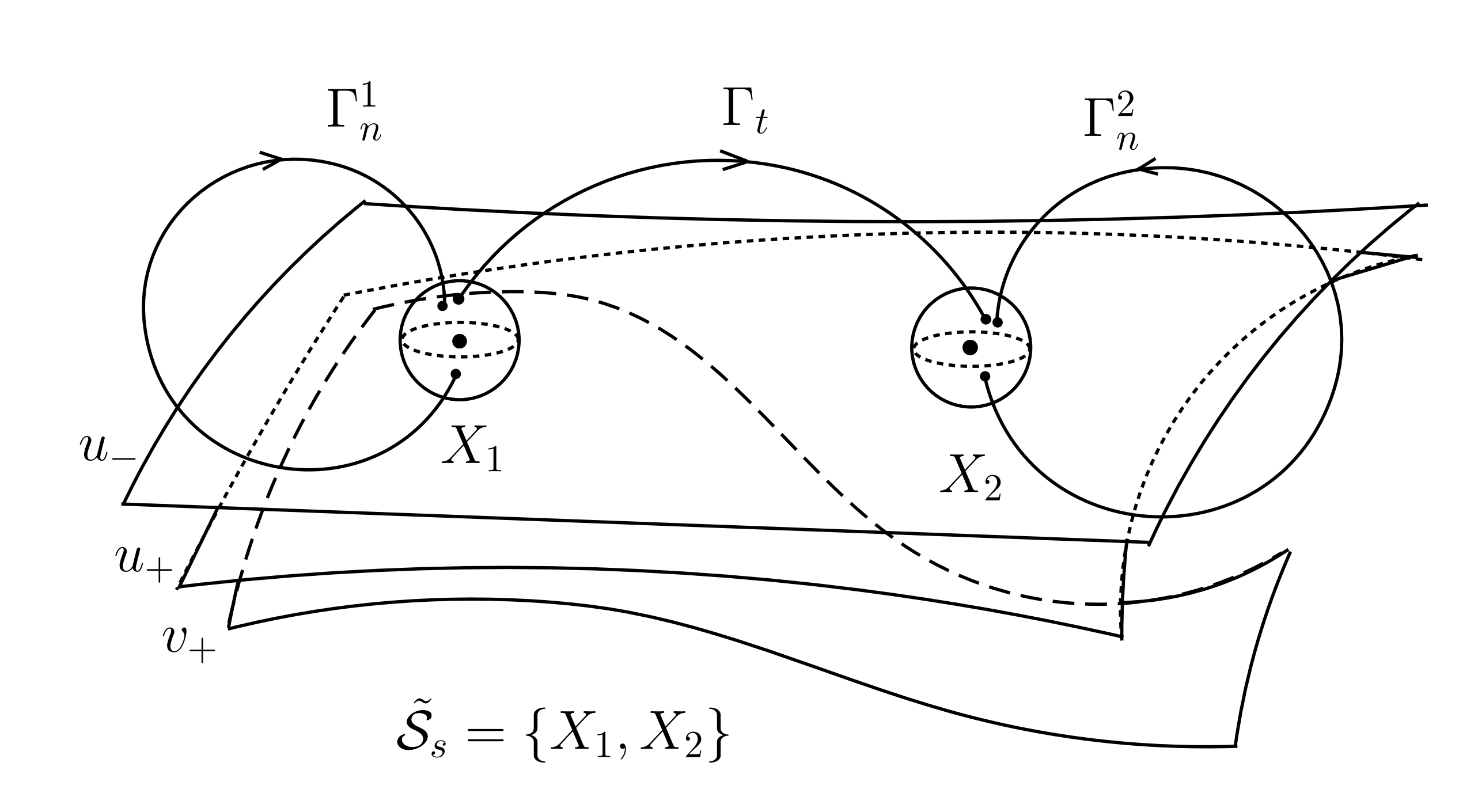}
\caption{The dynamics around the Aubry set}\label{fig}
\end{figure}

\vspace{1em}

The rest of this paper is organized as follows.  In Section \ref{apA}, we recall some useful facts, which mainly come from  \cite{WWY2,WWY4}.  In Section \ref{cll}, we prove a technical lemma on the existence of certain transitive orbit, from which we obtain the topological dynamics around the Aubry set in Section \ref{se44}. In Section  \ref{se55}, we provide more detailed information on the set of forward weak KAM solutions in the strictly increasing cases.  Some auxiliary results are proved in \ref{apC}. Finally,  we  provide a proof of Proposition \ref{exppp} in \ref{proofexppp}.

\vspace{1ex}

\section{Preliminaries}\label{apA}

Different from classical cases,  all of technical tools for contact Hamiltonian systems were formed in an implicit manner. The main reason to use the implicit form is to get rid of the constraints caused by the $u$-argument. Besides, it is worth mentioning that  an alternative variational formulation  was provided in \cite{CCJWY,LTW}  in light of G. Herglotz's work  \cite{her}.

In the following, we collect some facts used in this paper. All of these  results hold under (H1), (H2) and $|\partial_uH|\leq \lambda$.
\subsection{Action functions and minimizing curves}\label{A.2}


Let us collect some properties of the action functions $h_{x_0,u_0}(x,t)$ and $h^{x_0,u_0}(x,t)$ in the following propositions. See \cite[Theorems C, D]{WWY} and \cite[Theorem 3.1, Propositions 3.1-3.4]{WWY1} for more details.
\begin{proposition}\label{pri}\
\begin{itemize}
		\item [(1)] (Monotonicity).
		Given $x_0\in M$, $u_0$, $u_1$, $u_2\in\mathbb{R}$,
			if $u_1<u_2$, then $h_{x_0,u_1}(x,t)<h_{x_0,u_2}(x,t)$, for all $(x,t)\in M\times (0,+\infty)$;
		\item [(2)] (Minimality).
		Given $x_0$, $x\in M$, $u_0\in\mathbb{R}$ and $t>0$, let
		$S^{x,t}_{x_0,u_0}$ be the set of the solutions $(x(s),p(s),u(s))$ of (\ref{c}) on $[0,t]$ with $x(0)=x_0$, $x(t)=x$, $u(0)=u_0$.
		Then
		\[
		h_{x_0,u_0}(x,t)=\inf\{u(t)\ |\ (x(s),p(s),u(s))\in S^{x,t}_{x_0,u_0}\}, \quad \forall (x,t)\in M\times(0,+\infty).
		\]
		\item [(3)] (Lipschitz continuity).
		The function $(x_0,u_0,x,t)\mapsto h_{x_0,u_0}(x,t)$ is locally Lipschitz  continuous on $M\times\R\times M\times(0,+\infty)$.
		\item [(4)] (Markov property).
		Given $x_0\in M$, $u_0\in\mathbb{R}$,
		\[
		h_{x_0,u_0}(x,t+s)=\inf_{y\in M}h_{y,h_{x_0,u_0}(y,t)}(x,s)
		\]
		for all  $s$, $t>0$ and all $x\in M$. Moreover, the infimum is attained at $y$ if and only if there exists a minimizer $\gamma$ of $h_{x_0,u_0}(x,t+s)$ with $\gamma(t)=y$.
		\item [(5)] (Reversibility).
		Given $x_0$, $x\in M$ and $t>0$, for each $u\in \mathbb{R}$, there exists a unique $u_0\in \mathbb{R}$ such that
		\[
		h_{x_0,u_0}(x,t)=u.
		\]
	\end{itemize}
	\end{proposition}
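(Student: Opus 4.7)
The plan is to derive all five items from the variational principle (Proposition \ref{IVP}) and the uniform bound $|\partial_u L|\leq\lambda$ from (L3), via a single Gronwall-type comparison applied along fixed competitor curves. Since the defining relation (\ref{baacf1}) for $h_{x_0,u_0}$ is implicit, the key auxiliary lemma I would first establish is that if $u(\cdot)$, $v(\cdot)$ satisfy $u(t)=u_0+\int_0^t L(\gamma,\dot\gamma,u(s))\,ds$ and $v(t)=v_0+\int_0^t L(\gamma,\dot\gamma,v(s))\,ds$ along a common Lipschitz curve $\gamma:[0,t]\to M$, then $u-v$ keeps its sign and $|u(s)-v(s)|\leq |u_0-v_0|e^{\lambda s}$. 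This one-line Gronwall argument is the engine for (1), (5), and the $u_0$-dependence in (3).

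For Monotonicity (1), I would pick a minimizer $\gamma_2$ of $h_{x_0,u_2}(x,t)$ supplied by Proposition \ref{IVP} and let $v(s)$ solve the implicit equation along $\gamma_2$ with initial value $u_1$. Using $\gamma_2$ as a competitor in (\ref{baacf1}) for $u_1$ yields $h_{x_0,u_1}(x,t)\leq v(t)$, and the Gronwall lemma gives $h_{x_0,u_2}(x,t)-v(t)\geq (u_2-u_1)e^{-\lambda t}>0$. For Reversibility (5), the map $F(u_0):=h_{x_0,u_0}(x,t)$ is strictly increasing by (1) and continuous by (3). To show $F$ is onto $\R$, I would test with a geodesic from $x_0$ to $x$ of speed $\mathrm{dist}(x_0,x)/t$ to produce a two-sided bound $|F(u_0)-u_0|\leq C(x_0,x,t)$ that is uniform in $u_0$ on bounded sets, forcing $F(u_0)\to\pm\infty$ as $u_0\to\pm\infty$.

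For Minimality (2), one direction is immediate from Proposition \ref{IVP}: the minimizer $\gamma$ in (\ref{baacf1}) lifts to a solution $(x(\cdot),p(\cdot),u(\cdot))\in S^{x,t}_{x_0,u_0}$ with $u(t)=h_{x_0,u_0}(x,t)$. Conversely, for any $(x(\cdot),p(\cdot),u(\cdot))\in S^{x,t}_{x_0,u_0}$, the third equation of (\ref{c}) together with the Legendre identity $L(x,\dot x,u)=\langle\dot x,p\rangle-H(x,p,u)$ gives $u(t)=u_0+\int_0^t L(x(s),\dot x(s),u(s))\,ds$, so $x(\cdot)$ is admissible in (\ref{baacf1}) and hence $h_{x_0,u_0}(x,t)\leq u(t)$. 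For the Markov property (4), concatenating admissible curves over $[0,t]$ and $[t,t+s]$ immediately yields $h_{x_0,u_0}(x,t+s)\leq h_{y,h_{x_0,u_0}(y,t)}(x,s)$ for each $y$; splitting a minimizer of $h_{x_0,u_0}(x,t+s)$ at time $t$ and setting $y:=\gamma(t)$ gives the reverse, with the attainment characterization coming from the uniqueness part of (5) applied to the intermediate $u$-value.

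For Lipschitz continuity (3), I would first combine superlinearity (L2) with the Gronwall estimate above, applied to any crude geodesic-based a priori bound on $u(\cdot)$ along a minimizer, to obtain a uniform velocity bound for minimizers on any compact subset of $M\times\R\times M\times[\delta,T]$, $\delta>0$. Lipschitz dependence in $(x_0,x)$ then follows by perturbing the endpoints of a minimizer by short geodesic segments and reabsorbing the error using the local Lipschitz constants of $L$; dependence in $t$ uses similar endpoint perturbations together with the velocity bound; dependence in $u_0$ is immediate from the Gronwall lemma. The main obstacle I expect is the uniform velocity bound for minimizers near $t=0^+$: because the Lagrangian depends on $u$ and $u$ depends on the minimizer, the standard Tonelli compactness argument has to be coupled with an a priori bound on $u$ along the minimizer, and threading those two estimates together uniformly as $t\downarrow 0$ is what requires real care.
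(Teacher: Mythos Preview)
The paper does not prove Proposition~\ref{pri}: the sentence preceding it reads ``See \cite[Theorems C, D]{WWY} and \cite[Theorem 3.1, Propositions 3.1--3.4]{WWY1} for more details,'' and no argument is given. So there is nothing in the paper to compare your sketch against; your outline is in the spirit of how these facts are established in the cited references. That said, there is one recurring gap you should not gloss over.

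Your auxiliary Gronwall lemma compares two \emph{solutions} of the Carath\'eodory equation $\dot u=L(\gamma,\dot\gamma,u)$ along a fixed curve, and you then infer inequalities for $h_{x_0,u_0}$ by ``using $\gamma$ as a competitor in (\ref{baacf1})''. But (\ref{baacf1}) is an \emph{implicit} formula: plugging any competitor $\gamma$ into it yields
\[
h_{x_0,u_0}(x,t)\ \leq\ u_0+\int_0^t L\bigl(\gamma(\tau),\dot\gamma(\tau),\,h_{x_0,u_0}(\gamma(\tau),\tau)\bigr)\,d\tau,
\]
with the value function itself in the integrand, not the Carath\'eodory solution along $\gamma$. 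Thus your steps ``$h_{x_0,u_1}(x,t)\leq v(t)$'' in (1) and ``$h_{x_0,u_0}(x,t)\leq u(t)$'' in (2) do not follow directly from (\ref{baacf1}) as stated: what you actually get is that $s\mapsto h_{x_0,u_0}(\gamma(s),s)$ is only a \emph{sub}-solution of the Carath\'eodory equation along an arbitrary $\gamma$ (and a genuine solution along a minimizer, by Proposition~\ref{IVP}). A sub-solution/solution comparison with merely $|\partial_uL|\leq\lambda$ is not the one-line Gronwall you wrote; it needs a separate argument. The clean fix---and what the references effectively do---is to first establish the Herglotz representation $h_{x_0,u_0}(x,t)=\inf_\gamma u_\gamma(t)$, where $u_\gamma$ solves $\dot u_\gamma=L(\gamma,\dot\gamma,u_\gamma)$ with $u_\gamma(0)=u_0$; once that equivalence with (\ref{baacf1}) is in hand, your arguments for (1), (2), (4), (5) go through verbatim, and your plan for (3) is the standard one.
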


%
%

\begin{proposition}\label{pri1}\
\begin{itemize}
\item [(1)](Monotonicity).
Given $x_0\in M$ and $u_1$, $u_2\in\mathbb{R}$,
if $u_1<u_2$, then $h^{x_0,u_1}(x,t)<h^{x_0,u_2}(x,t)$, for all $(x,t)\in M\times (0,+\infty)$;
\item [(2)] (Maximality).
Given $x_0$, $x\in M$, $u_0\in\mathbb{R}$ and $t>0$, let
$S_{x,t}^{x_0,u_0}$ be the set of the solutions $(x(s),p(s),u(s))$ of  (\ref{c}) on $[0,t]$ with $x(0)=x$, $x(t)=x_0$, $u(t)=u_0$.
Then
\[
h^{x_0,u_0}(x,t)=\sup\{u(0)\ |\ (x(s),p(s),u(s))\in S_{x,t}^{x_0,u_0}\}, \quad \forall (x,t)\in M\times(0,+\infty).
\]
\item [(3)] (Lipschitz continuity).
The function $(x_0,u_0,x,t)\mapsto h^{x_0,u_0}(x,t)$ is locally Lipschitz  continuous on $M\times\R\times M\times(0,+\infty)$.
\item [(4)] (Markov property).
Given $x_0\in M$, $u_0\in\mathbb{R}$,
\[
h^{x_0,u_0}(x,t+s)=\sup_{y\in M}h^{y,h^{x_0,u_0}(y,t)}(x,s)
\]
for all  $s$, $t>0$ and all $x\in M$. Moreover, the supremum is attained at $y$ if and only if there exists a minimizer $\gamma$ of $h^{x_0,u_0}(x,t+s)$, such that $\gamma(t)=y$.
\item [(5)] (Reversibility).
Given $x_0$, $x\in M$, and $t>0$, for each $u\in \mathbb{R}$, there exists a unique $u_0\in \mathbb{R}$ such that
\[
h^{x_0,u_0}(x,t)=u.
\]
\end{itemize}
	\end{proposition}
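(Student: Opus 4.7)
The plan is to derive Proposition \ref{pri1} as the dual of Proposition \ref{pri}: whereas $h_{x_0,u_0}(x,t)$ records the minimal terminal $u$-value of a trajectory that starts at $(x_0,u_0)$, the function $h^{x_0,u_0}(x,t)$ records the maximal initial $u$-value of a trajectory that terminates at $(x_0,u_0)$. Since (\ref{2-3}) is an implicit definition, the existence of $h^{x_0,u_0}$ as a continuous function together with the attainment of an interior minimizer is supplied by the reverse-time analog of Proposition \ref{IVP}, obtained by running the variational scheme of \cite[Theorem A]{WWY} with the boundary condition fixed at the final time $t$. Once this is in place, each statement of Proposition \ref{pri1} becomes the sup-version of the corresponding inf-statement of Proposition \ref{pri}, and the proofs in \cite[Theorems C, D]{WWY} and \cite[Theorem 3.1, Propositions 3.1--3.4]{WWY1} can be mirrored after reversing signs and switching \emph{inf} for \emph{sup}.

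I would begin with (2) Maximality, which is the analytic backbone. Along any solution of (\ref{c}), the Legendre duality $L(x,\dot x,u)=\langle\dot x,p\rangle-H(x,p,u)$ gives $\dot u=L(x,\dot x,u)$, so any trajectory with $x(0)=x$, $x(t)=x_0$, $u(t)=u_0$ satisfies
\[u(0)=u_0-\int_0^tL(x(\tau),\dot x(\tau),u(\tau))\,d\tau.\]
Maximizing $u(0)$ over such trajectories is therefore equivalent to minimizing the action appearing in (\ref{2-3}), and the reverse-time analog of Proposition \ref{IVP} identifies this maximum with $h^{x_0,u_0}(x,t)$. Given (2), Monotonicity (1) follows from the comparison principle for the contact ODE in $u$: if $u_1<u_2$ and both extremal trajectories attaining the suprema are compared, the non-negativity of $\partial_u H$ (together with Gronwall applied to the differential equation satisfied by $u_2-u_1$) propagates the strict gap backward in time, producing $h^{x_0,u_1}(x,t)<h^{x_0,u_2}(x,t)$. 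Reversibility (5) is then obtained by pairing Proposition \ref{pri}(5) with (2): for fixed $u$, let $u_0$ be the unique value with $h_{x,u}(x_0,t)=u_0$; the minimizer realising this identity is admissible in the supremum defining $h^{x_0,u_0}(x,t)$, yielding $h^{x_0,u_0}(x,t)\geq u$, while strict inequality would produce a second starting value mapped to $u_0$, contradicting Proposition \ref{pri}(5).

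The Markov property (4) follows from a standard cutting/concatenation argument: splitting a trajectory attaining $h^{x_0,u_0}(x,t+s)$ at an intermediate time $t$ and intermediate point $y$ yields admissible candidates for $h^{y,h^{x_0,u_0}(y,t)}(x,s)$ and for $h^{x_0,u_0}(y,t)$, and the supremum characterisation (2) gives both inequalities, with the attainment criterion matching Proposition \ref{pri}(4). The hardest step will be Lipschitz continuity (3), because (\ref{2-3}) is implicit: $L$ is evaluated at the unknown $h^{x_0,u_0}(\gamma(\tau),t-\tau)$ along the curve, so perturbing any of $(x_0,u_0,x,t)$ simultaneously moves the minimizer and the value. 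Following \cite[Theorem C]{WWY}, I would first use (L2) and the $C^3$ assumption on $H$ to bound minimizers in $C^1$ uniformly on compacta, then close a Gronwall-type inequality coming from the linearisation of the implicit relation, and finally combine this with Proposition \ref{pri}(3) and the reversibility (5) to transfer local Lipschitz control of $h_{x_0,u_0}$ to $h^{x_0,u_0}$; this contraction-style estimate for the self-referential expression (\ref{2-3}) is the main technical obstacle.
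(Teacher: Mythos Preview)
The paper does not prove Proposition~\ref{pri1}; it is imported from \cite{WWY,WWY1}, so there is no in-paper argument to compare against. Your duality strategy---run the variational scheme with the boundary condition at the terminal time and then mirror each item of Proposition~\ref{pri} with sup in place of inf---is precisely how \cite{WWY1} obtains these statements, and your outlines for (2), (4), (5) and (3) are on target.

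Your argument for (1), however, has a real gap. First, you invoke the non-negativity of $\partial_uH$, but the paper states explicitly at the start of Section~\ref{apA} that all of Proposition~\ref{pri1} holds under the two-sided bound $|\partial_uH|\le\lambda$; the sign of $\partial_uH$ plays no role, and your proof must not use it. Second, and more seriously, the Gronwall comparison you describe applies to two $u$-components evolving along the \emph{same} curve $x(\cdot)$, whereas the extremal trajectories realising $h^{x_0,u_1}(x,t)$ and $h^{x_0,u_2}(x,t)$ generally have different $x$-components (the $\dot p$-equation in (\ref{c}) couples $p$ to $u$), so the quantity ``$u_2-u_1$'' you want to Gronwall is not defined along a common curve and does not satisfy an ODE.

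The clean repair is to reorder your steps. From (2) together with Proposition~\ref{pri}(1),(2),(5) one obtains the equivalence
\[
h^{x_0,u_0}(x,t)=u\quad\Longleftrightarrow\quad h_{x,u}(x_0,t)=u_0,
\]
valid under $|\partial_uH|\le\lambda$ alone. Strict monotonicity of $u_0\mapsto h^{x_0,u_0}(x,t)$ then follows immediately from strict monotonicity of $u\mapsto h_{x,u}(x_0,t)$ (Proposition~\ref{pri}(1)), and (5) drops out of the same equivalence without the circularity your current ordering risks.
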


%
%

By definition, we have
\begin{proposition}\label{pr11}
	Let $(x(\cdot),u(\cdot)):\mathbb{R}\to M\times\mathbb{R}$ be a globally minimizing curve. Then for all $t_1$, $t_2\in\mathbb{R}$ with $t_1\leq t_2$, \[u(t_2)=\inf_{s>0}h_{x(t_1),u(t_1)}(x(t_2),s)\text{\ \ if\  and\  only\  if\ \ } u(t_1)=\sup_{s>0}h^{x(t_2),u(t_2)}(x(t_1),s).\]
\end{proposition}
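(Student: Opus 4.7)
The plan is to reduce the equivalence to a pointwise duality between the two action functions. Define $\phi(s):=h_{x(t_1),u(t_1)}(x(t_2),s)$ and $\psi(s):=h^{x(t_2),u(t_2)}(x(t_1),s)$ for $s>0$. Because $(x(\cdot),u(\cdot))$ is globally minimizing, Definition \ref{gm} evaluated at the particular choice $s=t_2-t_1$ gives $\phi(t_2-t_1)=u(t_2)$, so the equality $u(t_2)=\inf_{s>0}\phi(s)$ is equivalent to the pointwise inequality $\phi(s)\geq u(t_2)$ for every $s>0$; analogously, once I have shown $\psi(t_2-t_1)=u(t_1)$, the sup equality reduces to $\psi(s)\leq u(t_1)$ for every $s>0$. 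The proposition therefore boils down to the pointwise duality claim: for arbitrary $x_0,x_1\in M$, $u_0,u_1\in\R$ and $s>0$,
\[ h_{x_0,u_0}(x_1,s)\geq u_1 \quad\Longleftrightarrow\quad h^{x_1,u_1}(x_0,s)\leq u_0,\]
together with its equality counterpart.

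To prove this duality I would fix $x_0,x_1,s$ and study the map $f(u):=h^{x_1,u}(x_0,s)$. Strict monotonicity from Proposition \ref{pri1}(1) makes $f$ strictly increasing, and reversibility from Proposition \ref{pri1}(5) makes it surjective onto $\R$; hence $f$ is a strictly increasing bijection of $\R$. Let $u_1^{\ast}:=f^{-1}(u_0)$. The key step is to show $h_{x_0,u_0}(x_1,s)=u_1^{\ast}$, after which the equivalence follows by applying the bijection $f$ to the inequalities between $u_1$ and $u_1^{\ast}$. For one direction, the minimality in Proposition \ref{pri}(2) supplies a trajectory of (\ref{c}) connecting $(x_0,u_0)$ at time $0$ to $(x_1,\tilde u_1)$ at time $s$, where $\tilde u_1:=h_{x_0,u_0}(x_1,s)$; feeding this trajectory into the maximality in Proposition \ref{pri1}(2) yields $f(\tilde u_1)\geq u_0=f(u_1^{\ast})$, and strict monotonicity forces $\tilde u_1\geq u_1^{\ast}$. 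The reverse inequality comes symmetrically, starting from a trajectory realizing (or a limiting sequence approximating) the supremum at $u_1^{\ast}$ and then applying minimality.

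With the duality in hand, applying its equality case at $s=t_2-t_1$ to $\phi(t_2-t_1)=u(t_2)$ produces $\psi(t_2-t_1)=u(t_1)$ immediately, so the sup is realized at the same time; applying the inequality case at each $s>0$ converts $\phi(s)\geq u(t_2)$ into $\psi(s)\leq u(t_1)$, yielding the sought equivalence. The main obstacle I anticipate is the clean execution of the duality lemma: one must marry the minimality characterization of $h_{\cdot,\cdot}$ to the maximality characterization of $h^{\cdot,\cdot}$ without circular reasoning, and the bijectivity of $f$ coming from monotonicity and reversibility is what closes the loop.
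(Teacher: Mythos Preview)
Your argument is correct and is precisely the natural unpacking of what the paper leaves implicit: the paper offers no proof beyond the phrase ``By definition, we have'' preceding the statement, and your duality lemma $h_{x_0,u_0}(x_1,s)\geq u_1\Longleftrightarrow h^{x_1,u_1}(x_0,s)\leq u_0$, obtained from the minimality/maximality characterizations (Propositions~\ref{pri}(2) and \ref{pri1}(2)) together with strict monotonicity and reversibility, is exactly the content that makes this ``definition'' work. One cosmetic remark: in the reverse inequality $\tilde u_1\leq u_1^{\ast}$ you do not need a limiting sequence, since the supremum defining $h^{x_1,u_1^{\ast}}(x_0,s)$ is attained by an actual solution of (\ref{c}) (this is part of Proposition~\ref{IVP} and its forward analogue), which you can feed directly into Proposition~\ref{pri}(2).
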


\begin{remark}\label{rcpps}
By Proposition \ref{pr11},
\begin{itemize}
\item
	a curve $(x(\cdot),u(\cdot)):\mathbb{R}\to M\times\mathbb{R}$ is globally minimizing if and only if for each $t_1< t_2\in\mathbb{R}$,
	\begin{equation}\label{3-3o}
	u(t_1)=h^{x(t_2),u(t_2)}(x(t_1),t_2-t_1);
	\end{equation}
	\item a curve $(x(\cdot),u(\cdot)):\mathbb{R}\to M\times\mathbb{R}$ is  semi-static if and only if  it is globally minimizing and for  each $t_1\leq t_2\in\mathbb{R}$,
\begin{equation}\label{3-3oo}
	u(t_1)=\sup_{s>0}h^{x(t_2),u(t_2)}(x(t_1),s);
	\end{equation}
\item a positively (resp. negatively) semi-static  curve can be also characterized in a similar manner.
\end{itemize}
\end{remark}

\subsection{Lax-Oleinik semigroups, weak KAM solutions and the Ma\~{n}\'{e} set}
Let us recall two  semigroups of operators introduced in \cite{WWY1}.  Define a family of nonlinear operators $\{T^-_t\}_{t\geq 0}$ from $C(M,\mathbb{R})$ to itself as follows. For each $\varphi\in C(M,\mathbb{R})$, denote by $(x,t)\mapsto T^-_t\varphi(x)$ the unique continuous function on $M\times[0,+\infty)$ such that

\[
T^-_t\varphi(x)=\inf_{\gamma}\left\{\varphi(\gamma(0))+\int_0^tL(\gamma(\tau),\dot{\gamma}(\tau),T^-_\tau\varphi(\gamma(\tau)))d\tau\right\},
\]
where the infimum is taken among absolutely continuous curves $\gamma:[0,t]\to M$ with $\gamma(t)=x$.  Let $\gamma$ be a curve achieving the infimum, and $x(s):=\gamma(s)$, $u(s):=T_s^-\varphi(x(s))$, $p(s):=\frac{\partial L}{\partial \dot{x}}(x(s),\dot{x}(s),u(s))$.
Then $(x(s),p(s),u(s))$ satisfies  (\ref{c}) with $x(t)=x$.

It is not difficult to see that $\{T^-_t\}_{t\geq 0}$ is a semigroup of operators and $T^-_t\varphi(x)$ is a viscosity solution of $w_t+H(x,w,w_x)=0$ with $w(x,0)=\varphi(x)$.

Similarly, one can define another semigroup of operators $\{T^+_t\}_{t\geq 0}$ by
\begin{equation*}\label{fixufor}
T^+_t\varphi(x)=\sup_{\gamma}\left\{\varphi(\gamma(t))-\int_0^tL(\gamma(\tau),\dot{\gamma}(\tau),T^+_{t-\tau}\varphi(\gamma(\tau)))d\tau\right\},
\end{equation*}
where the infimum is taken among absolutely  continuous curves $\gamma:[0,t]\to M$ with $\gamma(0)=x$. Let $\gamma$ be a curve achieving the infimum, and $x(s):=\gamma(s)$, $u(s):=T_{t-s}^+\varphi(x(s))$, $p(s):=\frac{\partial L}{\partial \dot{x}}(x(s),\dot{x}(s),u(s))$.
Then $(x(s),p(s),u(s))$ satisfies  (\ref{c}) with $x(0)=x$.

The following proposition gives a relation between Lax-Oleinik semigroups and action functions. See \cite[Propositions 4.1, 4.2]{WWY1} for details.
\begin{proposition}\label{pr4.2}
	For each  $\varphi\in C(M,\mathbb{R})$, we have
	\begin{equation*}
	T^{-}_t\varphi(x)=\inf_{y\in M}h_{y,\varphi(y)}(x,t),\quad
	T^{+}_t\varphi(x)=\sup_{y\in M}h^{y,\varphi(y)}(x,t),
	\quad  \forall (x,t)\in M\times(0,+\infty).
	\end{equation*}
\end{proposition}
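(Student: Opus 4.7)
The plan is to prove each of the two representation formulas by a two-sided comparison argument, in which the implicit integrand containing $T^\pm_\tau\varphi$ is compared with the action function along a suitably chosen minimizing curve. The key tool driving the comparison is the non-increasing property (L3), which lets one turn an integral inequality of Gronwall type into a pointwise ordering of two candidate evolutions sharing the same initial value.

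\textbf{Upper bound for $T^-$.} First I would establish $T^-_t\varphi(x)\le h_{y,\varphi(y)}(x,t)$ for every fixed $y\in M$. By Proposition \ref{IVP} there exists a Lipschitz minimizer $\gamma:[0,t]\to M$ with $\gamma(0)=y$, $\gamma(t)=x$ realizing $h_{y,\varphi(y)}(x,t)$; along it $u(\tau):=h_{y,\varphi(y)}(\gamma(\tau),\tau)$ solves $u(\tau)=\varphi(y)+\int_0^\tau L(\gamma,\dot\gamma,u(s))\,ds$. Set $v(\tau):=T^-_\tau\varphi(\gamma(\tau))$; the semigroup property of $\{T^-_t\}$ applied to the sub-curve $\gamma|_{[s,\tau]}$ yields $v(\tau)-v(s)\le \int_s^\tau L(\gamma,\dot\gamma,v(r))\,dr$, with $v(0)=\varphi(y)=u(0)$. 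Because $\partial_u L\le 0$, the map $z\mapsto L(\gamma(\tau),\dot\gamma(\tau),z)$ is non-increasing, so a standard contradiction argument (let $s$ be the last time before some putative $\tau_0$ at which $v=u$; on $(s,\tau_0]$ where $v>u$ one gets $v(\tau_0)-v(s)\le \int_s^{\tau_0} L(\gamma,\dot\gamma,u)\,dr=u(\tau_0)-u(s)$, a contradiction) gives $v\le u$ on $[0,t]$. Evaluating at $\tau=t$ and then taking the infimum over $y$ yields $T^-_t\varphi(x)\le \inf_{y\in M}h_{y,\varphi(y)}(x,t)$.

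\textbf{Lower bound for $T^-$.} For the reverse direction I would use existence of a minimizer $\tilde\gamma:[0,t]\to M$ with $\tilde\gamma(t)=x$ for the variational definition of $T^-_t\varphi(x)$; set $y:=\tilde\gamma(0)$ and $u(\tau):=T^-_\tau\varphi(\tilde\gamma(\tau))$. The characterization of $\tilde\gamma$ as a minimizer together with the semigroup property forces $u(\tau)=\varphi(y)+\int_0^\tau L(\tilde\gamma,\dot{\tilde\gamma},u(s))\,ds$, so by defining $p(\tau)$ via (\ref{p}) one obtains an orbit of (\ref{c}) belonging to $S^{x,t}_{y,\varphi(y)}$ with $u(t)=T^-_t\varphi(x)$. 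Proposition \ref{pri}(2) (minimality) then gives $T^-_t\varphi(x)\ge h_{y,\varphi(y)}(x,t)\ge \inf_{y\in M}h_{y,\varphi(y)}(x,t)$.

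\textbf{The $T^+$ case.} The argument is symmetric. For the lower bound $T^+_t\varphi(x)\ge h^{y,\varphi(y)}(x,t)$ one picks a maximizer $\eta$ for $h^{y,\varphi(y)}(x,t)$ (Proposition \ref{IVP} has its backward analogue built into the construction of $h^{\cdot,\cdot}$), sets $u(\tau):=h^{y,\varphi(y)}(\eta(\tau),t-\tau)$ and $v(\tau):=T^+_{t-\tau}\varphi(\eta(\tau))$, and runs the same monotonicity comparison; now $v(\tau)\ge u(\tau)$ along $\eta$, evaluated at $\tau=0$. For the upper bound one chooses a maximizer $\tilde\eta$ for the variational definition of $T^+_t\varphi(x)$, sets $y:=\tilde\eta(t)$, and invokes Proposition \ref{pri1}(2) (maximality) to get $T^+_t\varphi(x)\le h^{y,\varphi(y)}(x,t)\le \sup_y h^{y,\varphi(y)}(x,t)$.

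\textbf{Main obstacle.} The delicate step is the comparison lemma turning $v(\tau)-v(s)\le\int_s^\tau L(\gamma,\dot\gamma,v)\,dr$ plus $u(\tau)-u(s)=\int_s^\tau L(\gamma,\dot\gamma,u)\,dr$ and $v(0)=u(0)$ into $v\le u$. The usual Gronwall inequality is not immediately applicable because $L$ is only non-strictly monotone in $u$, and $v,u$ are a priori only continuous. One must therefore argue by a first-exit/last-touch time as sketched, using continuity of $L$ and of $v,u$, and the sign condition $\partial_u L\le 0$, rather than by differentiating an absolute-value function directly. Once this comparison is in hand, both formulas follow by pairing it with the minimality/maximality characterizations of $h_{\cdot,\cdot}$ and $h^{\cdot,\cdot}$ in Propositions \ref{pri}(2) and \ref{pri1}(2).
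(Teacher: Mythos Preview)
The paper does not supply its own proof of this proposition; it simply records the statement in the preliminaries and cites \cite[Propositions 4.1, 4.2]{WWY1}. So there is no in-paper argument to compare against directly.

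Your two-sided comparison is sound. The lower bound for $T^-$ (and, symmetrically, the upper bound for $T^+$) is exactly what the minimality/maximality characterizations in Propositions~\ref{pri}(2) and~\ref{pri1}(2) deliver, once one uses that a minimizer of $T^-_t\varphi(x)$ lifts to an orbit of (\ref{c}); the paper states this fact immediately after defining $T^-_t$. For the upper bound for $T^-$, your last-touch argument using $\partial_u L\le 0$ is correct under the paper's standing hypothesis (H3). One minor caveat: the paper remarks at the start of Section~\ref{apA} that all of the preliminary facts, this one included, hold under the two-sided bound $|\partial_u H|\le\lambda$ rather than (H3). In that generality the conclusion $v\le u$ still goes through, but via integral Gronwall rather than monotonicity: with $w:=v-u$ and $s$ the last contact time, on any interval where $w>0$ one has $w(\tau)\le\int_s^\tau a(r)\,w(r)\,dr\le\lambda\int_s^\tau w(r)\,dr$, forcing $w\le 0$ there. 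So Gronwall is in fact directly applicable, contrary to the worry in your ``main obstacle'' paragraph; the sign condition (L3) is a convenient shortcut, not an essential ingredient.
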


The following proposition gives a relation between Lax-Oleinik semigroups and weak KAM solutions. See \cite[Lemmas 4.1, 4.2, 6.2]{SWY} for details.
\begin{proposition}\label{pr4.5}
The backward weak KAM solutions of (\ref{hj}) are the same as the viscosity solutions of (\ref{hj}). Moreover,
\begin{itemize}
\item
 [(i)] A function $u:M\to\R$ is a backward weak KAM solution of (\ref{hj}) if and only if $T^-_tu=u$ for all $t\geq 0$;
 \item [(ii)] A function $v:M\to\R$ is a forward weak KAM solution of (\ref{hj}) if and only if $T^+_tv=v$ for all $t\geq 0$.
 \end{itemize}
\end{proposition}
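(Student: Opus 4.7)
The plan is to establish the semigroup characterizations (i) and (ii) first via the action-function machinery of Subsection \ref{A.2}, and then deduce the equivalence between backward weak KAM solutions and viscosity solutions of (\ref{hj}) by passing through the evolutionary Hamilton--Jacobi equation.

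I will treat (i) in detail, since (ii) is symmetric. The direction ``$T^-_tu=u\Rightarrow u\in\cS_-$'' proceeds by unfolding the fixed-point identity through Proposition \ref{pr4.2}: substituting $T^-_\tau u=u$ in the variational formula turns it into $u(x)=\inf_\gamma\bigl\{u(\gamma(0))+\int_0^tL(\gamma,\dot\gamma,u\circ\gamma)\,d\tau\bigr\}$, which yields the domination of Definition \ref{bwkam}(i) by inserting an arbitrary piecewise $C^1$ competitor. For the calibrated curve of Definition \ref{bwkam}(ii), I will pick optimal curves $\gamma_n:[-n,0]\to M$ (existence by Proposition \ref{IVP}), concatenate them via the Markov property (Proposition \ref{pri}(4)) and pass to a subsequential limit on $(-\infty,0]$ using the uniform Lipschitz bound on minimizers implied by superlinearity (L2).

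Conversely, suppose $u\in\cS_-$; by Proposition \ref{pr4.2} I must show $u(x)=\inf_{y\in M}h_{y,u(y)}(x,t)$. For the upper bound I will exploit the calibrated curve $\gamma$ from Definition \ref{bwkam}(ii): a standard variational argument promotes $(\gamma,p,u\circ\gamma)$ with $p=\partial L/\partial\dot x(\gamma,\dot\gamma,u\circ\gamma)$ to a (\ref{c})-trajectory attaining the infimum in Proposition \ref{pri}(2) starting from $(\gamma(-t),u(\gamma(-t)))$, so $h_{\gamma(-t),u(\gamma(-t))}(x,t)\leq u(x)$. For the lower bound I will use a Gronwall-type comparison: fixing $y\in M$ and a minimizing (\ref{c})-trajectory $(x_f,p_f,u_f)$ with $x_f(0)=y$, $x_f(t)=x$, $u_f(0)=u(y)$ that attains $h_{y,u(y)}(x,t)$ (Proposition \ref{pri}(2)), set $\varphi(s):=u(x_f(s))-u_f(s)$, which vanishes at $s=0$. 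If $\varphi(s^*)>0$ for some $s^*\in(0,t]$, let $s_0<s^*$ be the last zero of $\varphi$ and subtract $u_f(s)-u_f(s_0)=\int_{s_0}^sL(x_f,\dot x_f,u_f)\,d\tau$ from the domination inequality of Definition \ref{bwkam}(i) along $x_f$ to obtain
\[
\varphi(s)\leq\int_{s_0}^s\bigl[L(x_f,\dot x_f,u\circ x_f)-L(x_f,\dot x_f,u_f)\bigr]d\tau\leq 0,
\]
the second inequality coming from (L3) applied to $u\circ x_f\geq u_f$ on $(s_0,s^*]$. This contradicts $\varphi(s^*)>0$, hence $u(x)\leq h_{y,u(y)}(x,t)$. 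Item (ii) then follows by the obvious symmetric argument, replacing $h_{y,\cdot}$ by $h^{y,\cdot}$, Proposition \ref{pri} by Proposition \ref{pri1}, and orienting the comparison backwards in time as in Remark \ref{rcpps}.

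To finish, I will deduce the equivalence ``backward weak KAM solution = viscosity solution of (\ref{hj})'' by a standard PDE argument. Since $(x,t)\mapsto T^-_t\varphi(x)$ is always a viscosity solution of $w_t+H(x,w_x,w)=0$ with initial datum $\varphi$, if $u\in\cS_-$ then by (i) $T^-_tu=u$, so $w(x,t):=u(x)$ is a viscosity solution of this evolutionary equation and $w_t\equiv 0$ forces $H(x,Du,u)=0$ in the viscosity sense. Conversely, if $u$ solves (\ref{hj}) viscously, then both $w(x,t):=u(x)$ and $(x,t)\mapsto T^-_tu(x)$ are viscosity solutions of the same evolutionary Cauchy problem with initial datum $u$, hence coincide by the evolutionary comparison principle under (H1)--(H3) (cf.\ \cite[Theorem 3.2]{inc}), and (i) yields $u\in\cS_-$. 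The main obstacle I anticipate is the Gronwall-type step in the third paragraph: because $h_{y,u(y)}(x,t)$ is defined implicitly with the $u$-variable of $L$ evaluated along the unknown (\ref{c})-trajectory, one cannot directly chain the calibration of $u$ with the integral equation for $u_f$; sign-tracking of $\varphi=u\circ x_f-u_f$ combined with (L3) serving as one-sided monotonicity is the crucial ingredient beyond classical Fathi theory.
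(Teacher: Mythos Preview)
The paper does not give its own proof of this proposition: it simply refers to \cite[Lemmas 4.1, 4.2, 6.2]{SWY}. So there is no ``paper's proof'' to compare against, and your argument must be judged on its own merits.

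Your proof is correct in outline and in substance. The Gronwall-type comparison you isolate for the lower bound $u(x)\leq h_{y,u(y)}(x,t)$ is exactly the right replacement for the classical ``domination $\Rightarrow$ sub-calibration'' step once the action functional becomes implicit; and the upper bound via Proposition~\ref{pri}(2), using that a calibrated curve of $u\in\cS_-$ lifts to a (\ref{c})-trajectory, is the standard route in this setting. The passage from the semigroup fixed-point characterization to the viscosity formulation through the evolutionary equation is also fine.

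Two small points are worth flagging. First, the preliminaries section advertises Proposition~\ref{pr4.5} under the two-sided bound $|\partial_uH|\leq\lambda$, not just (H3). Your sign argument ``$\varphi\leq\int[L(u\circ x_f)-L(u_f)]\leq 0$'' uses (L3) to kill the integrand pointwise; under the two-sided Lipschitz bound one instead gets $\varphi(s)\leq\lambda\int_{s_0}^s\varphi\,d\tau$ on the interval where $\varphi>0$, and the genuine Gronwall inequality (with zero initial datum at $s_0$) still forces $\varphi\leq 0$. So the proof extends with no new idea. Second, the reference \cite[Theorem~3.2]{inc} you invoke for uniqueness is a comparison principle for the \emph{stationary} equation under strict monotonicity in $u$; what you actually need is the comparison principle for the \emph{evolutionary} Cauchy problem $w_t+H(x,w_x,w)=0$, which holds under (H1)--(H3) by standard viscosity-solution theory but is not the cited theorem. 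This is a citation issue, not a mathematical gap.
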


By \cite[Theorem 1.2]{WWY2} and \cite[Theorem 1]{WWY3}, we have

\begin{proposition}\label{nonemp1}
$\cS_-\neq \emptyset$ if and only if  $\cS_+\neq \emptyset$. More precisely, the following statements hold.
\begin{itemize}
\item [(1)] Let $v_-\in \cS_-$. Then the function $x\mapsto\lim_{t\ri\infty}T_t^+v_-(x)$ is well defined, and it belongs to $\cS_+$.
\item [(2)] Let $v_+\in \cS_+$. Then the function $x\mapsto\lim_{t\ri\infty}T_t^-v_+(x)$ is well defined, and it belongs to $\cS_-$.
\end{itemize}
\end{proposition}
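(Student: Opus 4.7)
The plan is to prove both directions by a monotone-convergence scheme applied to the Lax--Oleinik orbit of the given weak KAM solution. I focus on (1); part (2) is entirely symmetric, with the roles of $T_t^-$ and $T_t^+$, of sub- and supersolutions, and of backward and forward calibration curves swapped.

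First I would show that $t\mapsto T_t^+v_-$ is pointwise monotone non-increasing and dominated by $v_-$. By Proposition \ref{pr4.2}, $T_t^+v_-(x)=\sup_{y\in M}h^{y,v_-(y)}(x,t)$, and by Proposition \ref{pri1}(2) each value $h^{y,v_-(y)}(x,t)$ equals the initial $u$-value $u(0)$ for a trajectory $(x(\cdot),p(\cdot),u(\cdot))$ of (\ref{c}) on $[0,t]$ with $x(0)=x$, $x(t)=y$, $u(t)=v_-(y)$. Along such a trajectory $\dot u(s)=L(x(s),\dot x(s),u(s))$, while the viscosity subsolution property of $v_-$ gives
\[
v_-(x(s_2))-v_-(x(s_1))\leq\int_{s_1}^{s_2}L(x(\tau),\dot x(\tau),v_-(x(\tau)))\,d\tau
\]
for $0\leq s_1\leq s_2\leq t$. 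Setting $g(s):=u(s)-v_-(x(s))$, one obtains $g(t)=0$ together with $\dot g(s)\geq\alpha(s)g(s)$ a.e., where $\alpha(s)\in[-\lambda,0]$ by (L3); a backward Gr\"onwall argument through the multiplier $\exp\bigl(-\int_t^s\alpha\,d\tau\bigr)$ then forces $g\leq 0$ on $[0,t]$. Evaluating at $s=0$ yields $h^{y,v_-(y)}(x,t)\leq v_-(x)$, so $T_t^+v_-\leq v_-$. Combining this with order preservation of $T_t^+$ (a consequence of (H3)) and the semigroup identity $T_{t_2}^+v_-=T_{t_1}^+T_{t_2-t_1}^+v_-$ gives $T_{t_2}^+v_-\leq T_{t_1}^+v_-$ for $0\leq t_1\leq t_2$.

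Next I would promote pointwise monotonicity to a uniform limit using a uniform lower bound and an equi-Lipschitz estimate. The lower bound follows from coercivity of $L$, boundedness of $v_-$ on the compact manifold $M$, and the Lipschitz continuity of $h^{y,u}(x,t)$ from Proposition \ref{pri1}(3); the equi-Lipschitz property of $\{T_t^+v_-\}_{t\geq 1}$ is the standard consequence of superlinearity and coercivity of $L$ together with (A). Dini's theorem applied to this monotone, equi-Lipschitz family on compact $M$ then yields the existence of $v_+:=\lim_{t\to+\infty}T_t^+v_-$ and uniform convergence. Passing to the limit in $T_{t+s}^+v_-=T_s^+T_t^+v_-$ via uniform continuity of $T_s^+$ gives $T_s^+v_+=v_+$ for all $s\geq 0$, and Proposition \ref{pr4.5}(ii) then delivers $v_+\in\cS_+$. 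For (2), the same scheme applied to $T_t^-$ acting on a given $v_+\in\cS_+$ produces $\lim_{t\to+\infty}T_t^-v_+\in\cS_-$; the Gr\"onwall comparison now uses the supersolution property of $v_+$ in place of the subsolution property of $v_-$ and is run forward from $s=0$ instead of backward from $s=t$.

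The main obstacle will be the trajectory-wise Gr\"onwall comparison, because the subsolution property of $v_-$ is available only as an integral inequality and $v_-$ is merely Lipschitz, so the pointwise derivative $\tfrac{d}{ds}v_-(x(s))$ need not exist everywhere classically. I would handle this by working in integrated form throughout, using that $s\mapsto v_-(x(s))$ is Lipschitz, hence absolutely continuous and differentiable almost everywhere, and that the Gr\"onwall multiplier remains bounded by $\exp(\lambda t)$; an alternative remedy is to smooth $v_-$ by inf-convolution and pass to the limit. The fact that the sign chase closes in the required direction is precisely where (H3) (equivalently (L3)) enters, and is what makes both halves of the equivalence go through in tandem.
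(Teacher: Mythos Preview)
The paper does not give its own proof here; it cites \cite[Theorem~1.2]{WWY2} and \cite[Theorem~1]{WWY3}. Your monotonicity step is correct: the Gr\"onwall comparison along an extremal for $h^{y,v_-(y)}(x,t)$ yields $T_t^+v_-\le v_-$, and together with order preservation and the semigroup law this makes $t\mapsto T_t^+v_-$ nonincreasing. The passage to the limit in the fixed-point identity via Proposition~\ref{psg}(2) is also fine.

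The gap is the uniform-in-$t$ lower bound on $T_t^+v_-$. The reasons you give---coercivity of $L$, boundedness of $v_-$, and the Lipschitz continuity of $h^{y,u}(x,t)$ from Proposition~\ref{pri1}(3)---control $T_t^+v_-$ only for $t$ in compact intervals, since Proposition~\ref{pri1}(3) is a \emph{local} Lipschitz statement and says nothing as $t\to+\infty$; the paper itself observes (just before Lemma~\ref{lem3.1qq}) that $\lim_{t\to+\infty}h^{x_0,u_0}(x,t)$ may fail to exist. Your equi-Lipschitz claim has the same defect, because the bootstrap $T_t^+v_-=T_1^+(T_{t-1}^+v_-)$ presupposes uniform boundedness of $T_{t-1}^+v_-$. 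In the cited references the bound comes from the existence of semi-static curves calibrated by $v_-$: along such a curve Lemma~\ref{uueequi} gives $|h^{x(0),v_-(x(0))}(\cdot,t)|\le K$, whence $T_t^+v_-\ge -K$. Supplying that input is the substantive missing step, not the a.e.\ differentiability issue you flag as the main obstacle. Note also that (1) and (2) are not symmetric under (H3): for (2) the required upper bound $T_t^-v_+(x)\le h_{y_0,v_+(y_0)}(x,t)$ stays finite as $t\to+\infty$ by (\ref{ba}), which has no analogue for $h^{\cdot,\cdot}$; so (1) is the hard direction.
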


In the following context of this section, we proceed under the assumption $\cS_-\neq \emptyset$. {Here we recall that the assumption $\cS_-\neq \emptyset$ is equivalent to assumption (A).}
It is well known that each $u_-\in \cS_-$ is semiconcave and $u_+\in \cS_+$ is semiconvex. For each $u_{\pm}\in \cS_{\pm}$, we define two subsets of $T^*M\times\R$ associated with $u_{\pm}$  respectively by
\begin{equation}\label{gv--}
G_{u_{\pm}}:=\mathrm{cl}\left(\big\{(x,p,u)\ |\ Du_{\pm}(x)\ \text{exists},\  u=u_{\pm}(x),\ p=Du_{\pm}(x)\big\}\right),
\end{equation}
where $\mathrm{cl}(A)$ denotes the closure of $A\subseteq T^*M\times \R$.
Define
\[\tilde{\N}_{v_{\pm}}:=\tilde{\N}\cap G_{v_{\pm}},\quad \N_{v_{\pm}}:=\pi^*\tilde{\N}_{v_{\pm}}.
\]

The following   proposition shows a relation between  weak KAM solutions and semi-static curves. See \cite[Proposition 17]{WWY4} for details.

\begin{proposition}\label{pr119955}
	Let $(x(\cdot),u(\cdot)):\R\ri M\times\mathbb{R}$ be a semi-static curve.  Then there exists $v_-\in \cS_-$  (resp. $v_+\in \cS_+$) such that $u(t)=v_-(x(t))$ (resp. $u(t)=v_+(x(t)$)  for each $t\in \R$.
\end{proposition}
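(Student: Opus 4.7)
The plan is to prove the backward case; the forward case is symmetric, using the time-reversed versions of Propositions \ref{pri} and \ref{pr4.2}. The core idea is to construct $v_-$ as a limit of one-sided Ma\~n\'e potentials anchored along the semi-static curve, exploiting the semi-static identity
\[
u(t_2) = \inf_{s > 0} h_{x(T), u(T)}(x(t_2), s), \qquad T \leq t_2,
\]
which already tells us that such a potential takes the value $u(t_2)$ on the curve; what remains is to upgrade it to a genuine weak KAM solution.

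Concretely, for $T < 0$ define
\[
\varphi_T(x) := \inf_{s > 0} h_{x(T), u(T)}(x, s), \qquad x \in M,
\]
so that $\varphi_T(x(t)) = u(t)$ for every $t \geq T$. I would then show: (a) $\varphi_T$ is dominated in the sense of Definition \ref{bwkam}(i), inherited from the fact that each $h_{x(T), u(T)}(\cdot, s)$ is dominated on its time interval via the Markov property (Proposition \ref{pri}(4)); (b) the family $\{\varphi_T\}_{T < 0}$ is equi-bounded and equi-Lipschitz on $M$, using the Lipschitz continuity of $h$ in Proposition \ref{pri}(3) combined with a uniform comparison barrier supplied by some fixed $w_- \in \cS_-$ (nonempty under (A) by Proposition \ref{nonemp1}) via the monotonicity in Proposition \ref{pri}(1); (c) along a sequence $T_n \to -\infty$, $\varphi_{T_n}$ converges uniformly to some $v_- \in C(M, \R)$ by Arzel\`a-Ascoli, and the pointwise identity $\varphi_{T_n}(x(t)) = u(t)$ passes to the limit, producing $v_-(x(t)) = u(t)$ for every $t \in \R$.

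It remains to verify $v_- \in \cS_-$. Domination of $v_-$ follows by passing to the limit in (a). For the calibration property of Definition \ref{bwkam}(ii), fix $x \in M$ and choose near-minimizers $\gamma_n : [-s_n, 0] \to M$ with $\gamma_n(0) = x$ along which $\varphi_{T_n}$ is approximately calibrated. A diagonal Arzel\`a-Ascoli argument produces a limit curve $\gamma : (-\infty, 0] \to M$ along which $v_-$ is calibrated, so $v_- \in \cS_-$ by Proposition \ref{pr4.5}(i). The main obstacle is obtaining the equi-Lipschitz estimate in step (b) uniformly in $T \to -\infty$: a priori the infimum in $\varphi_T$ may be realized on curves of widely varying duration, and the Lipschitz constant of $h_{x(T), u(T)}(\cdot, s)$ from Proposition \ref{pri}(3) is local in $s$; controlling this requires (A) essentially, since it is precisely the comparison with a fixed semiconcave $w_- \in \cS_-$ that yields $T$-independent bounds on $\varphi_T$ and hence uniform Lipschitz control.
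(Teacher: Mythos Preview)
The paper does not prove this proposition; it simply refers to \cite[Proposition 17]{WWY4}. So there is no in-paper argument to compare against, and your task is really to produce a self-contained proof.

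Your outline is sound and the construction $\varphi_T(x)=\inf_{s>0}h_{x(T),u(T)}(x,s)$ is the right object: the semi-static identity gives $\varphi_T(x(t))=u(t)$ for $t\ge T$ exactly as you say, and one can check (using the Markov property together with the monotonicity (L3) of $L$ in $u$) that each $\varphi_T$ is dominated. You should make that use of (L3) explicit: the inequality
\[
h_{x(T),u(T)}(\gamma(b),s+(b-a))\le h_{x(T),u(T)}(\gamma(a),s)+\int_a^b L(\gamma,\dot\gamma,h_{x(T),u(T)}(\gamma(\tau),\tau-a+s))\,d\tau
\]
only becomes the desired domination for $\varphi_T$ after one replaces $h$ by $\varphi_T\le h$ inside the integrand, and this step needs $\partial_uL\le0$. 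Once domination is in hand, equi-Lipschitz continuity follows from uniform boundedness by the standard geodesic-test argument, so your concern in (b) is less serious than you suggest; the bounds themselves are supplied by Lemma \ref{uueequi} (for $s>\delta$) and the short-time lower barrier in the proof of Lemma \ref{fu1}. It is also worth noting that $\varphi_T$ is monotone in $T$ (a short Markov computation), so no subsequence is needed.

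The genuine gap is the calibration step. Your minimizers $\gamma_n$ calibrate $h_{x(T_n),u(T_n)}(\cdot,\cdot)$ along their own time variable, not $\varphi_{T_n}$; passing from one to the other again requires (L3) and a comparison of the two ``values along the curve'', and you have not explained why the near-optimal times $s_n$ diverge, which is needed for the limit curve to be defined on all of $(-\infty,0]$. Both points can be handled, but they are where the real work lies, and your sketch does not yet address them.
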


The following   proposition shows a relation between  weak KAM solutions and the Ma\~{n}\'{e} set. See  \cite[Theorem 1.3]{WWY4} for details.
\begin{proposition}\label{88996}
Let  $v_-\in \cS_-$, $v_+\in \cS_+$. Let \[\mathcal{I}_{v_-}:=\{x\in M\ |\ v_-(x)=\lim_{t\ri\infty}T_t^+v_-(x)\}\]
 \[\mathcal{I}_{v_+}:=\{x\in M\ |\ v_+(x)=\lim_{t\ri\infty}T_t^-v_+(x)\}.\]
 Then both $\mathcal{I}_{v_-}$ and $\mathcal{I}_{v_+}$ are not empty. Moreover,
\[\tilde{\N}_{v_{\pm}}=\{(x,p,u)\in T^*M\times\R\ |\ x\in \mathcal{I}_{v_{\pm}},\ u=v_{\pm}(x),\ p=Dv_{\pm}(x) \},\]
\[\tilde{\N}=\cup_{v_-\in \cS_-}\tilde{\N}_{v_-}=\cup_{v_+\in \cS_+}\tilde{\N}_{v_+}.\]
\end{proposition}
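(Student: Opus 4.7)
The plan is to prove Proposition \ref{88996} in three stages: non-emptiness of $\mathcal{I}_{v_\pm}$, the explicit graph representation of $\tilde{\N}_{v_\pm}$, and the union formula $\tilde{\N}=\bigcup_{v_-\in\cS_-}\tilde{\N}_{v_-}=\bigcup_{v_+\in\cS_+}\tilde{\N}_{v_+}$. The main ingredients available are Proposition \ref{nonemp1} (conjugation between $\cS_-$ and $\cS_+$ via the long-time limits of the Lax-Oleinik semigroups), Proposition \ref{pr4.2} (representing $T_t^\pm$ through the action functions $h_{\cdot,\cdot}$ and $h^{\cdot,\cdot}$), and Proposition \ref{pr119955} (every semi-static curve is calibrated by some $v_-\in\cS_-$ and some $v_+\in\cS_+$), together with Remark \ref{rcpps} which reformulates the semi-static property via $h^{\cdot,\cdot}$.

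For the non-emptiness of $\mathcal{I}_{v_-}$, introduce the conjugate $v_+^\ast:=\lim_{t\to\infty}T_t^+v_-\in\cS_+$ furnished by Proposition \ref{nonemp1}(1). Pick $\bar x\in M$ and a $v_-$-calibrated curve $\gamma:(-\infty,0]\to M$ with $\gamma(0)=\bar x$; by Proposition \ref{pr119955} the lifted orbit $(x(s),p(s),u(s))$ in $T^*M\times\R$ is negatively semi-static with $u(s)=v_-(x(s))$. By compactness of $M$, take any $\alpha$-limit point $x_0$ of the projection $x(s)$. Combining the representation $T_t^+v_-(x_0)=\sup_y h^{y,v_-(y)}(x_0,t)$ from Proposition \ref{pr4.2} with the semi-static identity $h^{x(s),v_-(x(s))}(x(s'),s-s')=v_-(x(s'))$ for $s'\leq s\leq 0$ (Remark \ref{rcpps}) and the Lipschitz continuity of $h^{\cdot,\cdot}$ (Proposition \ref{pri1}(3)), one can squeeze $T_t^+v_-(x_0)$ arbitrarily close to $v_-(x_0)$ as $t\to\infty$, showing $x_0\in\mathcal{I}_{v_-}$. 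The non-emptiness of $\mathcal{I}_{v_+}$ follows by the completely symmetric argument applied to $T_t^-$ and $\omega$-limits of forward calibrated curves.

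For the graph characterization of $\tilde{\N}_{v_-}$, both inclusions need to be established. For $\subseteq$, take $(x,p,u)\in\tilde{\N}\cap G_{v_-}$; continuity of $v_-$ combined with the definition of $G_{v_-}$ gives $u=v_-(x)$, and the semi-static orbit through $(x,p,u)$ is itself $v_-$-calibrated. By Proposition \ref{pr119955} this orbit is also calibrated by some $v_+\in\cS_+$, so $v_-(x(t))=v_+(x(t))$ along the orbit. Using $T_t^+v_+=v_+$ (Proposition \ref{pr4.5}), $T_t^+v_-\to v_+^\ast$, and the sup-identity from Proposition \ref{pr4.2} applied with $y=x(t)$, a pinching argument forces $T_t^+v_-(x)=v_-(x)$ for every $t\geq 0$, so $x\in\mathcal{I}_{v_-}$; at such $x$ the semi-concave $v_-$ equals the semi-convex $v_+^\ast$, hence both are differentiable and $p=Dv_-(x)=Dv_+^\ast(x)$. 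For $\supseteq$, given $x\in\mathcal{I}_{v_-}$, the backward leg of a bi-infinite semi-static orbit through $(x,Dv_-(x),v_-(x))$ is obtained from any $v_-$-calibrated curve ending at $x$, while the forward leg is extracted from near-maximizing curves in $T_t^+v_-(x)=\sup_y h^{y,v_-(y)}(x,t)$: the equality $\lim_t T_t^+v_-(x)=v_-(x)$ combined with Proposition \ref{pri1}(3) and a diagonal compactness argument forces these near-optimizers to concentrate, in the limit, on a single Lipschitz orbit of (\ref{c}) defined on $[0,\infty)$.

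The union formula is then nearly immediate: $\tilde{\N}_{v_-}\subseteq\tilde{\N}$ is by definition, while conversely any $X=(x,p,u)\in\tilde{\N}$ lies on a semi-static orbit, which by Proposition \ref{pr119955} is calibrated by some $v_-\in\cS_-$, placing $X$ in $G_{v_-}\cap\tilde{\N}=\tilde{\N}_{v_-}$; the $v_+$-version is symmetric. The main obstacle I anticipate is the forward extension in the $\supseteq$ direction of the graph characterization: because the suprema $T_t^+v_-(x)=\sup_y h^{y,v_-(y)}(x,t)$ are generally attained by different maximizers $y=y_t$ as $t$ varies, assembling the corresponding orbit pieces into a single trajectory of (\ref{c}) on $[0,\infty)$ requires a careful compactness/diagonal extraction that exploits the asymptotic collapse $T_t^+v_-(x)-v_-(x)\to 0$ to exclude ``jumps'' in the limit, a step that is delicate in the contact setting precisely because the implicit $u$-dependence in $L$ prevents the classical order-preservation of the Lax-Oleinik semigroup from being used directly.
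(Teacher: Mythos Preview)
The paper does not prove Proposition~\ref{88996}; it is quoted verbatim from \cite[Theorem~1.3]{WWY4} with the remark ``See \cite[Theorem~1.3]{WWY4} for details.'' So there is no proof in the present paper to compare your proposal against.

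On the merits of your sketch: the overall three-stage plan (non-emptiness via $\alpha$/$\omega$-limits of calibrated curves, graph characterization, then the union formula from Proposition~\ref{pr119955}) is the natural one and matches the architecture of the argument in \cite{WWY4}. Two points deserve care. First, in your $\subseteq$ argument you assert that ``the semi-static orbit through $(x,p,u)$ is itself $v_-$-calibrated''. Proposition~\ref{pr119955} only guarantees calibration by \emph{some} $w_-\in\cS_-$, and membership in $G_{v_-}$ only gives $u=v_-(x)$ and that $p$ is a reachable gradient of $v_-$ at $x$; it does not by itself force $u(t)=v_-(x(t))$ along the \emph{forward} leg of the orbit. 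A short argument is needed here---for instance, combining $v_-(x(t))=T_t^-v_-(x(t))\le h_{x(0),v_-(x(0))}(x(t),t)=u(t)$ with a Gr\"onwall-type comparison using (L3) to get the reverse inequality, or arguing via the $v_+^\ast:=\lim_t T_t^+v_-$ conjugate and showing the orbit lies in $\{v_-=v_+^\ast\}$. Second, you correctly flag the delicate step in the $\supseteq$ direction: assembling a single positively semi-static orbit from the one-parameter family of near-maximizers of $T_t^+v_-(x)$. In \cite{WWY4} this is handled by working with the conjugate $v_+^\ast$ and its forward calibrated curves rather than extracting directly from the suprema; that route avoids the diagonal-extraction issue you anticipate.
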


\vspace{1em}

\section{A technical lemma}\label{cll}
In this section, we are devoted to proving a technical lemma on the existence of certain transitive orbit. It will be used in the proofs of Theorem \ref{raomeome} and Theorem \ref{CONC}.
\begin{lemma}[Transitive criterion]\label{CONC1}
Given $X_1:=(x_1,p_1,u_1)\in \tilde{\A}$, $X_2:=(x_2,p_2,u_2)\in \tilde{\cS}_s$. If
\begin{equation}\tag{$\diamondsuit$}\label{himp1}
\lim_{t\to+\infty}h_{x_1,u_1}(x_2,t)=u_2,\ \limsup_{t\to +\infty}h^{x_2,u_2}(x_1,t)=u_1,
\end{equation}  then $X_1\rightsquigarrow X_2$.
\end{lemma}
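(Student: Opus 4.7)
The plan is to construct, for each pair of neighborhoods $U_i\ni X_i$, a single solution of \eqref{c} whose initial data lies in $U_1$ and which at some later time enters $U_2$. By the first identity of \eqref{himp1}, together with Proposition~\ref{pri}(5), for each sufficiently large $t>0$ there exists a unique $u_0(t)\in\R$ with $h_{x_1,u_0(t)}(x_2,t)=u_2$; by monotonicity (Proposition~\ref{pri}(1)) and Lipschitz continuity (Proposition~\ref{pri}(3)), together with the hypothesis, $u_0(t)\to u_1$ as $t\to+\infty$. Proposition~\ref{pri}(2) then supplies globally minimizing orbits $\zeta_t=(x^t,p^t,u^t):[0,t]\to T^*M\times\R$ of \eqref{c} with $\zeta_t(0)=(x_1,p^t(0),u_0(t))$ and $\zeta_t(t)=(x_2,p^t(t),u_2)$. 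The proof is complete as soon as the endpoint convergences $\zeta_t(0)\to X_1$ and $\zeta_t(t)\to X_2$ hold in $T^*M\times\R$ as $t\to+\infty$: for $t$ large we then take $Y:=\zeta_t(0)\in U_1$ and $\tau:=t$, so that $\Phi_\tau(Y)=\zeta_t(t)\in U_2$.

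The $x$- and $u$-components of $\zeta_t(0)$ and $\zeta_t(t)$ converge to those of $X_1$ and $X_2$ by construction, so only the momenta are at stake. Superlinearity~(L2), combined with boundedness and Lipschitz regularity of the action functions (Propositions~\ref{pri}(3), \ref{pri1}(3)), yields uniform Lipschitz bounds on $x^t$ and hence boundedness of $p^t$ on every compact subinterval. By Arzelà--Ascoli and a diagonal extraction over $T\to+\infty$, we obtain along some subsequence $t_k\to+\infty$: (a) a limit orbit $\hat\zeta^+:[0,+\infty)\to T^*M\times\R$ with $\hat\zeta^+(0)=(x_1,p_\infty^0,u_1)$, coming from $\zeta_{t_k}|_{[0,T]}$; and (b) a limit orbit $\hat\zeta^-:(-\infty,0]\to T^*M\times\R$ with $\hat\zeta^-(0)=(x_2,p_\infty^*,u_2)$, coming from $\zeta_{t_k}|_{[t_k-T,t_k]}$ after the shift $s\mapsto s-t_k$. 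Each limit curve is globally minimizing on its domain, being a limit of globally minimizing curves.

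The technical heart of the argument, and the main obstacle, is to promote this to $p_\infty^*=p_2$ and $p_\infty^0=p_1$. For $p_\infty^*$: using the second identity of \eqref{himp1} together with the Markov property (Proposition~\ref{pri1}(4)), one verifies that $\hat\zeta^-$ satisfies the supremum identity $\hat\zeta^{-,u}(s)=\sup_{\tau>0}h^{x_2,u_2}(\hat\zeta^{-,x}(s),\tau)$ for all $s\leq 0$, so that $\hat\zeta^-$ is strongly static in the sense of Definition~\ref{stcontx}. Since $X_2\in\tilde{\cS}_s$ is a limit of strongly static orbits and since two solutions of \eqref{c} passing through a common point of $T^*M\times\R$ coincide, an approximation argument identifies $\hat\zeta^-(0)$ with $X_2$, yielding $p_\infty^*=p_2$. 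The identification $p_\infty^0=p_1$ is symmetric: the first identity of \eqref{himp1} together with the equivalent $\sup$-characterization of Remark~\ref{rcpps} shows $\hat\zeta^+$ is positively semi-static, which must then coincide with the static orbit through $X_1\in\tilde\A$. With these convergences in hand, $\zeta_{t_k}$ for $k$ large is the required transitive orbit, proving $X_1\rightsquigarrow X_2$.
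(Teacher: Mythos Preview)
Your overall architecture matches the paper's: build minimizing orbits from $x_1$ to $x_2$, extract limits at both ends, and identify the limiting momenta with $p_1$ and $p_2$. However, two of the steps you state do not go through as written.

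First, the convergence $u_0(t)\to u_1$ is not justified. Under (H3) the map $u_0\mapsto h_{x_1,u_0}(x_2,t)$ is a contraction (Proposition~\ref{relation}(1)), so from $h_{x_1,u_1}(x_2,t)\to u_2$ you cannot invert to conclude that the preimage $u_0(t)$ of $u_2$ tends to $u_1$. In fact, reversibility gives $u_0(t)=h^{x_2,u_2}(x_1,t)$, and the second hypothesis in \eqref{himp1} only yields $\limsup_{t\to+\infty}u_0(t)=u_1$, not a full limit. The paper accordingly works along a subsequence $t_n$ on which $h^{x_2,u_2}(x_1,t_n)\to u_1$; your argument must do the same.

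Second, and more seriously, the identification $p_\infty^*=p_2$ is circular as stated. The points $\hat\zeta^-(0)=(x_2,p_\infty^*,u_2)$ and $X_2=(x_2,p_2,u_2)$ share their $(x,u)$-coordinates but not \emph{a priori} their $p$-coordinate, so ODE uniqueness in $T^*M\times\R$ is inapplicable, and the ``approximation argument'' is not an argument. What is actually needed is a graph-type statement over $M\times\R$; this is exactly Proposition~\ref{lem3.2x}, which compares a point of $\tilde{\mathcal V}$ with a point of $\tilde{\mathcal N}^\pm$. Since $X_2$ is only assumed to lie in $\tilde{\mathcal S}_s=\mathrm{cl}(\tilde{\mathcal V})$, Proposition~\ref{lem3.2x} does not apply directly, and the paper first reduces to $X_2\in\tilde{\mathcal V}$ by a density argument (this is the content of Lemma~\ref{CONCLL1} and the short reduction following it). You have skipped this reduction. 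The analogous identification $p_\infty^0=p_1$ also rests on Proposition~\ref{lem3.2x} (applied with $X_1\in\tilde{\mathcal A}$; the proof of that proposition only uses the static condition \eqref{appxx}) and should be invoked explicitly rather than asserted.
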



\begin{remark}
{ This criterion can be viewed as a technical trick. Loosely speaking, it means that under the condition ($\diamondsuit$), certain orbit $(x(\cdot),p(\cdot),u(\cdot)):\mathbb{R}\to T^*M\times\mathbb{R}$ can be controlled by its $x$ and $u$-arguments. Under (H1)-(H3) and (A), we consider the following two functions
 \[A(x):=\lim_{t\to+\infty}h_{x_1,u_1}(x,t),\quad B(x):=\limsup_{t\to +\infty}h^{x_2,u_2}(x,t).\]
 Due to (H3), the limit function $A:M\to \mathbb{R}$ is well defined. In general, $h^{x_2,u_2}(x,t)$ diverges as $t\to+\infty$.  But we have the existence of $B(x)$ if there exists a semi-static curve $(x(\cdot),u(\cdot)):\mathbb{R}\to M\times\mathbb{R}$ passing through $(x_2,u_2)$ (see Lemma \ref{uueequi}).
Then $A(x)$ and $B(x)$ can be viewed as two Peierls barriers for contact Hamiltonian systems.

If $H$ is independent of $u$, the condition ($\diamondsuit$) is reduced to
\[h^\infty(x_1,x_2)=u_2-u_1,\]
where $h^\infty:M\times M\to \mathbb{R}$ is the Peierls barrier in the classical case. Given any two points in the Aubry set in $T^*M$, this condition always holds if we choose a suitable embedding manner from $T^*M$ to $T^*M\times\mathbb{R}$.

In general cases, it is not easy to be verified. Under (H1)-(H3) and (A), we are able to use this criterion to prove the non-wandering property of the strongly static set, for which it suffices to consider the case with $X_1=X_2$. For the case with $X_1\neq X_2$, we consider this criterion in strictly increasing cases (H3'). In Theorem \ref{CONC}, it shows that the criterion can be verified by assuming ``if for each $v_+\in \mathcal{S}_+$, $v_+(x_2)=u_-(x_2)$ implies $v_+(x_1)=u_-(x_1)$". That shows the interplay between weak KAM solutions and the dynamics  around the Aubry set.}
\end{remark}

Let $\tilde{\mathcal{V}}$ be the set of $(x,p,u)\in T^*M\times\R$, for which there exists a strongly static orbit \[(x(\cdot),p(\cdot),u(\cdot)):\mathbb{R}\to T^*M\times\mathbb{R}\] passing through $(x,p,u)$.
Let ${\mathcal{V}}=\pi^* \tilde{\mathcal{V}}$.
By the definition of the strongly static set, \[\tilde{\mathcal{S}}_s=\mathrm{cl}(\tilde{\mathcal{V}}),\quad \mathcal{S}_s=\mathrm{cl}(\mathcal{V}).\]

To prove Lemma \ref{CONC1}, we only need to verify

\begin{lemma}\label{CONCLL1}
Given any $X_1:=(x_1,p_1,u_1)\in
\tilde{\A}$, $X_2:=(x_2,p_2,u_2)\in \tilde{\mathcal{V}}$. If
\begin{equation*}
\lim_{t\to+\infty}h_{x_1,u_1}(x_2,t)=u_2,\ \limsup_{t\to +\infty}h^{x_2,u_2}(x_1,t)=u_1,
\end{equation*}
then {$X_1\rightsquigarrow X_2$. }
\end{lemma}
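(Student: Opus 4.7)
The plan is to construct, for every prescribed $\eps>0$, a single orbit of (\ref{c}) on some interval $[0,T]$ whose initial point lies within $\eps$ of $X_1$ and which later passes within $\eps$ of $X_2$. The candidates will be the phase-space lifts of minimizers realizing $h_{x_1,u_1}(x_2,t_n)$ along a suitable sequence $t_n\to+\infty$; the work lies in verifying that the initial and terminal momenta of these lifts converge to $p_1$ and $p_2$ respectively.

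I would first select $t_n\to+\infty$ along which $h^{x_2,u_2}(x_1,t_n)\to u_1$ (using the second hypothesis), and invoke Proposition \ref{IVP} to produce, for each $n$, a minimizer $\gamma_n:[0,t_n]\to M$ achieving $h_{x_1,u_1}(x_2,t_n)$, together with its phase lift $\Gamma_n(s)=(x_n(s),p_n(s),u_n(s))$. By construction $x_n(0)=x_1$, $u_n(0)=u_1$, $x_n(t_n)=x_2$, and $u_n(t_n)\to u_2$ by the first hypothesis. Superlinearity (H2) yields uniform Lipschitz control on $\gamma_n$, hence a uniform bound on $p_n$ via the Legendre transform; after extraction $p_n(0)\to\bar p_0$ and $p_n(t_n)\to\bar p_\infty$. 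Continuous dependence for (\ref{c}) then gives locally uniform convergence of $\Gamma_n$ on $[0,+\infty)$ to an orbit $\Gamma^+$ starting at $(x_1,\bar p_0,u_1)$, and, time-shifted, of $\Gamma_n(\cdot+t_n)$ on $(-\infty,0]$ to an orbit $\Gamma^-$ ending at $(x_2,\bar p_\infty,u_2)$; passing to the limit in Definition \ref{gm} shows both $\Gamma^\pm$ are globally minimizing on their respective half-lines.

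The heart of the argument is identifying $\bar p_0=p_1$ and $\bar p_\infty=p_2$. For the terminal momentum, fix a strongly static orbit $\tau$ through $X_2$ with $\tau(0)=X_2$ (available since $X_2\in\tilde{\mathcal{V}}$), and consider the concatenation of $\Gamma^-$ on $(-\infty,0]$ with $\tau$ on $[0,+\infty)$ in the $(x,u)$-variables. Using Proposition \ref{pri1}(2),(4) together with the supremum characterisation of $h^{x_2,u_2}(x_1,t_n)$ and the limsup hypothesis, one verifies that this concatenated curve is globally minimizing on $\R$ and satisfies the sup characterisation \eqref{3-399x} for every pair $t_1<t_2$; hence it is strongly static, so its phase lift is a $C^1$ solution of (\ref{c}) by Proposition \ref{IVP} and \cite[Prop.~3.1]{WWY2}. $C^1$ regularity at the junction $t=0$ forces $\bar p_\infty=p_2$. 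The symmetric argument at the $x_1$-end uses the static orbit through $X_1$ (available since $X_1\in\tilde{\mathcal{A}}$) together with \eqref{3-399} and the first hypothesis to yield $\bar p_0=p_1$. With both identifications in hand, for every $\eps>0$ all sufficiently large $n$ satisfy $\Gamma_n(0)\in B(X_1,\eps)$ and $\Gamma_n(t_n)\in B(X_2,\eps)$, so $\Gamma_n$ is the desired transitive orbit.

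The principal obstacle is the concatenation/identification step just described: one must rigorously transport the variational characterisations from $\Gamma^\pm$ across the gluing points $(x_2,u_2)$ and $(x_1,u_1)$, which is delicate because globally minimizing, semi-static, static and strongly static are nested but strictly distinct notions in the contact setting under (H1)--(H3). The reason the hypothesis $X_2\in\tilde{\mathcal{V}}$ is required on the $X_2$-side, rather than the weaker $X_2\in\tilde{\mathcal{A}}$, is precisely that only the sup-limit characterisation \eqref{3-399x} can be propagated through the limit $t_n\to\infty$ using the limsup in the second hypothesis.
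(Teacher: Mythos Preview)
Your overall architecture---build connecting orbits from action minimizers $\gamma_n$ on $[0,t_n]$, pass to half-line limits $\Gamma^\pm$, then identify the endpoint momenta with $p_1,p_2$---is exactly the paper's. The genuine gap lies in the ``heart of the argument'': you assert that the concatenation of $\Gamma^-$ with the strongly static orbit $\tau$ through $X_2$ ``is globally minimizing on $\R$ and satisfies the sup characterisation \eqref{3-399x} for every pair $t_1<t_2$'', but this is precisely the step that does all the work, and your proposal gives no mechanism for it. Passing to the limit in Definition~\ref{gm} only shows $\Gamma^\pm$ are \emph{one-sided minimizing}; upgrading this to one-sided \emph{semi-static} is not automatic and is exactly where the two hypotheses $(\diamondsuit)$ must be used. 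In the paper this upgrade is done by separate contradiction arguments (one per endpoint): assuming the semi-static identity fails at some time $t_0$ and exploiting the Markov property plus Proposition~\ref{relation} to push the violation out to $t_n-t_0+\tau_0\to\infty$, contradicting the relevant limit in $(\diamondsuit)$. Without this, your concatenation cannot be shown to be a minimizer, hence you get no $C^1$ regularity at the junction and no conclusion $\bar p_\infty=p_2$. (Incidentally, \eqref{3-399x} for $t_1<t_2$ only yields semi-static, not strongly static; see Remark~\ref{rcpps}. Semi-static is enough for the $C^1$ conclusion, but the terminology should match.)

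There is also a coherence issue in your setup. You take $\gamma_n$ to be minimizers of $h_{x_1,u_1}(x_2,t_n)$, so along them $u_n(s)=h_{x_1,u_1}(\gamma_n(s),s)$ and $u_n(t_n)\to u_2$ but is not equal to $u_2$. You then invoke ``the supremum characterisation of $h^{x_2,u_2}(x_1,t_n)$'', but your curves do not realise $h^{x_2,u_2}$; they realise $h^{x_2,u_n(t_n)}$, and relating this to $h^{x_2,u_2}$ uniformly in $t_n$ is delicate because of Proposition~\ref{relation}(2). The paper avoids this by taking $\gamma_n$ to be the minimizers of $h^{x_2,u_2}(x_1,t_n)$ along the subsequence realising the $\limsup$, so that $u_n(t_n)=u_2$ exactly and the Markov property of $h^{x_2,u_2}$ is directly available for the contradiction at the $x_1$-end; the first hypothesis is then used symmetrically at the $x_2$-end. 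Once the one-sided semi-staticity of $\Gamma^\pm$ is established, the momentum identification follows from Proposition~\ref{lem3.2x} (whose proof is the concatenation argument you sketch, but which takes semi-staticity as an \emph{input}, not an output).
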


We give a proof that Lemma \ref{CONCLL1} implies Lemma \ref{CONC1}.
%
%
\begin{proof}
Let $B(X,R)$ stand for the open  ball on $T^*M\times\R$ centered at $X$ with radius $R$, and let $\bar B(X,R)$ stand for its closure.

Given any $X_2:=(x_2,p_2,u_2)\in \tilde{\cS}_s$. For any neighborhood   $U$ of $X_2$, one can find $R>0$ such that $\bar B(X_2,R)\subset U$. Note that $\tilde{\mathcal{S}}_s=\mathrm{cl}(\tilde{\mathcal{V}})$. Thus, there exists a sequence $\{Z_n\}_{n\in \mathbb{N}}\subseteq \tilde{\mathcal{V}}$ such that
  \[ Z_n\to X_2,\quad n\to \infty.\]
Hence,  there exists $N:=N(R)>0$ such that
\[d(X_2,Z_N)\leq \frac{R}{4},\]
which implies
\[B\left(Z_N,\frac{R}{4}\right)\subset B(X_2,R).\]
By Definition \ref{ccnnet}, the existence of the transitive orbit from $X_1$ to $Z_N$ implies $X_1\rightsquigarrow X_2$.
\end{proof}

The following lemma gives a way to obtain one-sided semi-static curves from one-sided minimizing curves.
\begin{lemma}\label{mattt}
\
\begin{itemize}
\item [(1)]
Given $(x_0,u_0)\in M\times\R$, let $({x}(\cdot),{u}(\cdot)):\mathbb{R}_+\to M\times\mathbb{R}$ be a positively minimizing curve with $(x(0),u(0))=(x_0,u_0)$. If for each $t\geq 0$,
\begin{equation}
{u}(t)=\inf_{\tau>0}h_{x_0,u_0}({x}(t),\tau).
\end{equation}
Then for any $t_1$, $t_2\in\mathbb{R}_+$ with $t_1\leq t_2$, there holds
\begin{equation}\label{xxyymmtt}
	{u}(t_2)=\inf_{\tau>0}h_{{x}(t_1),{u}(t_1)}({x}(t_2),\tau).
	\end{equation}
\item [(2)]
Given $(x_0,u_0)\in M\times\R$, let $({x}(\cdot),{u}(\cdot)):\mathbb{R}_-\to M\times\mathbb{R}$ be a negatively minimizing curve with $(x(0),u(0))=(x_0,u_0)$. If for each $t\geq 0$,
\begin{equation}
{u}(-t)=\sup_{\tau>0}h^{x_0,u_0}({x}(-t),\tau).
\end{equation}
Then for any $t_1$, $t_2\in\mathbb{R}_+$ with $t_1\geq t_2$, there holds
\begin{equation}
	{u}(-t_1)=\sup_{\tau>0}h^{{x}(-t_2),{u}(-t_2)}({x}(-t_1),\tau).
	\end{equation}
\end{itemize}
\end{lemma}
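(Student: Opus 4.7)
The plan is to reduce both claims to straightforward applications of the Markov property of the action functions (Propositions \ref{pri}(4) and \ref{pri1}(4)) together with the minimizing hypothesis on the curve. In both parts, one inequality is immediate from the very definition of positively/negatively minimizing, and the reverse inequality is obtained by a short contradiction argument. Conceptually, the lemma says that once the value of $u$ along the curve coincides with the infimum (resp.\ supremum) of the action function based at $(x_0,u_0)$, the ``minimizing-from-the-start'' property propagates so that the action function based at any later (resp.\ earlier) point on the curve realises its own infimum (resp.\ supremum) at the correct time.

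For part (1), I first apply the definition of positively minimizing with base time $0$ to get $u(t)=h_{x_0,u_0}(x(t),t)$ for every $t\ge 0$, and on $[t_1,t_2]$ to get $u(t_2)=h_{x(t_1),u(t_1)}(x(t_2),t_2-t_1)$; in particular $\inf_{\tau>0}h_{x(t_1),u(t_1)}(x(t_2),\tau)\le u(t_2)$. Suppose, for contradiction, that this inequality were strict, witnessed by some $s>0$ with $h_{x(t_1),u(t_1)}(x(t_2),s)<u(t_2)$. Substituting $u(t_1)=h_{x_0,u_0}(x(t_1),t_1)$ into the Markov identity
\[
h_{x_0,u_0}(x(t_2),t_1+s)=\inf_{y\in M}h_{y,h_{x_0,u_0}(y,t_1)}(x(t_2),s),
\]
and evaluating the right-hand side at $y=x(t_1)$, would force $h_{x_0,u_0}(x(t_2),t_1+s)<u(t_2)$, contradicting the hypothesis $u(t_2)=\inf_{\tau>0}h_{x_0,u_0}(x(t_2),\tau)$. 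The degenerate case $t_1=0$ holds trivially since then $(x(t_1),u(t_1))=(x_0,u_0)$, which gives (\ref{xxyymmtt}) directly.

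Part (2) proceeds in exactly the same way, with $\inf$ replaced by $\sup$ and the Markov property of $h^{x_0,u_0}$ substituted for that of $h_{x_0,u_0}$; the sign reversals match those encoded in Proposition \ref{pr11} and Remark \ref{rcpps}, so from negatively minimizing one extracts $u(-t)=h^{x_0,u_0}(x(-t),t)$ and proceeds symmetrically. I do not anticipate a substantive obstacle: the argument is essentially bookkeeping once one recognises that the positively/negatively minimizing hypothesis pins $u$ down along the curve as the value of the action function based at $(x_0,u_0)$, after which Markov does all the work.
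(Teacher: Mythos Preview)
Your proposal is correct and follows essentially the same route as the paper. The only difference is cosmetic: where you argue by contradiction (assuming some $s>0$ with $h_{x(t_1),u(t_1)}(x(t_2),s)<u(t_2)$ and deriving a contradiction via Markov), the paper writes the same steps as a direct chain of inequalities
\[
u(t_2)=\inf_{\tau>0}h_{x_0,u_0}(x(t_2),\tau)\leq \inf_{\tau>0}h_{x_0,u_0}(x(t_2),t_1+\tau)\leq \inf_{\tau>0}h_{x(t_1),u(t_1)}(x(t_2),\tau),
\]
using Markov at $y=x(t_1)$ and $h_{x_0,u_0}(x(t_1),t_1)=u(t_1)$ exactly as you do; the easy inequality $u(t_2)\geq\inf_{\tau>0}h_{x(t_1),u(t_1)}(x(t_2),\tau)$ is obtained identically in both.
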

\begin{proof}
We only prove Item (1). Item (2) follows from a similar argument. Since $({x}(\cdot),{u}(\cdot)):\mathbb{R}_+\to M\times\mathbb{R}$ is positively minimizing, then
\begin{equation}\label{9uu}
{u}(t_2)\geq\inf_{\tau>0}h_{{x}(t_1),{u}(t_1)}({x}(t_2),\tau), \quad \forall\ 0\leq t_1<t_2.
\end{equation}
By assumption, for each $t\geq 0$,
\begin{equation}
{u}(t)=\inf_{\tau>0}h_{x_0,u_0}({x}(t),\tau).
\end{equation}
It follows that
\[{u}(t_1)=\inf_{\tau>0}h_{x_0,u_0}({x}(t_1),\tau),\quad {u}(t_2)=\inf_{\tau>0}h_{x_0,u_0}({x}(t_2),\tau),\]
which gives rise to
\begin{align*}
{u}(t_2)&=\inf_{\tau>0}h_{x_0,u_0}({x}(t_2),\tau)\leq \inf_{\tau>0}h_{x_0,u_0}({x}(t_2),t_1+\tau)\\
&\leq \inf_{\tau>0}h_{{x}(t_1),h_{x_0,u_0}({x}(t_1),t_1)}({x}(t_2),\tau)\\
&=\inf_{\tau>0}h_{{x}(t_1),{u}(t_1)}({x}(t_2),\tau).
\end{align*}
Combining with (\ref{9uu}), we get (\ref{xxyymmtt}).
\end{proof}

The following proposition shows that for certain minimizing orbits $(x(\cdot),p(\cdot),u(\cdot)):\R\ri T^*M\times\R$, $p(t)$ is uniquely determined by $(x(t),u(t))$ for all $t\in\R$. { Let $\rho: \tilde{\mathcal{V}}\to M\times\R$. This result implies $\rho$ is injective, which gives rise to the  graph property of $\tilde{\mathcal{V}}$ on $M\times\R$.} Its proof is postponed in \ref{pui}.
\begin{proposition}\label{lem3.2x}
	If $(x,p_0,u)\in {\tilde{\mathcal{V}}}$, $(x,p_+,u)\in \tilde{\N}^+$ (resp. $(x,p_-,u)\in \tilde{\N}^-$),   then $p_0=p_+$ (resp. $p_0=p_-$).
\end{proposition}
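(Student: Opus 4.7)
The plan is to compare the two orbits via their associated weak KAM solutions and then invoke the standard semiconcave/semiconvex touching principle at the common base point $x$.

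I would first observe that a strongly static orbit is in particular semi-static: applying Proposition \ref{pr11} to the supremum identity of Definition \ref{stcontx} at a pair $(t_1,t_2)$ with $t_1>t_2$, after swapping the indices, recovers the forward semi-static identity of Definition \ref{semdepp}. Proposition \ref{pr119955} then produces $v_+\in\cS_+$ with $u_0(t)=v_+(x_0(t))$ for all $t\in\R$; semiconvexity of $v_+$ places $p_0$ in its subdifferential at $x$. Symmetrically, the positively semi-static orbit $(y_+(\cdot),q_+(\cdot),w_+(\cdot))$ through $(x,p_+,u)$ yields, via a one-sided analogue of Proposition \ref{pr119955} (constructible through the Lax--Oleinik characterization of Propositions \ref{pr4.2} and \ref{pr4.5}), a backward weak KAM solution $v_-\in\cS_-$ with $v_-(y_+(t))=w_+(t)$ for $t\geq 0$; semiconcavity then puts $p_+$ in the superdifferential of $v_-$ at $x$. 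In particular $v_+(x)=v_-(x)=u$.

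The crux is to establish the local one-sided inequality $v_+\leq v_-$ in a neighborhood of $x$. For this I would read the defining supremum in Definition \ref{stcontx} through Proposition \ref{pri1}(2): the identity $u_0(t)=\sup_{s>0}h^{x,u}(x_0(t),s)$ expresses $v_+(x_0(t))$ as the largest initial $u$-value of any CH trajectory terminating at $(x,u)$. Dually, $w_+(t)=\inf_{s>0}h_{x,u}(y_+(t),s)$ together with Proposition \ref{pr4.2} identifies $v_-(y_+(t))$ as the smallest $u$-value reachable from $(x,u)$ in positive time. Combining these extremal characterizations and propagating off the orbits via the monotonicity of the action functions (Propositions \ref{pri}(1) and \ref{pri1}(1)) should yield $v_+(y)\leq v_-(y)$ for $y$ near $x$, with equality at $y=x$.

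Once this comparison is in hand, the semiconvex-below-semiconcave touching lemma forces both $v_+$ and $v_-$ to be differentiable at $x$ with equal gradients, so $p_0=Dv_+(x)=Dv_-(x)=p_+$. The $(x,p_-,u)\in\tilde{\N}^-$ case follows by the time-reversed version of the same argument. The main obstacle will be the local comparison step: under only (H3), arbitrary pairs in $\cS_-\times\cS_+$ need not satisfy a global ordering (distinct weak KAM solutions may cross), so one cannot invoke a generic comparison principle; instead, one must exploit the precise maximality built into the strongly static definition --- which is what distinguishes $\tilde{\mathcal{V}}$ from $\tilde{\A}$ --- to rule out the reverse inequality in a neighborhood of $x$.
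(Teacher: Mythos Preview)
Your approach is conceptually natural but differs completely from the paper's and carries a real gap at the comparison step.

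The paper never invokes weak KAM solutions or the semiconcave/semiconvex touching lemma. It argues directly with the action functions. Writing $(x_1(\cdot),u_1(\cdot))$ for the strongly static curve through $(x,u)$ and $(x_2(\cdot),u_2(\cdot))$ for the positively semi-static one, it fixes $\delta>0$ and uses the Markov property to get
\[
h_{x_1(-\delta),u_1(-\delta)}(x_2(\delta),2\delta)\ \le\ h_{x,u}(x_2(\delta),\delta).
\]
Equality is then forced by a short contradiction combining the static identity $u_1(-\delta)=\inf_{s>0}h_{x,u}(x_1(-\delta),s)$ (this ``backward'' instance, with $t_1=0>t_2=-\delta$, is exactly where the strongly static hypothesis is used) with the positive semi-static identity $u_2(\delta)=\inf_{\tau>0}h_{x,u}(x_2(\delta),\tau)$. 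Equality means the concatenation $x_1|_{[-\delta,0]}\ast x_2|_{[0,\delta]}$ is itself a minimizer of $h_{x_1(-\delta),u_1(-\delta)}(x_2(\delta),2\delta)$, hence $C^1$ at the junction, and therefore $p_0=p_+$. This works under the weaker hypothesis $|\partial_uH|\le\lambda$, not only under (H3).

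The gap in your plan is Step 4. The semigroup identities run the wrong way for the comparison you want: from $v_+=T_t^+v_+$ one obtains $v_+(y)\ge h^{x,u}(y,t)$ for every $t>0$, and from $v_-=T_t^-v_-$ one obtains $v_-(y)\le h_{x,u}(y,t)$; these are a \emph{lower} bound on $v_+$ and an \emph{upper} bound on $v_-$, and cannot be chained into $v_+\le v_-$. Moreover, the extremal identities you quote hold only along the two \emph{distinct} orbits $x_0(\cdot)$ and $y_+(\cdot)$, and monotonicity of $h_{\cdot,\cdot}$, $h^{\cdot,\cdot}$ in the $u_0$-argument does not transport them to a neighborhood of $x$. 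Even in the $u$-independent case an arbitrary pair $(v_+,v_-)\in\cS_+\times\cS_-$ agreeing at one Aubry point need not be ordered nearby unless they are conjugate; you have not produced any linkage between the particular $v_+$ coming from the strongly static orbit and the particular $v_-$ attached to the merely positively semi-static one. Your Step 2 is also not free: Proposition~\ref{pr119955} is stated for globally semi-static curves, and under $|\partial_uH|\le\lambda$ alone it is not clear that a positively semi-static orbit is calibrated by any element of $\cS_-$.
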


Under the assumptions (H1)-(H3), by the definitions of $h_{x_0,u_0}(x,t)$ and $h^{x_0,u_0}(x,t)$, we have
\begin{proposition}\label{relation}Given $(x_0,x,t)\in M\times M\times (0,+\infty)$, $u,v\in \R$.
\begin{itemize}
\item [(1)] for all $u$, $v\in\mathbb{R}$ and all $(x,t)\in M\times (0,+\infty)$, $|h_{x_0,u}(x,t)-h_{x_0,v}(x,t)|\leq |u-v|$;
\item [(2)] if $u\geq v$, then $h^{x_0,u}(x,t)-h^{x_0,v}(x,t)\geq u-v$.
\end{itemize}
\end{proposition}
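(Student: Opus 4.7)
The proof will be a direct comparison argument resting on the implicit variational principle of \cite{WWY} (recalled in Propositions \ref{IVP} and \ref{pri1}(2)) combined with the non-increasing dependence of $L$ on $u$ supplied by (L3). The structural fact I plan to invoke is that along \emph{any} Lipschitz admissible curve $\gamma:[0,t]\to M$ from $x_0$ to $x$, the scalar ODE $y'(s)=L(\gamma(s),\dot\gamma(s),y(s))$ with initial datum $y(0)=u_0$ produces a candidate value $y(t)$ which bounds $h_{x_0,u_0}(x,t)$ from above, with equality whenever $\gamma$ is a minimizer; dually, for $h^{x_0,u_0}$ the same ODE with terminal datum $y(t)=u_0$ produces a lower bound on $h^{x_0,u_0}(x,t)$, with equality along a maximizer. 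These two assertions are consequences of the infimum (resp.\ supremum) characterizations in Proposition \ref{pri}(2) and Proposition \ref{pri1}(2), together with the fact (noted after Proposition \ref{IVP}) that $\dot u=L(x,\dot x,u)$ along every orbit of (\ref{c}).

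For part (1), assume $u>v$ (otherwise swap the roles). I would pick a minimizer $\gamma_v$ realizing $h_{x_0,v}(x,t)$ and let $c(s),d(s)$ solve the common ODE $z'(s)=L(\gamma_v(s),\dot\gamma_v(s),z(s))$ with $c(0)=v$, $d(0)=u$ respectively. Then $c(t)=h_{x_0,v}(x,t)$ is exact while the variational inequality gives only $h_{x_0,u}(x,t)\leq d(t)$. Since $d(0)>c(0)$, ODE uniqueness forces $d>c$ throughout, and (L3) gives $(d-c)'(s)=L(\cdot,d)-L(\cdot,c)\leq 0$, so $d-c$ is nonincreasing. Consequently $h_{x_0,u}(x,t)-h_{x_0,v}(x,t)\leq d(t)-c(t)\leq d(0)-c(0)=u-v$. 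The reverse inequality $h_{x_0,u}(x,t)\geq h_{x_0,v}(x,t)$ is Proposition \ref{pri}(1), and together these give the absolute value bound.

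Part (2) is the time-reversed mirror image. I would pick a maximizer $\gamma_v$ for $h^{x_0,v}(x,t)$ with $\gamma_v(0)=x$, $\gamma_v(t)=x_0$, and again solve the common ODE along $\gamma_v$, this time with \emph{terminal} values $c(t)=v$ and $d(t)=u$. Then $c(0)=h^{x_0,v}(x,t)$ while the supremum characterization gives $h^{x_0,u}(x,t)\geq d(0)$. The same Gronwall-type step shows $d-c$ is nonincreasing, but because $s=t$ is now the reference point the inequality flips direction in time: $d(0)-c(0)\geq d(t)-c(t)=u-v$. Hence $h^{x_0,u}(x,t)-h^{x_0,v}(x,t)\geq u-v$.

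I do not anticipate a serious obstacle; the proof is a short two-step routine and no delicate analysis is required beyond citing the variational principle. The only bookkeeping subtlety worth flagging is that one must test against a minimizer (resp.\ maximizer) of the function being evaluated \emph{exactly}, while invoking the one-sided variational inequality for the function being merely \emph{bounded}; the asymmetry of the conclusions in (1) and (2) is just the sign flip reflecting the infimum versus supremum in the definitions of $h_{x_0,u_0}$ and $h^{x_0,u_0}$.
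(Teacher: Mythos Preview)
The paper does not prove this proposition---it simply records it as a consequence of the definitions under (H1)--(H3)---so there is no proof to compare your route against. Your comparison-along-a-minimizer strategy is the right idea and the overall argument is sound, but the justification you give for one key step does not work as written. The issue is the claim that the ODE solution $d$ along $\gamma_v$ with $d(0)=u$ satisfies $h_{x_0,u}(x,t)\leq d(t)$, which you attribute to Proposition~\ref{pri}(2). That proposition expresses $h_{x_0,u}(x,t)$ as an infimum over \emph{solutions of} (\ref{c}), but $(\gamma_v,d)$ is in general \emph{not} the $(x,u)$-component of such a solution: $\gamma_v$ arose from a (\ref{c}) orbit with $u$-component $c$ (initial value $v$), and once you replace $c$ by $d$ the $\dot p$-equation no longer holds, so the triple is not in $S^{x,t}_{x_0,u}$. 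The same objection applies to your use of Proposition~\ref{pri1}(2) in part~(2).

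Two easy repairs are available. One is to cite the Herglotz variational formulation (see \cite{CCJWY,LTW}, alluded to at the start of Section~\ref{apA}), which is exactly the statement $h_{x_0,u_0}(x,t)=\inf_\gamma y_\gamma(t)$ over all Lipschitz curves that you need. The more self-contained repair is to discard the auxiliary $d$ and compare $c(s)=h_{x_0,v}(\gamma_v(s),s)$ directly with $F(s):=h_{x_0,u}(\gamma_v(s),s)$: monotonicity (Proposition~\ref{pri}(1)) gives $F\geq c$; testing $\gamma_v|_{[0,s]}$ in the implicit formula (\ref{baacf1}) for $h_{x_0,u}$ gives $F(s)\leq u+\int_0^s L(\gamma_v,\dot\gamma_v,F)\,d\tau$; subtracting the exact identity $c(s)=v+\int_0^s L(\gamma_v,\dot\gamma_v,c)\,d\tau$ and invoking (L3) together with $F\geq c$ yields $F(s)-c(s)\leq u-v$ for all $s$, in particular at $s=t$. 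Part~(2) is handled symmetrically from (\ref{2-3}).
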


\noindent{\it Proof of Lemma \ref{CONCLL1}.}
Let $\{t_n\}_{n\in \mathbb{N}}$ be a sequence satisfying
\[\lim_{t_n\to+\infty} h^{x_2,u_2}(x_1,t_n)=u_1.\]
Let $\gm_n:[0,t_n]\ri M$ be a curve $\gm_n(0)=x_1$ and $\gm_n(t_n)=x_2$ such that
\begin{equation}\label{schu}
 h^{x_2,u_2}(x_1,t_n)=u_2-\int_0^{t_n}L\big(\gamma_n(\tau),\dot{\gamma}_n(\tau),h^{x_2,u_2}(\gamma_n(\tau),t_n-\tau)\big)d\tau.
 \end{equation}
  Let $u_n(t):=h^{x_2,u_2}(\gm_n(t),t_n-t)$. Then $u_n(t_n)=u_2$ and $u_n(0)=h^{x_2,u_2}(x_1,t_n)$. Let
 \[p_n(t):=\frac{\partial L}{\partial \dot{x}}(\gm_n(t),\dot{\gm}_n(t),u_n(t)).\]
 We claim that there exists $C>0$ such that
\[\|p_n(0)\|_{x_1}\leq C,\quad \|p_n(t_n)\|_{x_2}\leq C.\]
In fact, since $u_n(0)\ri u_1$ as $n\ri +\infty$, combining with the compactness of $M$, then \[u_n(1)=h^{x_2,u_2}(\gm_n(1),t_n-1)=h_{x_1,u_n(0)}(\gm_n(1),1)\] is bounded {independently on} $n$.  Similarly, $u_n(t_n-1)=h^{x_2,u_2}(\gm_n(t_n-1),1)$  are bounded independent of $n$. Note that $u_n(t_n)=u_2$. Then  one can find $C>0$ such that both $p_n(0)$ and $p_n(t_n)$ is bounded by $C$ (see \cite[Appendix, proof of Lemma 2.1]{WWY1} for details).

We assume, up to a subsequence, \[(\gm_n(0),p_n(0),u_n(0))\ri (x_1, p'_1,u_1),\quad (\gm_n(t_n),p_n(t_n),u_n(t_n))\ri (x_2,p'_2,u_2).\]
In order to prove Lemma \ref{CONCLL1}, it suffices to verify
\[p'_1=p_1,\quad p'_2=p_2.\]

First of all, we prove $p'_1=p_1$. By Proposition \ref{lem3.2x}, we only need to show $(x_1,p'_1,u_1)\in \tilde{\N}^+$. Let
\[(\bar{x}(t),\bar{p}(t),\bar{u}(t)):=\Phi_t(x_1,p'_1,u_1),\quad \forall t\geq 0.\]
By the definition of $\tilde{\N}^+$, we need to show that
 \begin{itemize}
 \item [(1)] the curve $(\bar{x}(\cdot),\bar{u}(\cdot)):\mathbb{R}_+\to M\times\mathbb{R}$ is positively minimizing;
  \item [(2)] for any $t_1$, $t_2\in\mathbb{R}_+$ with $t_1\leq t_2$, there holds
\begin{equation}\label{xxyymmtt11}
	\bar{u}(t_2)=\inf_{\tau>0}h_{\bar{x}(t_1),\bar{u}(t_1)}(\bar{x}(t_2),\tau).
	\end{equation}
\end{itemize}
Since $\gm_n:[0,t_n]\ri M$ satisfies (\ref{schu}), by the definition of $u_n(t)$, we have
\[u_n(t_1)=h^{\gm_n(t_2),u_n(t_2)}(\gm_n(t_1),t_2-t_1), \quad \forall 0\leq t_1<t_2\leq t_n.\]
By the continuous dependence of  solutions to ODEs on the initial data,
\[(\gm_n(t_1),u_n(t_1))\to (\bar{x}(t_1),\bar{u}(t_1)), \quad (\gm_n(t_2),u_n(t_2))\to (\bar{x}(t_2),\bar{u}(t_2)),\]
which combining with the  Lipschitz continuity of $(x_0,u_0,x)\mapsto h^{\cdot,\cdot}(\cdot,t_2-t_1)$ yields
\[\bar{u}(t_1)=h^{\bar{x}(t_2),\bar{u}(t_2)}(\bar{x}(t_1),t_2-t_1), \quad \forall 0\leq t_1<t_2<+\infty.\]
By Remark \ref{rcpps}, $(\bar{x}(\cdot),\bar{u}(\cdot)):\mathbb{R}_+\to M\times\mathbb{R}$ is positively minimizing. In order to verify Item (2), by Lemma \ref{mattt}(1), we need to prove
that for each $t\geq 0$,
\begin{equation}\label{cintroprov11}
\bar{u}(t)=\inf_{\tau>0}h_{x_1,u_1}(\bar{x}(t),\tau).
\end{equation}
By definition, we have
 \[\bar{u}(t)\geq\inf_{\tau>0}h_{x_1,u_1}(\bar{x}(t),\tau).\]
 It suffices to show
 \[\bar{u}(t)\leq\inf_{\tau>0}h_{x_1,u_1}(\bar{x}(t),\tau).\]
 By contradiction, we assume there exist $t_0,\tau_0>0$ such that
\[\bar{u}(t_0)>h_{x_1,u_1}(\bar{x}(t_0),\tau_0).\]
By Proposition \ref{relation}(2), one can find $\delta>0$ such that
\begin{equation}\label{sim}
h^{\bar{x}(t_0),\bar{u}(t_0)}(x_1,\tau_0)=u_1+\delta.
\end{equation}
For each $\eps>0$, there exists $n$ large enough such that
\[d((\gm_n(t_0),p_n(t_0),u_n(t_0)),(\bar{x}(t_0),\bar{p}(t_0),\bar{u}(t_0)))<\eps,\]
which  combining with Lipschitz continuity of $(x_0,u_0)\mapsto h^{\cdot,\cdot}(x_1,\tau_0)$  implies
\[h^{\gm_n(t_0),u_n(t_0)}(x_1,\tau_0)\geq u_1+\frac{\delta}{2}.\]
Note that $\gm_n(t_n)=x_2$, $\gm_n(0)=x_1$, $u_n(t_n)=u_2$. It follows that
\begin{align*}
h^{x_2,u_2}(x_1,t_n-t_0+\tau_0)&=h^{\gm_n(t_n),u_n(t_n)}(\gm_n(0),t_n-t_0+\tau_0)\\
&\geq h^{\gm_n(t_0),h^{\gm_n(t_n),u_n(t_n)}(\gm_n(t_0),t_n-t_0)}(\gm_n(0),\tau_0)\\
&=h^{\gm_n(t_0),u_n(t_0)}(\gm_n(0),\tau_0)\\
&\geq u_1+\frac{\delta}{2}.
\end{align*}
By assumption,  $\limsup_{t\to+\infty}h^{x_2,u_2}(x_1,t)= u_1$.
Letting $n\ri+\infty$, we have $u_1>u_1$, which is a contradiction.

Next, we prove $p'_2=p_2$. By Proposition \ref{lem3.2x}, we only need to show $(x_2,p'_2,u_2)\in \tilde{\N}^-$. Let
\[(\tilde{x}(-t),\tilde{p}(-t),\tilde{u}(-t)):=\Phi_{-t}(x_2,p'_2,u_2),\quad \forall 0\leq t<+\infty.\]
We aim to show that
 \begin{itemize}
 \item [(1)] the curve $(\tilde{x}(\cdot),\tilde{u}(\cdot)):\mathbb{R}_-\to M\times\mathbb{R}$ is negatively minimizing;
  \item [(2)] for any $t_1$, $t_2\in\mathbb{R}_+$ with $t_1\geq t_2$, there holds
\begin{equation}\label{xxyymmttwang}
	\tilde{u}(-t_1)=\sup_{\tau>0}h^{\tilde{x}(-t_2),\tilde{u}(-t_2)}(\tilde{x}(-t_1),\tau).
	\end{equation}
\end{itemize}
The proof of Item (1) is similar to the one of $p'_1=p_1$ above.

Let $\eta_n(-t):=\gm_n(-t+t_n)$ for each $t\geq 0$. It follows that  $\eta_n:[-t_n,0]\ri M$ satisfies (\ref{schu}) with $\eta_n(0)=x_2$, $\eta_n(-t_n)=x_1$. Let $v_n(-t):=h^{x_2,u_2}(\eta_n(-t),t)$ for each $t\geq 0$. Then $v_n(0)=u_2$ and
\begin{equation}\label{vn-t}
v_n(-t_1)=h^{\eta_n(-t_2),v_n(-t_2)}(\eta_n(-t_1),t_1-t_2), \quad \forall 0\leq t_2<t_1<t_n.
\end{equation}
By (\ref{vn-t}),
\begin{equation}\label{xxvn-t}
v_n(-t_0)=h_{\eta_n(-t_n),v_n(-t_n)}(\eta_n(-t_0),t_n-t_0).
	\end{equation}
Similar to the proof of (\ref{sim}), we have
\begin{equation}\label{xxyrr-t}
h_{\eta_n(-t_0),v_n(-t_0)}(x_2,\tau_0)\leq u_2-\frac{\delta}{2}.
	\end{equation}
By Proposition \ref{relation}(1),
\begin{equation}\label{566}
\begin{split}
&|h_{\eta_n(-t_0),h_{x_1,u_1}(\eta_n(-t_0),t_n-t_0)}(x_2,\tau_0)- h_{\eta_n(-t_0),h_{x_1,v_n(-t_n)}(\eta_n(-t_0),t_n-t_0)}(x_2,\tau_0)|\\
\leq &|h_{x_1,u_1}(\eta_n(-t_0),t_n-t_0)-h_{x_1,v_n(-t_n)}(\eta_n(-t_0),t_n-t_0)|\\
\leq &|u_1-v_n(-t_n))|.
\end{split}
\end{equation}
Then we have
\begin{align*}
h_{x_1,u_1}(x_2,t_n-t_0+\tau_0)&\leq h_{\eta_n(-t_0),h_{x_1,u_1}(\eta_n(-t_0),t_n-t_0)}(x_2,\tau_0)\\
&\leq h_{\eta_n(-t_0),h_{x_1,v_n(-t_n)}(\eta_n(-t_0),t_n-t_0)}(x_2,\tau_0)+|u_1-v_n(-t_n)|\\
&=h_{\eta_n(-t_0),v_n(-t_0)}(x_2,\tau_0)+|u_1-v_n(-t_n)|\\
&\leq u_2-\frac{\delta}{2}+|u_1-v_n(-t_n)|,
\end{align*}
where the first inequality is from the Markov property of $h_{x_1,u_1}(x,t)$, the second  inequality is owing to (\ref{566}), the first equality is from (\ref{xxvn-t}),  and the last inequality is from (\ref{xxyrr-t}).

Note that $\eta_n(-t_n)=x_1$, $\eta_n(0)=x_2$, $v_n(0)=u_2$.
By assumption, $h^{x_2,u_2}(x_1,t_n)\ri u_1$ as $t_n\ri +\infty$, then $v_n(-t_n)\rightarrow u_1$. It is also assumed that $\lim_{t\to+\infty}h_{x_1,u_1}(x_2,t)=u_2$.
Letting $n\ri+\infty$, we have $u_2<u_2$, which is a contradiction.

This completes the proof of Lemma \ref{CONCLL1}.\EEnd

\vspace{1ex}

\section{Topological dynamics around the Aubry set}\label{se44}
In this section, we are devoted to proving Theorem \ref{raomeome}.
\subsection{Mather set and recurrence}\label{mathre}
 For each $v\in \cS_-$, $\tilde{\mathcal{N}}_{v}$ is a flow invariant subset of $T^*M\times\mathbb{R}$. By Proposition \ref{88996},
  the set $\tilde{\mathcal{N}}_{v}$ is non-empty and compact. Then
 $\tilde{\mathcal{M}}\neq\emptyset$ directly follows from the definition of the Mather set and the assumption (A). Next, we prove
\[\tilde{\mathcal{M}}\subseteq \tilde{\mathcal{N}}\cap \mathrm{cl}(\tilde{\mathcal{R}}).\]
 Note that Mather measures are  invariant Borel probabilities. Let $\mu$ be a Mather measure. By the Poincar\'{e} recurrence theorem,  one can find a set $A\subseteq \tilde{\N}$ of total $\mu$-measure such that if $(x_0,p_0,u_0)\in A$, then there exist $\{t_m\}_{m\in \mathbb{N}}$ and $\{t_n\}_{n\in \mathbb{N}}$   such that
\[d\left((x_0,p_0,u_0),\Phi_{t_m}(x_0,p_0,u_0)\right)\ri 0\quad \text{as}\ \ t_m\ri +\infty,\]
\[d\left((x_0,p_0,u_0),\Phi_{t_n}(x_0,p_0,u_0)\right)\ri 0\quad \text{as}\ \ t_n\ri -\infty.\]
Since $\tilde{\N}$ is closed and $A$ is dense in supp$(\mu)$, then we have
\[\tilde{\mathcal{M}}\subseteq \tilde{\mathcal{N}}\cap \mathrm{cl}(\tilde{\mathcal{R}}).\]

\subsection{Recurrence and strong staticity}\label{recstr}
In this part, we aim to show
\[\tilde{\N}\cap \mathrm{cl}(\tilde{\mathcal{R}})\subseteq \tilde{\mathcal{S}}_s.\]
Let $(x(\cdot),u(\cdot)):\mathbb{R}\to M\times\mathbb{R}$ be a semi-static curve. By definition, for each $t_1\leq t_2$,
\begin{equation}\label{u89890}
	u(t_2)=\inf_{s>0}h_{x(t_1),u(t_1)}(x(t_2),s).
\end{equation}
By Proposition \ref{pr11}, we have
\begin{equation}\label{uhxtt6655}
	u(t_1)=\sup_{s>0}h^{x(t_2),u(t_2)}(x(t_1),s).
\end{equation}
It remains to prove that for each $t_1>t_2$, (\ref{uhxtt6655}) still holds.

By Proposition \ref{pr119955}, there exists $v_-\in \cS_-$ such that $u(t)=v_-(x(t))$ for all $t\in\R$.  If $x(t_1)=x(t_2)$, then \[u(t_1)=v_-(x(t_1))=v_-(x(t_2))=u(t_2),\] for which (\ref{uhxtt6655}) holds for each $t_1>t_2$.

In the following, we prove the case with $x(t_1)\neq x(t_2)$. Note that $t_1>t_2$. Since $(x(\cdot),u(\cdot)):\mathbb{R}\to M\times\mathbb{R}$ is semi-static, we have
	\begin{equation}\label{uhxtt775566}
	\quad u(t_2)=\sup_{s>0}h^{x(t_1),u(t_1)}(x(t_2),s).
\end{equation}
 Let $p(t):=\frac{\partial L}{\partial \dot{x}}(x(t),\dot{x}(t),u(t))$. We assume
\[(x(t_1),p(t_1),u(t_1))\in \tilde{\N}\cap \tilde{\mathcal{R}},\]
Let $\{\tau_n\}_{n\in \mathbb{N}}$ be a sequence such that
\[d\left((x(t_1),p(t_1),u(t_1)),\Phi_{\tau_n}(x(t_1),p(t_1),u(t_1))\right)\ri 0\quad \text{as}\ \tau_n\ri -\infty.\]
 Denote $\Delta:=t_2-t_1$. By (\ref{uhxtt775566}), if  $\tau_n<\Delta$, we have
\[u(t_1+\tau_n)=\sup_{s>0}h^{x(t_2),u(t_2)}(x(t_1+\tau_n),s).\]
Note that  $x(t_1)\neq x(t_2)$. It follows from the Lipschitz continuity of Ma\~{n}\'{e} potentials (see Proposition \ref{immm} below) that
\[u(t_1)=\sup_{s>0}h^{x(t_2),u(t_2)}(x(t_1),s),\]
which together with (\ref{uhxtt775566}) implies
 $(x(\cdot),u(\cdot)):\mathbb{R}\to M\times\mathbb{R}$ is a strongly static curve.
Then $\tilde{\N}\cap \mathrm{cl}(\tilde{\mathcal{R}})\subseteq \tilde{\mathcal{S}}_s$ follows from the closedness of $\tilde{\mathcal{S}}_s$.

\subsection{Strong staticity and non-wandering property}\label{strsnon}

In this part, we are devoted to proving
\[\tilde{\mathcal{S}}_s\subseteq \tilde{\A}\cap \tilde{\Omega}.\]
 Let $(x(\cdot),u(\cdot)):\mathbb{R}\to M\times\mathbb{R}$ be a strongly static curve. Let $p(0):=\frac{\partial L}{\partial \dot{x}}(x(0),\dot{x}(0),u(0))$. Let $Q_0:=(x(0),p(0),u(0))$. By definition, it suffices to prove that for each neighborhood $U_n$ of $Q_0$, there exist $Q_n\in U_n$ and $T_n>0$ such that $\Phi_{T_n}(Q_n)\in U_n$.

By \cite[Theorem 1.4]{SWY}, under (H1)-(H3) and (A),
 the following function
\begin{align}\label{ba}
h_{x_0,u_0}(x,+\infty):=\lim_{t\rightarrow +\infty}h_{x_0,u_0}(x,t),\quad x\in M
\end{align}
is well defined. Moreover, for each $s,t\in \R$,  both $\lim_{\tau\to+\infty}h_{x(s),u(s)}(x(t),\tau)$. Unfortunately, $\lim_{t\rightarrow +\infty}h^{x_0,u_0}(x,t)$ is not always well defined. But it can be proved that  $\limsup_{\tau\ri+\infty}h^{x(s),u(s)}(x(t),\tau)$ is well defined (see Lemma \ref{uueequi} below).

\begin{lemma}\label{lem3.1qq}
Let $(x(\cdot),u(\cdot)):\R\ri M\times\mathbb{R}$ be a semi-static curve, then
\begin{itemize}
\item [(1)] it is static if and only if
\[u(t)=\lim_{\tau\to+\infty}h_{x(s),u(s)}(x(t),\tau),\quad \forall s,\ t\in\mathbb{R};\]
\item [(2)] it is strongly static if and only if
	\[
	u(t)=\limsup_{\tau\ri+\infty}h^{x(s),u(s)}(x(t),\tau),\quad \forall s,\ t\in\mathbb{R}.
	\]
\end{itemize}
\end{lemma}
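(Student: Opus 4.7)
The lemma characterizes static (Item (1)) and strongly static (Item (2)) curves among semi-static ones via the asymptotic behavior of the implicit action functions $h_{\cdot,\cdot}$ and $h^{\cdot,\cdot}$ respectively. The two items have a parallel structure, so my plan is to outline Item (1) in detail and then indicate how Item (2) is obtained by the analogous argument with $(h_{\cdot,\cdot},\inf,\lim)$ replaced by $(h^{\cdot,\cdot},\sup,\limsup)$, invoking Proposition \ref{pri1} in place of Proposition \ref{pri}. The reason $\limsup$ (rather than $\lim$) must be used in Item (2) is that $\lim_{\tau\to+\infty}h^{x_0,u_0}(x,\tau)$ need not exist in general; however along a semi-static orbit the $\limsup$ is finite, which will be provided by the forthcoming Lemma \ref{uueequi}.

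For Item (1) $(\Rightarrow)$: fix $s,t\in\mathbb{R}$. Since the curve is static, $u(t)=\inf_{\tau>0}h_{x(s),u(s)}(x(t),\tau)$, and since the limit $\ell(s,t):=\lim_{\tau\to+\infty}h_{x(s),u(s)}(x(t),\tau)$ exists by (\ref{ba}), we immediately have $u(t)\leq \ell(s,t)$. For the reverse inequality, I would pick, for any $\varepsilon>0$, some $\tau_0>0$ with $h_{x(s),u(s)}(x(t),\tau_0)<u(t)+\varepsilon$ and then combine the Markov property (Proposition \ref{pri}(4)) applied at the intermediate point $y=x(t)$ with the Lipschitz estimate of Proposition \ref{relation}(1):
\[
h_{x(s),u(s)}(x(t),\tau_0+\delta)\leq h_{x(t),h_{x(s),u(s)}(x(t),\tau_0)}(x(t),\delta)\leq h_{x(t),u(t)}(x(t),\delta)+\varepsilon.
\]
Applying staticity with the base point $(x(t),u(t))$ gives $\inf_{\delta>0}h_{x(t),u(t)}(x(t),\delta)=u(t)$. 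An iteration of this estimate (restarting from larger $\tau_0$ on the orbit to push $\tau_0+\delta$ beyond any prescribed bound, while keeping the accumulated error $O(\varepsilon)$) then yields $\ell(s,t)\leq u(t)+2\varepsilon$, and letting $\varepsilon\downarrow 0$ gives $\ell(s,t)=u(t)$.

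For Item (1) $(\Leftarrow)$: semi-staticity already gives $u(t)=\inf_{\tau>0}h_{x(s),u(s)}(x(t),\tau)$ for $s\leq t$, so only $s>t$ has to be addressed. Fix such $s,t$ and let $h(\tau):=h_{x(s),u(s)}(x(t),\tau)$. The hypothesis gives $\lim_{\tau\to+\infty}h(\tau)=u(t)\geq \inf_{\tau>0}h(\tau)$, so what remains is $h(\tau)\geq u(t)$ for every $\tau>0$. I would argue by contradiction: if $h(\tau_0)<u(t)$ for some $\tau_0>0$, the same Markov-plus-Lipschitz propagation as above, together with the strict monotonicity in $u_0$ of Proposition \ref{pri}(1), would give $h(\tau_0+\delta)\leq h_{x(t),h(\tau_0)}(x(t),\delta)<h_{x(t),u(t)}(x(t),\delta)$. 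Using the hypothesis with $s$ replaced by $t$ to evaluate $\lim_\delta h_{x(t),u(t)}(x(t),\delta)=u(t)$ and carefully tracking the strict gap $u(t)-h(\tau_0)>0$ through the iteration, one obtains $\lim_\tau h(\tau)<u(t)$, contradicting the hypothesis.

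Item (2) is handled by the parallel argument, using the Markov and monotonicity properties of $h^{\cdot,\cdot}$ (Proposition \ref{pri1}(4) and Proposition \ref{pri1}(1)), the Lipschitz bound in Proposition \ref{relation}(2), and the existence of $\limsup_{\tau\to+\infty}h^{x(s),u(s)}(x(t),\tau)$ from Lemma \ref{uueequi}. The main obstacle across both items is the $(\Leftarrow)$ direction: converting a pointwise strict inequality at some finite $\tau_0$ into a violation of the asymptotic identity requires showing that the strict gap $u(t)-h(\tau_0)$ is not washed out by the $\tau\to+\infty$ limit. This hinges on combining the $1$-Lipschitz/strict monotonicity of $h_{x_0,u}(x,\tau)$ in $u$ (Propositions \ref{pri}(1) and \ref{relation}(1)) with the fact that the Peierls-barrier-type limit $h_{x(t),u}(x(t),+\infty)$ depends monotonically and continuously on $u$; I expect this to be the most delicate step of the proof.
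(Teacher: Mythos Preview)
Your proposal has genuine gaps in both directions of Item (1), and the paper's proof uses different (and shorter) ideas that avoid them.

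\textbf{Direction $(\Rightarrow)$.} Your iteration, restarting each time at the fixed point $y=x(t)$, does not keep the accumulated error $O(\varepsilon)$. Proposition \ref{relation}(1) is only a non-expansive bound, so each step adds a new error $\varepsilon'$ coming from the choice of $\delta$ with $h_{x(t),u(t)}(x(t),\delta)<u(t)+\varepsilon'$; after $n$ iterations the bound is $u(t)+\varepsilon+n\varepsilon'$, and since the admissible $\delta$ may stay bounded you cannot send $\tau\to\infty$ while keeping the error bounded. The paper instead shifts the \emph{base point along the curve}: because the curve is globally minimizing, $h_{x(s),u(s)}(x(s+n),n)=u(s+n)$ exactly, so the Markov step through $y=x(s+n)$ introduces no error; staticity from $(x(s+n),u(s+n))$ then yields $\sigma_n>0$ with $h_{x(s+n),u(s+n)}(x(t),\sigma_n)<u(t)+\tfrac1n$, hence $h_{x(s),u(s)}(x(t),n+\sigma_n)<u(t)+\tfrac1n$ with $n+\sigma_n\to\infty$.

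\textbf{Direction $(\Leftarrow)$.} The strict gap $\eta=u(t)-h(\tau_0)>0$ need not survive the limit $\tau\to\infty$. Under (H3) the map $u_0\mapsto h_{x_0,u_0}(x,+\infty)$ is monotone and $1$-Lipschitz but not strictly increasing: from $h(\tau_0+\delta)\leq h_{x(t),u(t)-\eta}(x(t),\delta)$ one gets in the limit only $u(t)\leq h_{x(t),u(t)-\eta}(x(t),+\infty)\leq u(t)$, which is no contradiction. The paper bypasses this by invoking Proposition \ref{pr119955}: the semi-static curve is calibrated by some $v_-\in\cS_-$, i.e.\ $u(t)=v_-(x(t))$ for all $t$. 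Since $v_-=T^-_\tau v_-$, Propositions \ref{pr4.2} and \ref{pr4.5} give directly
\[
u(t)=v_-(x(t))=T^-_\tau v_-(x(t))=\inf_{y\in M}h_{y,v_-(y)}(x(t),\tau)\leq h_{x(s),u(s)}(x(t),\tau)
\]
for every $\tau>0$, so $u(t)\leq\inf_{\tau>0}h_{x(s),u(s)}(x(t),\tau)$; combined with $\inf\leq\lim=u(t)$ from the hypothesis, this is staticity. The analogous argument for Item (2) uses the $v_+\in\cS_+$ calibration from Proposition \ref{pr119955}.
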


\begin{proof} We only prove Item (1). Item (2) follows a similar argument.
By  definition, we have
\[u(t)=\inf_{\tau>0}h_{x(s),u(s)}(x(t),\tau)\leq \lim_{\tau\to+\infty}h_{x(s),u(s)}(x(t),\tau),\quad \forall s,\ t\in\mathbb{R}.\]

	On the other hand, for each $n\in\mathbb{N}$, we get
	\[
	u(t)=\inf_{\sigma>0}h_{x(s+n),u(s+n)}(x(t),\sigma).
	\]
	There is a sequence $\{\sigma_n\}\subset\mathbb{R}_+$ such that
	\[
	h_{x(s+n),u(s+n)}(x(t),\sigma_n)<u(t)+\frac{1}{n},
	\]
	which together with the Markov  property implies
	\begin{align*}
	h_{x(s),u(s)}(x(t),n+\sigma_n)&\leq h_{x(s+n),h_{x(s),u(s)}(x(s+n),n)}(x(t),\sigma_n)\\
&=h_{x(s+n),u(s+n)}(x(t),\sigma_n)\\
&<u(t)+\frac{1}{n}.
	\end{align*}
	Let $n\to+\infty$. Then
	\[
	u(t) \geq \lim_{\tau\ri+\infty}h_{x(s),u(s)}(x(t),\tau).
	\]
Then we have
\[
	u(t)=\lim_{\tau\ri+\infty}h_{x(s),u(s)}(x(t),\tau),\quad \forall s,\ t\in\mathbb{R}.
	\]

Conversely, if \[u(t)=\lim_{\tau\to+\infty}h_{x(s),u(s)}(x(t),\tau),\quad \forall s,\ t\in\mathbb{R},\]
then
\[u(t)\geq \inf_{\tau>0}h_{x(s),u(s)}(x(t),\tau).\]
Note that $(x(\cdot),u(\cdot)):\R\ri M\times\mathbb{R}$ is  semi-static. By Proposition \ref{pr119955}, there exists $v_-\in \cS_-$   such that $u(t)=v_-(x(t))$ for each $t\in \R$. Combining Proposition \ref{pr4.2} and Proposition \ref{pr4.5}, we have
\begin{align*}
u(t)&=v_-(x(t))=T_\tau^-v_-(x(t))=\inf_{y\in M}h_{y,v_-(y)}(x(t),\tau)\\
&\leq h_{x(s),v_-(x(s))}(x(t),\tau)=h_{x(s),u(s)}(x(t),\tau),\quad \forall \tau>0,
\end{align*}
which implies
\[u(t)\leq \inf_{\tau>0}h_{x(s),u(s)}(x(t),\tau).\]
This completes the proof of Lemma \ref{lem3.1qq}.
\end{proof}

Let $x_0:=x(0)$, $u_0:=u(0)$. By Lemma \ref{lem3.1qq},  \[\lim_{t\to+\infty}h_{x_0,u_0}(x_0,t)=u_0,\ \limsup_{t\to +\infty}h^{x_0,u_0}(x_0,t)=u_0.\]
Then  $\tilde{\mathcal{S}}_s\subseteq \tilde{\A}\cap \tilde{\Omega}$ follows from Lemma \ref{CONC1}.

\begin{remark}\label{remk4}
The Aubry set  in the classical case is defined in $T^*M$ instead of $T^*M\times\R$. From weak KAM point of view (\cite[Theorem 5.2.8]{Fat-b}), there exists a conjugate pair $(u_-,u_+)$ such that the Aubry set is represented as
\[\tilde{\mathcal{I}}_{(u_-,u_+)}:=\{(x,p)\in T^*M\ |\ u_-(x)=u_+(x),\ p=Du_-(x)\}.\]
 One can embed this set from $T^*M$ into $T^*M\times\R$ by adding the $u$-argument to get the {embedded} Aubry set denoted by $\tilde{\mathcal{A}}_e$ in the following way.
\[\tilde{\mathcal{A}}_e:=\{(x,p,u)\in T^*M\times\R\ |\ (x,p)\in \tilde{\mathcal{I}}_{(u_-,u_+)},\ u=u_-(x).\}\]
Note that $\tilde{\mathcal{A}}_e\subset\tilde{\mathcal{A}}=\tilde{\cS}_s$ in the classical case. From Theorem \ref{raomeome}, the {embedded} Aubry set is non-wandering  in $T^*M\times\R$ in classical cases. Moreover, the non-wandering set in $T^*M\times\R$ is also an {embedded} Aubry set by choosing certain conjugate pair (see \cite[Theorem 1.5]{L}). It means that the embedded Aubry set on $T^*M\times\R$ is the same as the non-wandering set generated by
\begin{align}\label{1x}
\left\{
        \begin{array}{l}
        \dot{x}=\frac{\partial H}{\partial p}(x,p),\\
        \dot{p}=-\frac{\partial H}{\partial x}(x,p),\\
        \dot{u}=\frac{\partial H}{\partial p}(x,p)\cdot p-H(x,p).
         \end{array}
         \right.
\end{align}
 Let $\Pi^*:T^*M\times\R\to T^*M$ be the standard projection. By definition, the projection of the non-wandering set  generated by (\ref{1x})  is exactly the Aubry set in the classical case. From this point of view, we gives a description for the Aubry set in the classical case without using action minimizing property.
\end{remark}

\vspace{1em}

\section{Strictly increasing case}\label{se55}
In this section, we consider the cases under (H1), (H2), (H3') and (A).
\subsection{The structure of $\cS_+$}
{We give a proof} of Theorem \ref{pooo} in this part.
It was shown by \cite[Proposition 12]{WWY3} that
\begin{proposition}\label{W1}
All of elements in $\mathcal S_+$ are uniformly bounded and equi-Lipschitz continuous.
\end{proposition}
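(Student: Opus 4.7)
The plan is to establish the two statements in three linked steps: first a uniform upper bound via comparison with the (unique) backward weak KAM solution $u_-$, then equi-Lipschitz continuity extracted from the forward calibrated curves, and finally a uniform lower bound obtained by combining the equi-Lipschitz estimate with the fact that every $v_+\in\cS_+$ coincides with $u_-$ at some point of $M$. Under (H3') the strong comparison principle holds for (\ref{hj}) and $\cS_-=\{u_-\}$ is a singleton. Each $v_+\in\cS_+$ is Lipschitz and, by Definition \ref{bwkam}(i), is dominated by $L$; hence $H(x,Dv_+(x),v_+(x))\leq 0$ at almost every differentiability point, which under (H1) makes $v_+$ a viscosity subsolution of (\ref{hj}). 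Comparison then yields $v_+\leq u_-$ on $M$, giving the uniform upper bound $\sup_{v_+\in\cS_+}\sup_{M}v_+\leq \max_M u_-$.

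For equi-Lipschitz continuity, I would fix $v_+\in\cS_+$ and $x\in M$ and use the forward calibrated curve $\gamma_x:[0,+\infty)\to M$ from Definition \ref{bwkam}. By Proposition \ref{IVP}, the lifted curve $(\gamma_x(t),p(t),v_+(\gamma_x(t)))$ is an orbit of (\ref{c}) that lies entirely on $\{H=0\}$. The upper bound $v_+\leq u_-$ prevents its $u$-coordinate from escaping upward, while the identity $\dot u=L$ together with (L3) and the dissipative structure coming from (H3') confines the orbit to a compact subset of $T^*M\times\R$ that is independent of $v_+$. On this universal compact region, the fiber intersection $\{H(x,\cdot,u)=0\}\cap T^*_xM$ is compact by (H1)--(H2), so $p(0)=Dv_+(x)$ is uniformly bounded at every differentiability point, furnishing a common Lipschitz constant $K$ for every $v_+\in\cS_+$.

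For the uniform lower bound, Proposition \ref{88996} combined with the uniqueness of $u_-$ identifies $\mathcal{I}_{v_+}$ with $\{x\in M:v_+(x)=u_-(x)\}$ and shows it is non-empty. Picking $x_{v_+}\in\mathcal{I}_{v_+}$ gives $v_+(x_{v_+})=u_-(x_{v_+})\geq \min_M u_-$, and the equi-Lipschitz bound then yields $v_+(x)\geq \min_M u_-\,-\,K\cdot\mathrm{diam}(M)$ for all $x\in M$. The main obstacle is the confinement argument in the middle step: extracting a universal a priori bound on the calibrated orbits without a circular dependence on the sup-norm of $v_+$. The cleanest workaround I see is to first derive equi-semiconvexity of $\cS_+$ from the $C^2$-regularity of $L$ on bounded $u$-intervals---a property that uses only the upper bound already obtained---since semiconvex functions uniformly bounded above on a compact manifold are automatically equi-Lipschitz; the lower bound then closes the argument as above.
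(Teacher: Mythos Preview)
The paper does not prove this proposition; it simply invokes \cite[Proposition 12]{WWY3}. So there is no in-paper argument to compare your outline against, and the question is whether your sketch stands on its own.

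Your three-step plan (upper bound via comparison with $u_-$; equi-Lipschitz; lower bound via $\mathcal I_{v_+}\neq\emptyset$) is sound in outline, and Steps~1 and~3 are fine as written. The gap is in Step~2, and your proposed workaround does not close it. The semiconvexity constant of $v_+=T^+_1v_+$ is controlled by $C^2$ bounds on $L(x,\dot x,u)$ along the optimal curves, and the $u$-values along those curves are exactly $v_+(\gamma(s))$. Without a lower bound on $v_+$ these values are not confined to any fixed compact interval, so the assertion ``equi-semiconvexity from the upper bound alone'' is unjustified---the circularity has merely been relocated, not removed. (Your second claim, that equi-semiconvex functions on a compact manifold are automatically equi-Lipschitz, is correct, but idle here since its hypothesis is not secured.)

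One way to break the loop is to swap Steps~2 and~3: get the uniform lower bound \emph{before} the Lipschitz estimate. By Proposition~\ref{88996} there exists $x_0\in\mathcal I_{v_+}\subseteq\mathcal N$ with $v_+(x_0)=u_-(x_0)$, and under (H3') the point $(x_0,u_-(x_0))$ lies on a semi-static curve in the compact set $\tilde{\mathcal A}=\tilde{\mathcal N}_{u_-}$. From $v_+=T^+_tv_+$ and Proposition~\ref{pr4.2} one has $v_+(x)\geq h^{x_0,u_-(x_0)}(x,t)$ for all $t>0$; the right-hand side admits a lower bound that is uniform in $x\in M$, $t\geq 1$, and $x_0\in\mathcal A$ by the a~priori estimates of Lemma~\ref{uueequi} (the constants there can be taken uniform because all semi-static curves sit on the graph of the single $u_-$ over the compact Aubry set). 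With $v_+$ now trapped in a fixed interval, the subsolution inequality $H(x,Dv_+(x),v_+(x))\leq 0$ together with (H1)--(H2) bounds $\|Dv_+(x)\|$ at every differentiability point, yielding the equi-Lipschitz constant.
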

 Note that $(\mathcal{S}_+,\preceq)$ is a partially ordered set. In view of  Zorn's lemma, if every chain in $\mathcal{S}_+$ has a lower bound in $\mathcal{S}_+$, then $\mathcal{S}_+$ contains a minimal element. To prove Item (1) of Theorem \ref{pooo}, it is suffices to show

\begin{proposition}\label{minS+}
Let $\mathcal{Z}$ be a totally ordered subset of $\mathcal{S}_+$. Let $\check{u}(x):=\inf_{u\in \mathcal{Z}}u(x)$ for each $x\in M$. Then $\check{u}\in \mathcal{S}_+$.
\end{proposition}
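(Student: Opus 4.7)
The plan is to verify that $\check{u}$ is a continuous function on $M$ that is fixed by the forward Lax-Oleinik semigroup, so that the characterization in Proposition \ref{pr4.5}(ii) places it in $\mathcal{S}_+$.

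First, I would establish that $\check{u}$ is Lipschitz continuous on $M$. By Proposition \ref{W1}, the family $\mathcal{Z}\subseteq\mathcal{S}_+$ is uniformly bounded and equi-Lipschitz with some common constant $K$; the pointwise infimum of such a family inherits both the bound and the Lipschitz constant $K$, hence $\check{u}\in C(M,\mathbb{R})$.

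Second, I would construct a monotone decreasing sequence in $\mathcal{Z}$ converging uniformly to $\check{u}$. Fix a countable dense set $D=\{x_k\}_{k\geq 1}\subset M$. For each $n$ and each $k\leq n$, choose $v^n_k\in\mathcal{Z}$ with $v^n_k(x_k)<\check{u}(x_k)+1/n$. Since $\mathcal{Z}$ is totally ordered, the pointwise minimum $w_n:=\min(v^n_1,\dots,v^n_n)$ coincides with one of the $v^n_k$ and therefore still lies in $\mathcal{Z}$, while satisfying $w_n(x_k)<\check{u}(x_k)+1/n$ for all $k\leq n$. Replacing $w_n$ by $\min(w_1,\dots,w_n)\in\mathcal{Z}$ (again using total ordering), I may assume $(w_n)$ is monotone decreasing. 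The equi-Lipschitz constraint together with pointwise convergence on the dense set $D$ then upgrades to uniform convergence $w_n\to\check{u}$ on the compact manifold $M$ (e.g.\ via equicontinuity and Ascoli--Arzel\`a, or via Dini's theorem applied to the monotone sequence against the continuous limit $\check{u}$).

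Third, I would pass to the limit in the fixed-point identity $T^+_t w_n=w_n$, which holds because each $w_n\in\mathcal{S}_+$ (Proposition \ref{pr4.5}(ii)). The representation $T^+_t\varphi(x)=\sup_{y\in M}h^{y,\varphi(y)}(x,t)$ from Proposition \ref{pr4.2}, combined with the local Lipschitz continuity of $h^{x_0,u_0}(x,t)$ in the argument $u_0$ recorded in Proposition \ref{pri1}(3), shows that $T^+_t$ is Lipschitz on $(C(M,\mathbb{R}),\|\cdot\|_\infty)$ for each fixed $t\geq 0$. Passing to the uniform limit in $n$ yields $T^+_t\check{u}=\check{u}$ for every $t\geq 0$, so Proposition \ref{pr4.5}(ii) concludes $\check{u}\in\mathcal{S}_+$. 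The main obstacle is the approximation step: without total ordering, the family-valued infimum $\check{u}$ need not be realized by a single sequence inside the controlled class $\mathcal{Z}$. The totally ordered hypothesis is precisely what converts a pointwise-defined infimum into a sequence-valued one inside $\mathcal{Z}$, and equi-Lipschitzness then upgrades pointwise to uniform convergence, which is what lets the continuous operator $T^+_t$ commute with the limit.
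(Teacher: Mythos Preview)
Your proposal is correct and follows essentially the same route as the paper: both construct a sequence in $\mathcal{Z}$ converging uniformly to $\check{u}$ via a countable dense subset of $M$ together with the total ordering (this is the paper's Lemma \ref{micons}), and then pass to the limit in the fixed-point identity $T^+_t w_n=w_n$ using continuity of $T^+_t$ in the sup norm. The only cosmetic difference is that the paper invokes the ready-made estimate $\|T^+_t\varphi-T^+_t\psi\|_\infty\leq e^{\lambda t}\|\varphi-\psi\|_\infty$ from Proposition \ref{psg}(2), whereas you derive the continuity of $T^+_t$ from Propositions \ref{pr4.2} and \ref{pri1}(3); both work.
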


\begin{lemma}\label{micons}
There exists a sequence $\{u_n\}_{n\in\mathbb N}\subset \mathcal{Z}$ such that $u_n$ converges to $\check{u}$ uniformly.
\end{lemma}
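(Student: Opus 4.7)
The plan is to leverage the equi-Lipschitz and uniform boundedness of $\cS_+$ granted by Proposition \ref{W1}, together with the total ordering of $\mathcal{Z}$ under $\preceq$. Since every $u\in\mathcal{Z}$ is $K$-Lipschitz for a common constant $K$, the pointwise infimum $\check u$ is itself $K$-Lipschitz, hence continuous on $M$.

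Next I would fix a countable dense subset $\{x_k\}_{k\in\mathbb N}$ of $M$ (which exists since $M$ is a compact metric space). For each pair $(n,k)$ with $k\leq n$, use the definition of $\check u(x_k)$ to pick $v_{n,k}\in\mathcal{Z}$ with
\[
v_{n,k}(x_k)<\check u(x_k)+\frac{1}{n}.
\]
Because $\mathcal{Z}$ is totally ordered by $\preceq$, the pointwise minimum of the finite family $v_{n,1},\dots,v_{n,n}$ equals one of its members and therefore still lies in $\mathcal{Z}$; call it $w_n$. Then $w_n(x_k)<\check u(x_k)+1/n$ for every $k\leq n$. To produce a decreasing sequence, set $u_n:=\min(w_1,\dots,w_n)$, which again belongs to $\mathcal{Z}$ by total ordering. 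By construction $\{u_n\}$ is pointwise non-increasing, $u_n\in\mathcal{Z}$, and for each fixed $k$ we have $u_n(x_k)\to\check u(x_k)$ as $n\to\infty$.

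Finally I would upgrade pointwise convergence to uniform convergence. The monotone sequence $u_n$ has a pointwise limit $u_\infty$ that is also $K$-Lipschitz (as a decreasing limit of $K$-Lipschitz functions), and $u_\infty(x_k)=\check u(x_k)$ on the dense set; continuity of both sides forces $u_\infty\equiv\check u$ on all of $M$. Since $\{u_n\}$ is a decreasing sequence of continuous functions on the compact space $M$ converging pointwise to the continuous function $\check u$, Dini's theorem yields $u_n\rightrightarrows\check u$. Equivalently, one can invoke the equi-Lipschitz property directly: pointwise convergence on a dense subset of a compact metric space together with a uniform modulus of continuity gives uniform convergence.

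No serious obstacle arises here: the only step that uses a structural hypothesis specific to this setting is the closure of $\mathcal{Z}$ under finite minima, which is immediate from the total ordering. All other steps rely only on the equi-Lipschitz bound from Proposition \ref{W1} and compactness of $M$.
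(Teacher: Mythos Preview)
Your proposal is correct and follows essentially the same approach as the paper: both use the equi-Lipschitz bound from Proposition \ref{W1}, a countable dense subset of $M$, and the total ordering of $\mathcal{Z}$ to take finite minima, then upgrade pointwise convergence on the dense set to uniform convergence. The only cosmetic difference is that you arrange the sequence to be monotone so as to invoke Dini's theorem, whereas the paper argues directly from equi-Lipschitz continuity; you mention this alternative as well.
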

\begin{proof}
Note that all of elements in $\mathcal S_+$ are uniformly bounded and $\kappa$-equi-Lipschitz continuous. We only need to construct a sequence $\{u_n\}_{n\in\mathbb N}\subset \mathcal{Z}$ such that $u_n$ converges to $\check{u}$ pointwisely.

Since the Riemannian manifold $M$ is compact, it is  separable. Namely on can find a countable dense subset denoted by $U:=\{x_1,x_2,\dots,x_n,\dots\}$.

\bigskip
\noindent \textbf{Claim.} There exists a sequence $\{u_n\}_{n\in\mathbb N}\subset \mathcal{Z}$ such that for a given $n\in\mathbb N$ and each $i\in\{1,2,\dots,n\}$,
\begin{equation}\label{un}
  0\leq u_n(x_i)-\check u(x_i)<\frac{1}{n}.
\end{equation}

\noindent If  the claim is true, then  $u_n$ converges to $\check{u}$ pointwisely. In fact, according to Proposition \ref{W1}, every $u_+\in\mathcal S_+$ is $\kappa$-Lipschitz, we have
\begin{equation}\label{baru}
  \check u(x)-\check u(y)\leq \sup_{u_+\in\mathcal S_+}|u_+(x)-u_+(y)|\leq \kappa \text{dist}(x,y).
\end{equation}
Fix $x\in M$. There exists a subsequence $V:=\{x_{k_m}\}_{m\in\mathbb N}\subseteq U$ such that $|x_{k_m}-x|<1/{k_m}$. Given ${k}_m\in \mathbb{N}$, we take $n\geq k_m$. Then $\{x_1,x_2,\dots,x_n\}\cap V\neq \emptyset$. Let ${x}_{i_0}\in \{x_1,x_2,\dots,x_n\}\cap V$. It follows from (\ref{baru}) that
\begin{equation*}
\begin{aligned}
  |u_n(x)-\check u(x)|&\leq |u_n(x)-u_n(x_{i_0})|+|u_n(x_{i_0})-\check u(x_{i_0})|+|\check u(x)-\check u(x_{i_0})|
  \\ &\leq 2\kappa\ \text{dist}(x_{i_0},x)+\frac{1}{n}\leq \frac{2\kappa}{{i_0}}+\frac{1}{n}.
\end{aligned}
\end{equation*}
Let $n$ and ${i_0}$ tend to $+\infty$ successively. We get the pointwise convergence of $u_n$ to $\check u$.

We prove the claim by induction in the following. For $x_1\in U$, by the definition of the infimum, there exists $u_1\in \mathcal{Z}$ such that $u_1(x_1)-\check u(x_1)<1/n$ for a given $n\in \mathbb{N}$. We assume there exists $u_k\in \mathcal{Z}$ such that for $i\in \{1,2,\ldots,k\}$, $u_k(x_i)-\check u(x_i)<1/n$. One needs to construct $u_{k+1}$. For $x_{k+1}\in U$, if $u_k(x_{k+1})-\check u(x_{k+1})<1/n$, then we take $u_{k+1}\equiv u_k$. Otherwise, we have $u_k(x_{k+1})-\check u(x_{k+1})\geq 1/n$. In this case, one can find $u_{k+1}\in \mathcal{Z}$ such that $u_{k+1}(x_{k+1})-\check u(x_{k+1})<1/n$.

It remains to show $u_{k+1}(x_{i})-\check u(x_{i})<1/n$ for  $i\in \{1,2,\ldots,k\}$. We know that
\[u_{k+1}(x_{k+1})<\check u(x_{k+1})+\frac{1}{n}\leq u_k(x_{k+1}).\]
Note that $\mathcal{Z}$ is totally ordered. It yields $u_{k+1}\leq u_k$ on $M$. Thus, for  $i\in \{1,2,\ldots,k\}$,
\[u_{k+1}(x_i)-\check u(x_{i})\leq u_{k}(x_i)-\check u(x_{i})<\frac{1}{n}.\]

This completes the proof of Lemma \ref{micons}.
\end{proof}

Under (H1)-(H3), by the definitions of $T_t^{\pm}$, we have the following properties (see \cite[Proposition 2.4]{WWY2} for details).
\begin{proposition}\label{psg}
\
\begin{itemize}
\item [(1)] For $\varphi_1$ and $\varphi_2 \in C(M)$, if $\varphi_1(x)<\varphi_2(x)$ for all $x\in M$, we have $T^-_t\varphi_1(x)< T^-_t\varphi_2(x)$ and $T^+_t\varphi_1(x)< T^+_t\varphi_2(x)$ for all $(x,t)\in M\times(0,+\infty)$.

\item [(2)] Given any $\varphi$ and $\psi\in C(M)$, we have $\|T^-_t\varphi-T^-_t\psi\|_\infty\leq\|\varphi-\psi\|_\infty$ and $\|T^+_t\varphi-T^+_t\psi\|_\infty\leq e^{\lambda t}\|\varphi-\psi\|_\infty$ for all $t>0$.
\end{itemize}
\end{proposition}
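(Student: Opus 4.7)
The plan is to prove Proposition \ref{psg} by reducing both parts to properties of the action functions $h_{x_0,u_0}$ and $h^{x_0,u_0}$ via the representation formulas in Proposition \ref{pr4.2}, and then invoking the monotonicity results of Propositions \ref{pri}(1), \ref{pri1}(1) together with the quantitative Lipschitz-in-$u_0$ estimates collected in Proposition \ref{relation}.

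For part (1), I will use $T^-_t\varphi(x)=\inf_{y\in M}h_{y,\varphi(y)}(x,t)$. Since $M$ is compact and $(y,u)\mapsto h_{y,u}(x,t)$ is continuous (Proposition \ref{pri}(3)), the infimum is attained at some $y_2\in M$ for $\varphi_2$. Then
\[
T^-_t\varphi_2(x)=h_{y_2,\varphi_2(y_2)}(x,t)>h_{y_2,\varphi_1(y_2)}(x,t)\geq T^-_t\varphi_1(x),
\]
where the strict inequality is the strict monotonicity in the $u$-slot from Proposition \ref{pri}(1). Symmetrically, for $T^+_t$ the supremum $T^+_t\varphi_1(x)=\sup_{y}h^{y,\varphi_1(y)}(x,t)$ is attained at some $y_1$, and Proposition \ref{pri1}(1) yields
\[
T^+_t\varphi_1(x)=h^{y_1,\varphi_1(y_1)}(x,t)<h^{y_1,\varphi_2(y_1)}(x,t)\leq T^+_t\varphi_2(x).
\]

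For part (2), set $c:=\|\varphi-\psi\|_\infty$, so $\psi-c\leq\varphi\leq\psi+c$. For the $T^-$ bound, the monotonicity of $h_{y,\cdot}$ together with Proposition \ref{relation}(1) (which gives $|h_{y,u}(x,t)-h_{y,v}(x,t)|\leq|u-v|$) yields, for every $y\in M$,
\[
h_{y,\psi(y)}(x,t)-c\leq h_{y,\varphi(y)}(x,t)\leq h_{y,\psi(y)}(x,t)+c.
\]
Taking infimum over $y$ on each side and applying Proposition \ref{pr4.2} gives $|T^-_t\varphi(x)-T^-_t\psi(x)|\leq c$, which is the desired non-expansive estimate.

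The main obstacle is the $e^{\lambda t}$ factor in the bound for $T^+_t$, since Proposition \ref{relation} supplies only the one-sided estimate $h^{x_0,u}(x,t)-h^{x_0,v}(x,t)\geq u-v$ for $u\geq v$. To close this gap, I will establish the comparison $|h^{x_0,u_0}(x,t)-h^{x_0,v_0}(x,t)|\leq e^{\lambda t}|u_0-v_0|$ via a Gronwall argument applied to the implicit defining equation (\ref{2-3}): fixing a common minimizing curve $\gamma$ for one of the two endpoints, the functions $\tau\mapsto h^{x_0,u_0}(\gamma(\tau),t-\tau)$ and $\tau\mapsto h^{x_0,v_0}(\gamma(\tau),t-\tau)$ satisfy ODEs of the form $\dot w=-L(\gamma,\dot\gamma,w)$ in the reversed time variable, whose difference $\Delta(\tau)$ has growth rate controlled by $|\partial_u L|\leq\lambda$. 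Integrating yields $|\Delta(0)|\leq e^{\lambda t}|\Delta(t)|$, i.e., the required Lipschitz estimate. Once this pointwise bound is in hand, the same sandwich argument as in the $T^-$ case, now applied to $T^+_t\varphi(x)=\sup_y h^{y,\varphi(y)}(x,t)$ via Proposition \ref{pr4.2} and the monotonicity of $h^{y,\cdot}$ from Proposition \ref{pri1}(1), delivers $\|T^+_t\varphi-T^+_t\psi\|_\infty\leq e^{\lambda t}\|\varphi-\psi\|_\infty$, completing the proof.
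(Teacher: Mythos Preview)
The paper does not actually prove this proposition; it simply records the statement and cites \cite[Proposition 2.4]{WWY2} for details. So there is no in-text argument to compare against, and your reduction to the action functions via Proposition~\ref{pr4.2} together with Propositions~\ref{pri}(1), \ref{pri1}(1) and \ref{relation} is a perfectly natural route, and is in the spirit of the proofs in the cited references. Your arguments for part~(1) and for the $T^-_t$ half of part~(2) are correct as written.

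There is, however, a small but genuine inaccuracy in your Gronwall step for the $T^+_t$ bound. You take $\gamma$ to be a minimizer for \emph{one} of $h^{x_0,u_0}(x,t)$, $h^{x_0,v_0}(x,t)$ and then assert that \emph{both} value functions $\tau\mapsto h^{x_0,u_0}(\gamma(\tau),t-\tau)$ and $\tau\mapsto h^{x_0,v_0}(\gamma(\tau),t-\tau)$ satisfy the ODE $\dot w=L(\gamma,\dot\gamma,w)$. Only the one for which $\gamma$ is an actual minimizer obeys this as an equality; along a non-minimizing curve the other value function satisfies merely an inequality coming from the Markov property (Proposition~\ref{pri1}(4)). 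Two clean ways to repair this: (i) replace the second value function by the \emph{Herglotz action} along $\gamma$, i.e., the solution $W$ of $\dot W=L(\gamma,\dot\gamma,W)$ with terminal data $W(t)=v_0$, and use the variational characterization to get $h^{x_0,v_0}(x,t)\geq W(0)$; then the honest ODE comparison between $w_1$ and $W$ gives $|w_1(0)-W(0)|\leq e^{\lambda t}|u_0-v_0|$. (ii) Bypass the action functions and run Gronwall directly on $m(s):=\|T^+_s\varphi-T^+_s\psi\|_\infty$: picking a maximizing curve $\gamma$ for the larger of $T^+_t\varphi(x)$, $T^+_t\psi(x)$ and using $\gamma$ as a competitor for the other yields $m(t)\leq \|\varphi-\psi\|_\infty+\lambda\int_0^t m(\sigma)\,d\sigma$, hence $m(t)\leq e^{\lambda t}\|\varphi-\psi\|_\infty$. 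Either fix preserves the structure of your argument.
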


\noindent{\it Proof of Proposition \ref{minS+}.}
If $\mathcal{Z}$ is a finite set, the proof is finished. We then consider $\mathcal{Z}$ being a infinite set. By Proposition \ref{pr4.5}, we need to show $\check u$ is a fixed point of $T_t^+$. By Proposition \ref{psg} (2), we have
\begin{equation*}
  \|T^+_tu_n-T^+_t\check u\|_\infty\leq e^{\lambda t}\|u_n-\check u\|_\infty,
\end{equation*}
By Lemma \ref{micons}, the right hand side tends to zero. Then for a given $t>0$,
\begin{equation*}
  T^+_t\check u=\lim_{n\rightarrow+\infty}T^+_tu_n=\lim_{n\rightarrow+\infty}u_n=\check u,
\end{equation*}
which implies $\check u\in \cS_+$ by Proposition \ref{pr4.5}.
\EEnd

Next, we prove Item (2) of Theorem \ref{pooo}. Let us recall $\mathcal{Z}_{\max}$ denotes a maximal totally ordered subset  of $\cS_+$,  $u_-$ denotes the unique backward weak KAM solution, and $u^*$ denotes the minimal element in $\mathcal{Z}_{\max}$, i.e.
\[u^*(x):=\inf_{v_+\in \mathcal{Z}_{\max}}v_+(x).\]
\begin{lemma}\label{l77}
Denote
\[ {\mathcal{I}}_{u^*}:=\{x\in M\ |\ u^*(x)=u_-(x)\}.\]
Then  ${\mathcal{I}}_{u^*}\neq \emptyset$ and for all $v_+\in \mathcal{Z}_{\max}$, $v_+=u_-$ on ${\mathcal{I}}_{u^*}$.
\end{lemma}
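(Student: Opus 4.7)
The plan is to combine three ingredients: that $u^*$ itself belongs to $\cS_+$, Proposition \ref{88996} applied to $u^*$ under (H3'), and the pointwise bound $v_+\leq u_-$ valid for every $v_+\in\cS_+$.

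First I would observe that Proposition \ref{minS+} applied to the chain $\mathcal{Z}_{\max}$ yields $u^*\in\cS_+$; the maximality of $\mathcal{Z}_{\max}$ as a chain then forces $u^*\in\mathcal{Z}_{\max}$, since otherwise $\mathcal{Z}_{\max}\cup\{u^*\}$ would be a strictly larger chain in $(\cS_+,\preceq)$. Under (H3') one has $\cS_-=\{u_-\}$, so Proposition \ref{nonemp1}(2) applied to $u^*$ gives $\lim_{t\to+\infty}T_t^-u^*=u_-$ uniformly on $M$. Hence the set $\I_{u^*}$ defined in the lemma coincides with the set $\{x:u^*(x)=\lim_{t\to+\infty}T_t^-u^*(x)\}$ of Proposition \ref{88996}, which is non-empty, yielding the first assertion.

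The central step is the pointwise bound $v_+\leq u_-$ on $M$ for every $v_+\in\cS_+$. Fix $v_+\in\cS_+$, $y\in M$, $t>0$ and a solution $(\sigma(s),p(s),u(s))$ of (CH) on $[0,t]$ with $\sigma(0)=y$, $u(0)=v_+(y)$, $\sigma(t)=x$. Legendre duality gives $\dot u=L(\sigma,\dot\sigma,u)$ along this orbit, while Definition \ref{bwkam}(i) applied to $\sigma$ yields $\dot\psi\leq L(\sigma,\dot\sigma,\psi)$ a.e., where $\psi(s):=v_+(\sigma(s))$. Setting $\phi:=u-\psi$ one has $\phi(0)=0$ and
\[\dot\phi\ \geq\ L(\sigma,\dot\sigma,u)-L(\sigma,\dot\sigma,\psi)\ \geq\ -\lambda|\phi|\]
by (L3). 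A Gr\"onwall argument rules out $\phi(\tau_0)<0$ for any $\tau_0\in[0,t]$: at the last zero $s_0\leq\tau_0$ of $\phi$ the estimate reads $\dot\phi\geq\lambda\phi$ on $(s_0,\tau_0]$, which integrates to $\phi(\tau_0)\geq 0$, a contradiction. Hence $u(t)\geq v_+(x)$. Taking the infimum over (CH)-orbits via Proposition \ref{pri}(2) gives $h_{y,v_+(y)}(x,t)\geq v_+(x)$, and then the infimum over $y\in M$ combined with Proposition \ref{pr4.2} produces $T_t^-v_+(x)\geq v_+(x)$. Letting $t\to+\infty$ and invoking the uniqueness of $u_-$ under (H3') yields $v_+\leq u_-$.

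With the bound in hand, the second assertion is immediate: for any $v_+\in\mathcal{Z}_{\max}$ and any $x\in\I_{u^*}$,
\[u_-(x)\ =\ u^*(x)\ \leq\ v_+(x)\ \leq\ u_-(x),\]
where the first inequality uses the minimality of $u^*$ in $\mathcal{Z}_{\max}$ and the second is the bound just proved, forcing equality throughout. The main obstacle will be the Gr\"onwall step: because $L$ is non-increasing in $u$ in the contact setting, the ODE comparison is not monotone in the classical direction, and one must instead control $|\dot\phi|$ by $\lambda|\phi|$ via (L3) to force $\phi\geq 0$ through zero-crossings. Everything else is routine bookkeeping with the propositions recalled in Section \ref{apA}.
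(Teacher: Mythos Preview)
Your proof is correct and follows the same skeleton as the paper's: $u^*\in\cS_+$ via Proposition~\ref{minS+}, $\I_{u^*}\neq\emptyset$ via Proposition~\ref{88996} combined with the uniqueness of $u_-$ under (H3'), and then the squeeze $u^*\leq v_+\leq u_-$ on $\I_{u^*}$. The only substantive difference is that the paper simply asserts $v_+\leq u_-$ as known, whereas you supply a direct Gr\"onwall-type argument for it; your argument is correct (the key point being that along any (CH) orbit $\dot u=L(\sigma,\dot\sigma,u)$, while domination of $v_+$ gives $\dot\psi\leq L(\sigma,\dot\sigma,\psi)$ a.e., and (L3) bounds the difference), and it is a reasonable self-contained substitute for invoking the standard fact that domination implies $T_t^-v_+\geq v_+$.
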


\begin{proof}
By Proposition \ref{minS+}, $u^*\in \cS_+$. By Proposition \ref{nonemp1} and the uniqueness of $u_-$,
\[u_-(x)=\lim_{t\to+\infty}T_t^-u^*(x).\]
According to Proposition \ref{88996},
 ${\mathcal{I}}_{u^*}\neq \emptyset$.
Note that for all $v_+\in \mathcal{Z}_{\max}$,
\[u^*\leq v_+\leq u_-,\quad \text{on}\ M.\]
Then $v_+(x)=u_-(x)$ for all $x\in {\mathcal{I}}_{u^*}$.
\end{proof}

\begin{lemma}\label{hyxuxttttt}
Define $m(\cdot,\cdot):M\times M\to \R$ by
\[m(x,y):=\limsup_{t\ri +\infty}h^{y,u_-(y)}(x,t),\quad \forall x,y\in M.\]
Then for each $y\in M$, $m(\cdot,y)\in \cS_+$.
\end{lemma}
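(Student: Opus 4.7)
The plan is to show that $m(\cdot, y)$ is a bounded Lipschitz function and is fixed by every forward Lax-Oleinik operator $T^+_s$; the conclusion then follows from Proposition \ref{pr4.5}(ii). Throughout, let $v(\cdot) := m(\cdot, y)$.

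First I would establish that $v$ takes values in $\mathbb{R}$ and is Lipschitz on $M$. Proposition \ref{pr4.2} yields
\[
h^{y, u_-(y)}(x, t) \leq T^+_t u_-(x),
\]
and Proposition \ref{nonemp1}(1) combined with Proposition \ref{W1} provides a uniform upper bound on the right-hand side as $t \to +\infty$. The equi-Lipschitz property in $x$ of the action functions $\{h^{y, u_-(y)}(\cdot, t)\}_{t \geq 1}$ under (H1)--(H3), together with the upper bound, yields uniform two-sided boundedness of the family and passes to the $\limsup$ to make $v$ Lipschitz on $M$ with the same constant.

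Next I would verify $T^+_s v \leq v$ pointwise. For any $x, z \in M$ and $s > 0$, pick a sequence $t_n \to +\infty$ realizing the $\limsup$ at $z$, namely $h^{y, u_-(y)}(z, t_n) \to v(z)$. The Markov property of Proposition \ref{pri1}(4) gives, for each $n$,
\[
h^{z, h^{y, u_-(y)}(z, t_n)}(x, s) \leq \sup_{z'} h^{z', h^{y, u_-(y)}(z', t_n)}(x, s) = h^{y, u_-(y)}(x, t_n + s).
\]
Passing to the limit and invoking the Lipschitz continuity in the $u_0$-variable from Proposition \ref{pri1}(3),
\[
h^{z, v(z)}(x, s) \leq \limsup_{n \to +\infty} h^{y, u_-(y)}(x, t_n + s) \leq v(x).
\]
Taking supremum over $z$ and applying Proposition \ref{pr4.2} gives $T^+_s v(x) \leq v(x)$.

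The main obstacle is the reverse inequality $T^+_s v \geq v$, which requires exchanging a $\limsup$ with a supremum over a non-compact time variable. Fix $x \in M$ and $\varepsilon > 0$, and pick $t_n \to +\infty$ with $h^{y, u_-(y)}(x, t_n) > v(x) - \varepsilon$ for $n$ large. The Markov property provides near-maximizers $z_n \in M$ satisfying
\[
h^{z_n, h^{y, u_-(y)}(z_n, t_n - s)}(x, s) \geq h^{y, u_-(y)}(x, t_n) - \tfrac{1}{n}.
\]
Compactness of $M$ lets me extract a subsequence $z_n \to z^* \in M$. Setting $a_n := h^{y, u_-(y)}(z_n, t_n - s)$ and using equi-Lipschitz continuity in $x$, $|a_n - h^{y, u_-(y)}(z^*, t_n - s)| \to 0$, so $\limsup_n a_n \leq v(z^*)$. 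The delicate step is then to pass to the limit inside $h^{z_n, a_n}(x, s)$: combining the joint Lipschitz continuity from Proposition \ref{pri1}(3) with monotonicity in the $u_0$-slot from Proposition \ref{pri1}(1) yields
\[
\limsup_{n \to +\infty} h^{z_n, a_n}(x, s) \leq h^{z^*, v(z^*)}(x, s) \leq T^+_s v(x).
\]
Hence $v(x) - \varepsilon \leq T^+_s v(x)$, and sending $\varepsilon \to 0$ gives the reverse inequality. Combining both directions, $T^+_s v = v$ for every $s \geq 0$, so $v \in \mathcal{S}_+$ by Proposition \ref{pr4.5}(ii).
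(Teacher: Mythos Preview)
Your argument follows the paper's strategy exactly: establish that $m(\cdot,y)$ is bounded and Lipschitz, then show it is fixed by every $T^+_s$ and invoke Proposition~\ref{pr4.5}(ii). The paper compresses your two inequalities into the bare assertion that ``$T^+_t$ commutes with $\limsup$'' (justified only by the Lipschitz estimate on $T^+_t$), whereas your separate treatment of $T^+_s v\le v$ and $T^+_s v\ge v$ via the Markov property, compactness of $M$, and monotonicity in $u_0$ actually supplies the missing content of that claim; in this respect your write-up is more complete than the paper's.

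There is, however, one circularity to fix. You claim that equi-Lipschitz continuity of $\{h^{y,u_-(y)}(\cdot,t)\}_{t\ge 1}$ together with the upper bound yields a uniform lower bound; this is false in general (nothing prevents an equi-Lipschitz family bounded above from drifting to $-\infty$), and moreover the usual proof of equi-Lipschitz continuity (Markov decomposition plus the local Lipschitz estimate for $h^{\cdot,\cdot}(\cdot,\delta)$ on $M\times[-K,K]\times M$) already presupposes a two-sided bound on the intermediate values $h^{y,u_-(y)}(z,t-\delta)$. Without the lower bound, the sequence $a_n$ in your $\ge$-step need not be bounded and the limit passage breaks. The paper avoids this by citing \cite[Lemma~1(i)]{WWY3} directly for the uniform bound on $M\times[\delta,+\infty)$; you should either do the same, or argue directly under (H3') via reversibility: setting $a=h^{y,u_-(y)}(x,t)$ one has $h_{x,a}(y,t)=u_-(y)$, and Proposition~\ref{relation}(1) together with $h_{x,u_-(x)}(y,t)\to u_-(y)$ gives $|a-u_-(x)|$ bounded. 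With boundedness secured, the remainder of your proof is correct.
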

\begin{proof}
Fix $y\in M$. By \cite[Lemma 1(i)]{WWY3}, $h^{y,u_-(y)}(\cdot,\cdot)$ is uniformly bounded on $M\times [\delta,+\infty)$ for any $\delta>0$. Thus, there exists a constant $K>0$ independent of $t$ such that for $t>\delta$ and each $x\in M$,
\[|h^{y,u_-(y)}(x,t)|\leq K.\]
Note that for any $t> 2\delta$, we have
\begin{align*}
&\left|h^{y,u_-(y)}(x,t)-h^{y,u_-(y)}(x',t)\right|\\
=&\left|\sup_{z\in M}h^{z,h^{y,u_-(y)}(z,t-\delta)}(x,\delta)-\sup_{z\in M}h^{z,h^{y,u_-(y)}(z,t-\delta)}(x',\delta)\right|\\
\leq &\sup_{z\in M}\left|h^{z,h^{y,u_-(y)}(z,t-\delta)}(x,\delta)-h^{z,h^{y,u_-(y)}(z,t-\delta)}(x',\delta)\right|.
\end{align*}
Since $h^{\cdot,\cdot}(\cdot,{\delta})$ is uniformly Lipschitz on $M\times [-K,K]\times M$ with some Lipschitz constant denoted by $\iota$, then
\[
\left|h^{y,u_-(y)}(x,t)-h^{y,u_-(y)}(x',t)\right|\leq \iota\ \text{dist}(x,x'), \quad \forall t> 2\delta.
\]
It follows that the family $\{h^{y,u_-(y)}(x,t)\}_{t>2\delta}$ is equi-Lipschitz continuous with respect to $x$. Thus, $m(x,y)$ is well defined.
 Note that for a given $t>0$, the  Lax-Oleinik semigroup $T_t^+$ satisfies
 \[\|T_t^+\varphi-T_t^+\psi\|_\infty\leq e^{\lambda t}\|\varphi-\psi\|_\infty,\]
 for any $\varphi,\psi\in C(M,\R)$. Note that $T_t^+$ commutes with $\limsup$. It follows that for a given $t\geq 0$,
 \[T_t^+m(x,y)=\limsup_{s\ri +\infty}T_t^+h^{y,u_-(y)}(x,s)=\limsup_{s\ri +\infty}h^{y,u_-(y)}(x,s+t)=m(x,y),\]
 which implies  $m(\cdot,y)\in \cS_+$.
\end{proof}
Let us recall ${\mathcal{V}}=\pi^* \tilde{\mathcal{V}}$, where  $\tilde{\mathcal{V}}$ denotes the set of $(x,p,u)\in T^*M\times\R$, for which there exists a strongly static orbit \[(x(\cdot),p(\cdot),u(\cdot)):\mathbb{R}\to T^*M\times\mathbb{R}\] passing through $(x,p,u)$.

\begin{lemma}\label{reminx}Given $y\in \mathcal{V}$, we have
\begin{equation}\label{wl5}
\inf_{\substack{v_+(y)=u_-(y)\\  v_+\in \cS_+ } }v_+(x)=m(x,y),\quad \forall x\in M.
\end{equation}
\end{lemma}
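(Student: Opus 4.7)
The plan is to establish the two inequalities between $m(x,y)$ and the infimum by exhibiting $m(\cdot, y)$ as an admissible competitor in the infimum and then using the defining property of forward weak KAM solutions to bound every competitor from below by $m(\cdot, y)$.

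First I would verify that $m(\cdot, y)$ itself belongs to the set $\{v_+ \in \mathcal{S}_+ : v_+(y) = u_-(y)\}$. Membership in $\mathcal{S}_+$ is exactly the content of Lemma \ref{hyxuxttttt}. To check the normalization $m(y,y) = u_-(y)$, I would use the hypothesis $y \in \mathcal{V}$: there exists a strongly static orbit $(x(\cdot), p(\cdot), u(\cdot)): \mathbb{R} \to T^*M \times \mathbb{R}$ passing through $y$ at some time $t_0$. Since $(x(\cdot), u(\cdot))$ is in particular semi-static, Proposition \ref{pr119955} together with the strictly increasing assumption (H3') (which forces $\mathcal{S}_-$ to be a singleton, consisting of $u_-$) implies $u(t) = u_-(x(t))$ for every $t \in \mathbb{R}$. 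In particular $u(t_0) = u_-(y)$. Applying Lemma \ref{lem3.1qq}(2) to this strongly static curve at $s = t = t_0$ yields
\[
u_-(y) = u(t_0) = \limsup_{\tau \to +\infty} h^{y, u_-(y)}(y, \tau) = m(y, y),
\]
as desired. This already gives the upper bound
\[
\inf_{\substack{v_+(y) = u_-(y) \\ v_+ \in \mathcal{S}_+}} v_+(x) \leq m(x, y), \quad \forall x \in M.
\]

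For the reverse inequality, let $v_+ \in \mathcal{S}_+$ satisfy $v_+(y) = u_-(y)$. By Proposition \ref{pr4.5}(ii), $T_t^+ v_+ = v_+$ for all $t \geq 0$, and by Proposition \ref{pr4.2},
\[
v_+(x) = T_t^+ v_+(x) = \sup_{z \in M} h^{z, v_+(z)}(x, t) \geq h^{y, v_+(y)}(x, t) = h^{y, u_-(y)}(x, t)
\]
for every $t > 0$ and every $x \in M$. Taking the $\limsup$ as $t \to +\infty$ on the right-hand side gives $v_+(x) \geq m(x, y)$. Since $v_+$ was an arbitrary competitor, taking the infimum over such $v_+$ produces the matching lower bound, which combined with the upper bound established above yields the claimed equality.

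The only genuinely delicate step is the identification $m(y,y) = u_-(y)$, which is where the hypothesis $y \in \mathcal{V}$ (as opposed to merely $y \in \mathcal{A}$) is used in an essential way: it is precisely the strong staticity of the orbit through $y$ that allows one to invoke the $\limsup$-characterization of Lemma \ref{lem3.1qq}(2), and it is the uniqueness of $u_-$ under (H3') that pins the $u$-coordinate of the orbit to $u_-(y)$. The rest of the argument is purely a semigroup manipulation and should cause no trouble.
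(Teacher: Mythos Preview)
Your proposal is correct and follows essentially the same approach as the paper's proof: show $m(\cdot,y)\in\cS_+$ with $m(y,y)=u_-(y)$ via the strongly static curve through $y$ and Lemma~\ref{lem3.1qq}(2), then bound every admissible $v_+$ below by $m(\cdot,y)$ using $v_+=T_t^+v_+\geq h^{y,u_-(y)}(\cdot,t)$. The only cosmetic difference is that the paper justifies $u(t)=u_-(x(t))$ by citing $\mathcal{V}\subseteq\mathcal{A}$, whereas you invoke Proposition~\ref{pr119955} together with the uniqueness of $u_-$ under (H3'); these are equivalent.
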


\begin{proof}
By the definition of $\mathcal{V}$, there exists a strongly static curve \[(x(\cdot),u(\cdot)):\mathbb{R}\to M\times\mathbb{R}\] such that $x(0)=y$. Since $\cV\subseteq \mathcal{A}$, then $u(t)=u_-(x(t))$ for all $t\in \R$. In particular, $u(0)=u_-(y)$. It follows from Proposition \ref{lem3.1qq} that
\begin{align*}
u_-(y)=u(0)=\limsup_{s\to +\infty}h^{x(0),u(0)}(x(0),s)=\limsup_{s\to +\infty}h^{y,u_-(y)}(y,s)=m(y,y),
\end{align*}
which together with $m(\cdot,y)\in \cS_+$ implies \[m(x,y)\geq \inf_{\substack{v_+(y)=u_-(y)\\  v_+\in \cS_+ } }v_+(x).\]

On the other hand, we have
\begin{align*}
v_+(x)&=T_t^+v_+(x)\geq \sup_{t>0}h^{y,v_+(y)}(x,t)\\
&=\sup_{t>0}h^{y,u_-(y)}(x,t)\geq \limsup_{t\to +\infty}h^{y,u_-(y)}(x,t)=m(x,y).
\end{align*}
It means \[\inf_{\substack{v_+(y)=u_-(y)\\  v_+\in \cS_+ } }v_+(x)\geq m(x,y).\]
This completes the proof of Lemma \ref{reminx}.
\end{proof}

\begin{remark}\label{anottr}
In general contact cases,
\begin{itemize}
\item [(1)] $m(x,y)$ is Lipschitz continuous in $x$, but it may not be continuous in $y$;
\item [(2)] the equality (\ref{wl5}) may not hold for $y\in M\backslash \mathcal{V}$.
\end{itemize}
We still consider the  Hamilton-Jacobi equation in Proposition \ref{exppp}:
\begin{equation}\label{chjjk}
\lambda u+\frac{1}{2}|Du|^2+Du\cdot V(x)=0,\quad\ x\in \mathbb{T},
\end{equation}
where $0<\lambda<|V'(x_2)|$.
By definition, for $y_0\in \mathbb{T}$,
\[m(y_0,y_0)=-\liminf_{\tau\to+\infty}\inf_{\gm(0)=\gm(\tau)=y_0}\int_0^\tau e^{\lambda s} \frac{1}{2}|\dot{\gm}(s)-V(\gm(s))|^2ds.\]
By Lemma \ref{hyxuxttttt}, $m(\cdot,y_0)\in \cS_+$. We choose a point $y_0\neq x_1,x_2$. It is not difficult to verify $m(y_0,y_0)<0$.
It follows that $m(x,y_0)=w_+(x)$ for all $x\in \mathbb{T}$. On the other hand, by Lemma \ref{lem3.1qq}(2), $m(x_2,x_2)=0$. Thus, $m(x,x_2)=u_+(x)\equiv 0$ for all $x\in \mathbb{T}$. Then
\[\lim_{\substack{y\to x_2\\  y\neq x_2 } }m(y_0,y)=w_+(y_0)<0=m(y_0,x_2),\]
which means $m(x,y)$ is not continuous at $y=x_2$. More precisely, for each $x\in \mathbb{T}$, $m(x,y)$ is continuous at $y\neq x_2$ and it is upper semicontinuous at $y=x_2$. This verifies Item (1).

For Item (2), we already know that if $y_0\neq x_1,x_2$, then $m(x,y_0)=w_+(x)$ for all $x\in \mathbb{T}$. Then
\[\inf_{\substack{v_+(y_0)=u_-(y_0)\\  v_+\in \cS_+ } }v_+(y_0)=u_+(y_0)= 0>w_+(y_0)=m(y_0,y_0).\]
Thus,  the equality (\ref{wl5}) does not hold.
\end{remark}


\noindent{\it Proof of Theorem \ref{pooo}(2).}
By the definition of the Mather set, we have
\[\emptyset\neq \tilde{\mathcal{M}}_{u^*}:=\tilde{\mathcal{M}}\cap G_{u^*}\subseteq \tilde{\mathcal{I}}_{u^*}.\]
Based on Section \ref{mathre}, the recurrent points are dense in $\tilde{\mathcal{M}}_{u^*}$. Let $(x_0,p_0,u_0)\in \tilde{\mathcal{M}}_{u^*}$ be a recurrent point. According to Section \ref{recstr}, $(x_0,p_0,u_0)\in \tilde{\mathcal{V}}$. Then one can  choose
\begin{equation}\label{mvi}
x_0\in \mathcal{M}\cap \mathcal{V}\cap {\mathcal{I}}_{u^*},
\end{equation}
such that
\[\inf_{\substack{v_+(x_0)=u_-(x_0)\\  v_+\in \cS_+ } }v_+(x)=m(x,x_0),\quad \forall x\in M.\]
It remains to prove
\[\inf_{\mathcal{Z}_{\max}}v_+(x)=\inf_{\substack{v_+(x_0)=u_-(x_0)\\  v_+\in \cS_+ } }v_+(x).\]
By (\ref{mvi}),  $u^*(x_0)=u_-(x_0)$. It follows that for all $v_+\in \mathcal{Z}_{\max}$, $v_+(x_0)=u_-(x_0)$. Then
\[\inf_{\mathcal{Z}_{\max}}v_+(x)\geq\inf_{\substack{v_+(x_0)=u_-(x_0)\\  v_+\in \cS_+ } }v_+(x).\]
On the other hand, by Lemma \ref{hyxuxttttt} and Lemma \ref{reminx},
\[u(x):=\inf_{\substack{v_+(x_0)=u_-(x_0)\\  v_+\in \cS_+ } }v_+(x)\in \cS_+.\]
Due to the maximality of the set $\mathcal{Z}_{\max}$, we have $u\in \mathcal{Z}_{\max}$. It implies
\[\inf_{\mathcal{Z}_{\max}}v_+(x)\leq \inf_{\substack{v_+(x_0)=u_-(x_0)\\  v_+\in \cS_+ } }v_+(x).\]
This completes the proof of Theorem \ref{pooo}(2).
\EEnd
\subsection{Existence of transitive orbits}
We prove Theorem \ref{CONC} in this part.
By Lemma \ref{CONCLL1}, we only need to consider the case with $x_2\in \mathcal{V}$.  By Proposition \ref{hyxuxttttt} and Proposition \ref{reminx}, the function
 \[m(\cdot,x_2)=\limsup_{t\to+\infty}h^{x_2,u_-(x_2)}(\cdot,t)\]
is the minimal forward weak KAM solution of (\ref{hj}) equaling to $u_-(x_2)$ at $x_2$. By assumption, for each $v_+\in \cS_+$, $v_+(x_2)=u_-(x_2)$ implies $v_+(x_1)=u_-(x_1)$. Then
\begin{equation}\label{himp}
\limsup_{t\to+\infty}h^{x_2,u_-(x_2)}(x_1,t)=u_-(x_1).
\end{equation}
Note that for each $(x_0,u_0)\in M\times\R$,
\[\lim_{t\to+\infty}h_{x_0,u_0}(x,t)=u_-(x).\]
It yields
\[\lim_{t\to+\infty}h_{x_1,u_1}(x_2,t)=u_-(x_2)\]
Note that $u_-(x_1)=u_1$, $u_-(x_2)=u_2$.
Theorem \ref{CONC} follows from Lemma \ref{CONC1}.
\EEnd


\noindent {\bf Acknowledgements:} The authors would like to thank Prof. J. Yan for many helpful and constructive discussions throughout this paper, especially on the structure of the set of forward weak KAM solutions in the strictly increasing cases. The authors also warmly appreciate Dr. K. Zhao for showing us the possibility of the case in Proposition \ref{exppp}(iv). We also would like to thank the referees for their careful reading of the paper and invaluable
comments which are very helpful in improving this paper.
Lin Wang is supported by NSFC Grant No.  12122109.

\begin{appendix}

\section{Auxiliary results}\label{apC}
For the sake of generality, we will prove all of the results in this appendix under  (H1), (H2) and   $|\frac{\partial H}{\partial u}|\leq \lambda$ instead of (H3).
\subsection{Strong staticity and one-sided semi-staticity}\label{pui}
In this part, we prove Proposition \ref{lem3.2x}. First of all, we provide a way to construct ``long" minimizers from ``short" ones, which is a direct consequence of the Markov and monotonicity properties of the action functions.
\begin{proposition}\label{Mar-new}
	{Given any $x$, $y$ and $z\in M$, $u_1$, $u_2$ and $u_3\in\mathbb{R}$, $t$, $s>0$, let
	\[
	h_{x,u_1}(y,t)=u_2,\quad h_{y,u_2}(z,s)=h_{x,u_1}(z,t+s)=u_3
	\]
    \[
	(resp.\quad h^{z,u_3}(y,s)=u_2,\quad h^{y,u_2}(x,t)=h^{z,u_3}(x,t+s)=u_1).
	\]
Let $\gamma_1:[0,t]\to M$ be a minimizer of $h_{x,u_1}(y,t)$ (resp. $h^{y,u_2}(x,t)$) and $\gamma_2:[0,s]\to M$ be a minimizer of $h_{y,u_2}(z,s)$ (resp. $h^{z,u_3}(y,s)$).
	Then
	\[
    \gamma(\sigma):=\left\{\begin{array}{ll}
    \gamma_1(\sigma),\ \ \qquad\sigma\in[0,t],\\
   \gamma_2(\sigma-t),\,\quad\sigma\in[t,t+s],
    \end{array}\right.
	\]
	is a minimizer of $h_{x,u_1}(z,t+s)$ (resp. $h^{z,u_3}(x,t+s)$).}
\end{proposition}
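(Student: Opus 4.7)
The plan is to show that the action of the concatenation $\gamma$, computed from the initial datum $(x, u_1)$, equals $u_3$; since $u_3 = h_{x, u_1}(z, t+s)$ by hypothesis and $h_{x, u_1}(z, t+s)$ is by definition the infimum of such actions, $\gamma$ then realizes the infimum and is therefore a minimizer of $h_{x, u_1}(z, t+s)$.

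First I would recall from Proposition \ref{IVP} that each minimizer of an implicit action lifts to an orbit of (\ref{c}) along which the $u$-coordinate evolves according to $\dot u = L(x, \dot x, u)$ from the prescribed initial value. Lifting $\gamma_1$ produces a $u$-component starting at $u_1$ and arriving at $u_2 = h_{x, u_1}(y, t)$ at time $t$; lifting $\gamma_2$ produces a $u$-component starting at $u_2$ and arriving at $u_3 = h_{y, u_2}(z, s)$ at time $s$. Gluing and appealing to uniqueness for $\dot u = L(\gamma, \dot\gamma, u)$ with $u(0) = u_1$, the evolution along $\gamma$ runs from $u_1$ through $u_2$ at time $t$ to $u_3$ at time $t+s$. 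Thus the action along $\gamma$ equals $u_3 = h_{x, u_1}(z, t+s)$, which gives minimality. The same conclusion can alternatively be extracted from the Markov property (Proposition \ref{pri}(4)): the chain
\[
h_{x, u_1}(z, t+s) \;\le\; h_{y,\, h_{x, u_1}(y, t)}(z, s) \;=\; h_{y, u_2}(z, s) \;=\; u_3 \;=\; h_{x, u_1}(z, t+s)
\]
forces equality, so the Markov infimum is attained at $y$, and the ``if and only if'' part of Proposition \ref{pri}(4) guarantees the existence of a minimizer through $y$ at time $t$; the ODE/uniqueness argument above then identifies the specific concatenation $\gamma$ of $\gamma_1$ and $\gamma_2$ as such a minimizer. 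The ``resp.'' forward case is symmetric, using the backward Markov identity of Proposition \ref{pri1}(4) and the analogous ODE description underlying $h^{\cdot,\cdot}$.

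The main obstacle is a bookkeeping one: the implicit nature of (\ref{baacf1}) forces one to distinguish carefully between the value of the action function $h_{x, u_1}(\cdot, \cdot)$ at an intermediate point on $\gamma$ and the value of $u$ along the lifted orbit of $\gamma$. These agree on minimizers by Proposition \ref{IVP}, and it is precisely this agreement at the join time $t$ (where both prescriptions give $u_2$) that makes the gluing step legitimate. Once this identification is in hand, the rest is a direct consequence of ODE uniqueness and the Markov identity.
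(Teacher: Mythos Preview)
Your ODE/gluing argument has a subtle but genuine gap. You correctly observe that the Carath\'eodory solution $v(\sigma)$ of $\dot v = L(\gamma,\dot\gamma,v)$, $v(0^+)=u_1$, reaches $u_3$ at time $t+s$. But in the implicit formulation (\ref{baacf1}), ``$\gamma$ is a minimizer of $h_{x,u_1}(z,t+s)$'' means
\[
u_1 + \int_0^{t+s} L\big(\gamma(\tau),\dot\gamma(\tau), h_{x,u_1}(\gamma(\tau),\tau)\big)\,d\tau \;=\; h_{x,u_1}(z,t+s),
\]
and the integrand involves $h_{x,u_1}(\gamma(\tau),\tau)$, not $v(\tau)$. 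These two functions agree on $[0,t]$ because $\gamma_1$ is a minimizer there, but on $[t,t+s]$ you only know that $h_{y,u_2}(\gamma_2(\sigma-t),\sigma-t)$ solves the ODE. To identify it with $h_{x,u_1}(\gamma(\sigma),\sigma)$ via ODE uniqueness you would need the latter to solve the same ODE on $[t,t+s]$---which is precisely the statement that $\gamma|_{[0,\sigma]}$ is a minimizer for every such $\sigma$, i.e.\ what you are trying to prove. Your final paragraph correctly spots the distinction, but ``agreement at the join time $t$'' does not propagate across the second interval without this circular step. (The argument \emph{would} go through cleanly under the Herglotz characterization $h_{x_0,u_0}(x,t)=\inf_\alpha u_\alpha(t)$ with $u_\alpha$ the Carath\'eodory solution along $\alpha$; this is equivalent to (\ref{baacf1}), but the equivalence is not part of the paper's stated toolkit.) Your Markov alternative has a related shortfall: the equality chain plus the ``if and only if'' in Proposition~\ref{pri}(4) yields only the existence of \emph{some} minimizer of $h_{x,u_1}(z,t+s)$ passing through $y$ at time $t$, not that the specific concatenation $\gamma$ is one; you then fall back on the same ODE step to pin down $\gamma$.

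The paper closes exactly this gap: it proves directly that
\[
h_{x,u_1}(\gamma(\sigma),\sigma) \;=\; h_{y,u_2}(\gamma(\sigma),\sigma-t), \qquad \sigma\in[t,t+s],
\]
by a short contradiction argument using the Markov property together with the \emph{strict} monotonicity of $u_0\mapsto h_{x_0,u_0}$ (Proposition~\ref{pri}(1)). Once this identity is in hand, the action integral along $\gamma$ in the sense of (\ref{baacf1}) splits as $(u_2-u_1)+(u_3-u_2)$ and minimality follows at once.
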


\begin{proof} {We only prove the property for $h_{\cdot,\cdot}(\cdot,t)$ here, the proof for $h^{\cdot,\cdot}(\cdot,t)$ is similar. By the Markov property, we have
	\[
	h_{x,u_1}(\gamma(\sigma),\sigma)\leq h_{y,h_{x,u_1}(y,t)}(\gamma(\sigma),\sigma-t)= h_{y,u_2}(\gamma(\sigma),\sigma-t),\quad \forall \sigma\in[t,t+s].
	\]
	We assert that the above inequality is in fact an equality, i.e.,
	 \begin{align}\label{2-1}
	 h_{x,u_1}(\gamma(\sigma),\sigma)= h_{y,u_2}(\gamma(\sigma),\sigma-t),\quad \forall \sigma\in[t,t+s].
	 \end{align}
	 If the assertion is true, a direct calculation shows
	 \begin{align*}
	 u_1+\int_0^{t+s}L(\gamma(\sigma),\dot{\gamma}(\sigma),h_{x,u_1}(\gamma(\sigma),\sigma))d\sigma=h_{x,u_1}(z,t+s),
	 \end{align*}
	 which implies that $\gamma$ is a minimizer of $h_{x,u_1}(z,t+s)$.
	
	 Therefore, we only need to show (\ref{2-1}) holds. By contradiction, we assume there exists $t_0\in[t,t+s)$ such that
	 \[
	  h_{x,u_1}(\gamma(t_0),t_0)< h_{y,u_2}(\gamma(t_0),t_0-t).
	  \]
	 From the Markov and the monotonicity properties, we get
	 \begin{align*}
	 u_3&=h_{x,u_1}(z,t+s)\leq h_{\gamma(t_0),h_{x,u_1}(\gamma(t_0),t_0)}(z,t+s-t_0)\\
&<h_{\gamma(t_0),h_{y,u_2}(\gamma(t_0),t_0-t)}(z,t+s-t_0)\\
&=h_{\gamma_2(t_0),h_{y,u_2}(\gamma_2(t_0),t_0-t)}(z,t+s-t_0)\\
&=h_{y,u_2}(z,s)=u_3,
	 \end{align*}
	 which is a contradiction.}
\end{proof}




\noindent{\it Proof of {Proposition} \ref{lem3.2x}.}
We only need to prove if $(x,p_0,u)\in {\tilde{\mathcal{V}}}$, $(x,p_+,u)\in \tilde{\N}^+$,   then $p_0=p_+$. The other case is similar. For each $t\in \R$, let \[(x_1(t),p_1(t),u_1(t))=\Phi_t(x,p_0,u).\]For each $t\geq 0$, let  \[(x_2(t),p_2(t),u_2(t))=\Phi_t(x,p_+,u).\]
We need to prove if a globally minimizing curve $(x_1(\cdot),u_1(\cdot)):\mathbb{R}\to M\times\mathbb{R}$ satisfies for each $t_1, t_2\in\mathbb{R}$,
	\begin{equation}\label{appxx}
	u_1(t_2)=\inf_{s>0}h_{x_1(t_1),u_1(t_1)}(x_1(t_2),s),
\end{equation}
then $p_0=p_+$.

Since  $(x,p_+,u)\in \tilde{\N}^+$, then $(x_2(\cdot),u_2(\cdot)):\R_+\to M\times\R$ is positively semi-static.
Fixing $\delta>0$, by the Markov property, we have
\[h_{x_1(-\delta),u_1(-\delta)}(x_2(\delta),2\delta)=\inf_{y\in M}h_{y,h_{x_1(-\delta),u_1(-\delta)}(y,\delta)}(x_2(\delta),\delta).\]
Note that
\[h_{x_1(-\delta),u_1(-\delta)}(x,\delta)=u,\quad  h_{x,u}(x_2(\delta),\delta)=u_2(\delta).\]
	It follows that
\[h_{x_1(-\delta),u_1(-\delta)}(x_2(\delta),2\delta)\leq h_{x,u}(x_2(\delta),\delta).\]
We assert that the inequality above is indeed an equality. If the assertion is true, then by Proposition \ref{Mar-new}, the curve defined by	
	\[
	\gamma(\sigma):=\left\{\begin{array}{ll}
	x_1(\sigma-\delta), \quad\sigma\in[0,\delta],\\
	x_2(\sigma-\delta),\,\quad\sigma\in[\delta,2\delta],
	\end{array}\right.
	\]
	is a minimizer of $h_{x_1(-\delta),u_1(-\delta)}(x_2(\delta),2\delta)$ and it is of class $C^1$. Thus,
\[p_0=\frac{\partial L}{\partial \dot{x}}(x,\dot{\gm}(0),0)=p_+.\]

It remains to verify the assertion. By contradiction, we assume that  there exists $\Delta>0$ such that	
	\[
	h_{x_1(-\delta),u_1(-\delta)}(x_2(\delta),2\delta)=h_{x,u}(x_2(\delta),\delta)-\Delta.
	\]

By (\ref{appxx}),  for each $\eps>0$, one can find $s_0>0$ such that
\[|h_{x,u}(x_1(-\delta),s_0)-u_1(-\delta)|\leq \eps.\]
From the Lipschitz continuity of $h_{x_0,u_0}(x,t)$ w.r.t. $u_0$,
\[|h_{x_1(-\delta),h_{x,u}(x_1(-\delta),s_0)}(x_2(\delta),2\delta)-h_{x_1(-\delta),u_1(-\delta)}(x_2(\delta),2\delta)|\leq k\eps,\]
where $k$ denotes the Lipschitz constant of $h_{x_0,u_0}(x,t)$ w.r.t. $u_0$.
It follows from the definition of $\tilde{\N}^+$ that
\begin{align*}
u_2(\delta)&=\inf_{\tau>0}h_{x,u}(x_2(\delta),\tau),\\
&\leq h_{x,u}(x_2(\delta),s_0+2\delta),\\
&\leq h_{x_1(-\delta),h_{x,u}(x_1(-\delta),s_0)}(x_2(\delta),2\delta),\\
&\leq h_{x_1(-\delta),u_1(-\delta)}(x_2(\delta),2\delta)+k\eps,\\
&=h_{x,u}(x_2(\delta),\delta)-\Delta+k\eps.
\end{align*}
Note that $\Delta$, $k$ are constants independent of $\eps$. Taking $\eps$ small enough, we have
	\[
	u_2(\delta)\leq h_{x,u}(x_2(\delta),\delta)-\frac{\Delta}{2}=u_2(\delta)-\frac{\Delta}{2},
	\]
	which is a contradiction. This completes the proof of {Proposition} \ref{lem3.2x}.

\vspace{1ex}

\subsection{Lipschitz continuity of Ma\~{n}\'{e} potentials}\label{apB}

Let $(x(\cdot),u(\cdot)):\R\ri M\times\R$ be a semi-static curve. Fixing $\tau\in\R$, we consider two kinds of the Ma\~{n}\'{e} potentials as follows:
\[\check{K}_{\tau}(x):=\inf_{s>0}h_{x(\tau),u(\tau)}(x,s),\quad \hat{K}_{\tau}(x):=\sup_{s>0}h^{x(\tau),u(\tau)}(x,s).\]

In this part, we will prove
\begin{proposition}\label{immm}
Given $\tau\in \R$, let $U$ be an open set containing $x(\tau)$. Then both $\check{K}_{\tau}(x)$ and $\hat{K}_{\tau}(x)$ are uniformly Lipschitz continuous with respect to $x\in M\backslash U$.
\end{proposition}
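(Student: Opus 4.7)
The strategy is a Markov--shortening argument. I will detail the case of $\check{K}_\tau$ in full; the case of $\hat{K}_\tau$ is the time-reversed counterpart, using the dual Markov property in Proposition~\ref{pri1}(4) together with the implicit formula (\ref{2-3}). Throughout, set $x_0 := x(\tau)$ and $u_0 := u(\tau)$.

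The first step is to establish a uniform bound $|\check{K}_\tau(x)| \leq M_0$ on $M$. The upper bound $\check{K}_\tau(x) \leq h_{x_0,u_0}(x,+\infty)$ is finite by (\ref{ba}), and yields a uniform upper estimate via the joint continuity in Proposition~\ref{pri}(3) together with compactness of $M$. For the lower bound, superlinearity (H2) implies that $h_{x_0,u_0}(x,s) \to +\infty$ as $s \to 0^+$ uniformly for $x$ on any set bounded away from $x_0$ (a curve realising the connection in short time $s$ must have average speed $\geq \mathrm{dist}(x,x_0)/s$, and superlinearity then forces the action to blow up). Combined with continuity of $h_{x_0,u_0}(\cdot,\cdot)$ on compacta of $M \times (0,+\infty)$, the infimum defining $\check{K}_\tau(x)$ is effectively taken over $s \geq s_0$ for some $s_0 = s_0(U) > 0$, and is therefore uniformly bounded below.

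Next, fix $x, y \in M \setminus U$ with $\rho := \mathrm{dist}(x,y) \leq 1$, and let $\eps > 0$. Choose $s^{*} > 0$ with $u^{*} := h_{x_0,u_0}(x,s^{*}) \leq \check{K}_\tau(x) + \eps$; then $|u^{*}| \leq M_0 + 1$. Let $\alpha:[0,\rho] \to M$ be a unit-speed minimising geodesic from $x$ to $y$. By the Markov property (Proposition~\ref{pri}(4)),
\[
\check{K}_\tau(y) \leq h_{x_0,u_0}(y, s^{*} + \rho) \leq h_{x,u^{*}}(y,\rho).
\]
Using $\alpha$ as a test curve in (\ref{baacf1}) and comparing with the induced ODE $U'(\sigma) = L(\alpha(\sigma), \dot{\alpha}(\sigma), U(\sigma))$ with $U(0) = u^{*}$, we obtain $h_{x,u^{*}}(y,\rho) \leq U(\rho) \leq u^{*} + C\rho$, where $C$ is any bound for $|L|$ on the compact set $\{(x,v,u) \in TM \times \R : x \in M,\ \|v\|_x \leq 1,\ |u| \leq M_0 + 2\}$. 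Combining, $\check{K}_\tau(y) - \check{K}_\tau(x) \leq \eps + C\rho$; letting $\eps \to 0$ and swapping the roles of $x$ and $y$ yields the Lipschitz bound with constant $C$ independent of $x, y \in M \setminus U$.

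The main obstacle is the boundedness step, and it is precisely here that the restriction $x \in M \setminus U$ becomes essential for $\hat{K}_\tau$. By superlinearity, $h^{x_0,u_0}(x,s) \to -\infty$ as $s \to 0^+$ for each $x \neq x_0$, so the supremum $\hat{K}_\tau(x)$ is, uniformly over $M \setminus U$, approached by $s$ bounded below by some $s_0(U) > 0$. The upper bound $\hat{K}_\tau(x) \leq \limsup_{t \to \infty} h^{x_0,u_0}(x,t)$ is finite because the semi-static curve through $(x_0,u_0)$ ensures, via the argument already used in the proof of Lemma~\ref{hyxuxttttt}, that the family $\{h^{x_0,u_0}(\cdot,t)\}_{t \geq s_0}$ is equi-bounded and equi-Lipschitz on $M$. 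With this uniform bound on $\hat{K}_\tau$ over $M \setminus U$ in hand, the same Markov--shortening argument applied to (\ref{2-3}) completes the proof.
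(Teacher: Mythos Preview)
Your approach is largely correct and genuinely differs from the paper's. The paper first shows (Lemma~\ref{fu1}) that $h_{x_0,u_0}(x,t)\to+\infty$ as $t\to 0^+$ uniformly on $M\setminus U$, so that $\check K_\tau(x)=\inf_{s>\delta}h_{x_0,u_0}(x,s)$ for some $\delta>0$, and then invokes directly the \emph{equi-Lipschitz} part of Lemma~\ref{uueequi} for the family $\{h_{x_0,u_0}(\cdot,s)\}_{s>\delta}$: the Lipschitz bound on $\check K_\tau$ is then immediate from $|\inf f-\inf g|\le\sup|f-g|$. You instead establish only a uniform \emph{bound} on $\check K_\tau$ and then run a Markov--shortening argument with a geodesic test curve. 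Both routes are valid; the paper's is cleaner given that Lemma~\ref{uueequi} is already available, while your argument is more self-contained and in fact is essentially the standard way one would \emph{prove} the equi-Lipschitz part of Lemma~\ref{uueequi}.

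Two corrections are needed. First, in the boundedness step: ``continuity on compacta'' controls only bounded $s$, so it does not by itself give a uniform lower bound on $\inf_{s\ge s_0}h_{x_0,u_0}(x,s)$; you need the uniform boundedness part of Lemma~\ref{uueequi} (which applies since $(x_0,u_0)=(x(\tau),u(\tau))$ lies on a semi-static curve). Your appeal to (\ref{ba}) for the upper bound is also technically out of scope, since this appendix works under $|\partial_u H|\le\lambda$ rather than (H3); again Lemma~\ref{uueequi}, or simply $\check K_\tau(x)\le h_{x_0,u_0}(x,1)$, does the job. Second, in the $\hat K_\tau$ paragraph the displayed inequality $\hat K_\tau(x)\le\limsup_{t\to\infty}h^{x_0,u_0}(x,t)$ is backwards: a supremum is at least its $\limsup$. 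The correct upper bound on $\hat K_\tau$ comes directly from the equi-boundedness of $\{h^{x_0,u_0}(\cdot,t)\}_{t\ge s_0}$ that you cite immediately afterwards, so the argument survives once the sentence is rewritten.
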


 We only need to prove this proposition for $\check{K}_{\tau}(x)$, from which the Lipschitz continuity of $\hat{K}_{\tau}(x)$   can be obtained by a similar way.

\begin{lemma}\label{fu1}
	Let  $\mathcal{K}$ be a compact subset of $M$ and $u_0\in\mathbb{R}$. Then for any $x_0\in M\backslash \mathcal{K}$, we have $\lim_{t\to 0^+}h_{x_0,u_0}(x,t)=+\infty$ uniformly in $x\in\mathcal{K}$.
\end{lemma}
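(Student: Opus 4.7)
The plan is as follows. Fix $\gamma^*:[0,t]\to M$ to be a Lipschitz minimizer of $h_{x_0,u_0}(x,t)$ (existence from Proposition \ref{IVP}) and set $u^*(s):=h_{x_0,u_0}(\gamma^*(s),s)$, so that $u^*$ satisfies $\dot u=L(\gamma^*,\dot\gamma^*,u)$ with $u(0^+)=u_0$, and $h_{x_0,u_0}(x,t)=u^*(t)$. Writing $\delta:=\text{dist}(x_0,\mathcal{K})>0$, the minimizer has length $\int_0^t|\dot\gamma^*|\,ds\geq\delta$. The strategy is to reduce the estimate to a classical superlinearity argument for the ``frozen'' Lagrangian $L(\cdot,\cdot,u_0)$ by exploiting $|\partial_u L|\leq\lambda$ together with the sign information $\partial_u L\leq 0$.

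First, I would derive a uniform a priori lower bound $u^*(s)\geq-K$ on $[0,t]$ (for $t\leq 1$), with $K$ depending only on $u_0$, $\lambda$, and a constant $C_0>0$ satisfying $L(y,\dot y,0)\geq -C_0$ on $TM$ (such a $C_0$ exists by superlinearity of $L(\cdot,\cdot,0)$ plus compactness of $M$). Indeed, $L(y,\dot y,u)\geq -C_0-\lambda|u|$ yields the differential inequality $\dot u^*\geq -C_0-\lambda|u^*|$, and a scalar ODE comparison gives the claim. Next I would combine two consequences of $\partial_u L\in[-\lambda,0]$: on the one hand
\[
L(\gamma^*,\dot\gamma^*,u^*)\;\geq\;L(\gamma^*,\dot\gamma^*,u_0)-\lambda(u^*-u_0)_+;
\]
on the other hand, whenever $u^*(s)>u_0$, the monotonicity ``$L$ nonincreasing in $u$'' gives $\dot u^*\leq L(\gamma^*,\dot\gamma^*,u_0)$, which, integrated from the last time $u^*=u_0$ and using $L^+\leq L+C_0$, implies
\[
(u^*(s)-u_0)_+\;\leq\;\int_0^s L^+(\gamma^*,\dot\gamma^*,u_0)\,d\tau\;\leq\;F^*(t)+C_0 t,\qquad F^*(t):=\int_0^t L(\gamma^*,\dot\gamma^*,u_0)\,d\tau.
\]
Substituting both displays into $u^*(t)-u_0=\int_0^t L(\gamma^*,\dot\gamma^*,u^*)\,ds$ then yields
\[
u^*(t)-u_0\;\geq\;(1-\lambda t)\,F^*(t)-\lambda C_0 t^2.
\]

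To finish, uniform superlinearity of $L(\cdot,\cdot,u_0)$ on $TM$ (compactness of $M$ again) provides, for every $A>0$, a constant $C_A$ with $L(y,\dot y,u_0)\geq A|\dot y|-C_A$, whence $F^*(t)\geq A\delta-C_A t$ by the length bound. Given any target $R>0$, I would choose $A$ large enough that $A\delta$ dominates, then $t_R>0$ small enough that $(1-\lambda t_R)(A\delta-C_A t_R)-\lambda C_0 t_R^2\geq R$. Since $\delta$, $K$, $C_0$, $C_A$ and $\lambda$ are independent of $x\in\mathcal{K}$ and of the particular minimizer, this gives $h_{x_0,u_0}(x,t)\geq u_0+R$ uniformly in $x\in\mathcal{K}$ for every $t\in(0,t_R]$, proving the claimed uniform limit. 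The only delicate point is the bookkeeping in the middle paragraph — two coupled uses of the monotonicity of $L$ in $u$ are needed to decouple $u^*$ from itself, reducing the contact estimate to the plain superlinearity estimate for the frozen Lagrangian.
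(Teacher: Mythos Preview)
Your argument is correct under (H3), and it is a genuinely different---and more direct---route than the paper's. The paper proceeds by a two-step contradiction: it first introduces the auxiliary quantity $g_x(t):=\inf_{\gamma}\sup_{s\in[0,t]}h_{x_0,u_0}(\gamma(s),s)$ and shows $g_x(t)\to+\infty$ uniformly (by assuming a uniform upper bound, deriving a uniform lower bound via a Gronwall-type claim, and then contradicting superlinearity plus the length bound $\geq\delta$); in a second step it bootstraps from $g_x(t)>N$ to $h_{x_0,u_0}(x,t)>N$ by restarting the integral from the last time the action hits level $N$. Your approach bypasses $g_x$ entirely: you bound $(u^*-u_0)_+$ in terms of the frozen action $F^*(t)$ using the monotonicity $\partial_u L\leq 0$, then feed this back to get $u^*(t)-u_0\geq (1-\lambda t)F^*(t)-O(t^2)$, reducing everything to the classical superlinearity estimate in one shot. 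This is cleaner and avoids the auxiliary maximum function.

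There is, however, one point worth flagging. The appendix opens with the sentence ``For the sake of generality, we will prove all of the results in this appendix under (H1), (H2) and $|\partial_u H|\leq\lambda$ instead of (H3).'' The paper's proof indeed uses only the two-sided bound $|\partial_u L|\leq\lambda$ and never the sign. Your decoupling trick, by contrast, uses $\partial_u L\leq 0$ in two essential places: to get $L(\cdot,\cdot,u^*)\geq L(\cdot,\cdot,u_0)$ when $u^*<u_0$ (so that only $(u^*-u_0)_+$ appears), and to get $\dot u^*\leq L(\cdot,\cdot,u_0)$ when $u^*>u_0$. Without the sign you would be left with $L(\cdot,\cdot,u^*)\geq L(\cdot,\cdot,u_0)-\lambda|u^*-u_0|$, and controlling $(u_0-u^*)_+$ requires the a priori lower bound $u^*\geq -K$, which by itself only gives $|u^*-u_0|\leq K+|u_0|$ and yields $u^*(t)-u_0\geq F^*(t)-\lambda(K+|u_0|)t$---still enough, in fact, since $F^*(t)\geq A\delta-C_A t$. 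So your proof can be repaired to match the paper's generality, but as written it invokes (L3) and thus proves slightly less than the paper claims.
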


\begin{proof}
Given $(x,t)\in \mathcal{K}\times(0,+\infty)$, let  $\Gamma^t_{x_0,x}$ be the set of the minimizers  of $h_{x_0,u_0}(x,t)$.  Namely, for each $\gm:[0,t]\to M$ contained in $\Gamma^t_{x_0,x}$, we have $\gm(0)=x_0$, $\gm(t)=x$ and
\begin{equation}\label{hchu}
h_{x_0,u_0}(x,t)=u_0+\int_0^tL\big(\gamma(\tau),\dot{\gamma}(\tau),h_{x_0,u_0}(\gamma(\tau),\tau)\big)d\tau.
\end{equation}
Let
	\[
	g_{x}(t):=\inf_{\gm\in \Gamma^t_{x_0,x}}\sup_{0\leq s\leq t}h_{x_0,u_0}(\gamma(s),s).
	\]
	{We split} the remaining proof by two steps.

	\medskip
\noindent  {\bf Step 1}: we show $\lim_{t\to 0^+}g_{x}(t)=+\infty$, uniformly in $x\in \mathcal{K}$.\\[2mm]
	By contradiction, we assume there exist   $x_n\in\mathcal{K}$ and  $\gamma_n\in \Gamma^{t_n}_{x_0,x_n}$ with $t_n\to 0$ as $n\to +\infty$ such that
	\begin{align}\label{2-600}
	h_{x_0,u_0}(\gamma_n(s),s)<C_1,\quad \forall s\in[0,t_n],
	\end{align}
where $C_1$ is a constant independent of $n$.
Let $A:=\inf_{(x,\dot{x})\in TM}L(x,\dot{x},u_0)$. Given $T_0>0$,
let \[C_2:=|u_0|e^{\lambda T_0}+\frac{|A-\lambda u_0|}{\lambda}\left(e^{\lambda T_0}-1\right)+1,\] where $\lambda$ is a Lipschitz constant of $H(x,p,u)$ w.r.t. $u$.

\bigskip
\noindent \textbf{Claim.} For any $(x,t)\in M\times (0,T_0]$, $h_{x_0,u_0}(x,t)>-C_2$.
\bigskip

\noindent {\it Proof of the claim.}
We assume by contradiction that there exists $(x_1,t_1)\in M\times (0,T_0]$ such that $h_{x_0,u_0}(x_1,t_1)\leq -C_2$. Let $\gamma\in \Gamma^{t_1}_{x_0,x_1}$. Denote
$u(s):=h_{x_0,u_0}(\gamma(s),s)$ for $s\in [0,t_1]$. Note that $u_0\geq -C_2$. Since $u(s)$ is continuous on $(0,t_1]$, and $\lim_{s\rightarrow0^+}u(s)=u_0$, there exists a closed interval $[{s}_1,{s}_2]\subseteq[0,t_1]$ such that
\[
u({s}_1)=u_0,\quad u({s}_2)=-C_2,\quad -C_2\leq u(s)\leq u_0,\quad \forall s\in[{s}_1,{s}_2].
\]
Since $\gamma$ satisfies (\ref{hchu}), based on the variational principle (see Proposition \ref{IVP}),
\begin{equation*}
  \dot{u}(s)=L(\gamma(s),\dot{\gamma}(s),u(s))\geq A+\lambda(u(s)-u_0),\ \ \forall s\in[{s}_1,{s}_2].
\end{equation*}
A direct calculation yields for any $s\in[{s}_1,{s}_2]$,
\begin{align*}
u(s)&\geq u_0 e^{\lambda(s-{s}_1)}+\frac{A-\lambda u_0}{\lambda}\left(e^{\lambda(s-{s}_1)}-1\right)\\
&\geq-|u_0|e^{\lambda T_0}-\frac{|A-\lambda u_0|}{\lambda}\left(e^{\lambda T_0}-1\right)\\
&>-C_2.
\end{align*}
This contradicts $u({s}_2)=-C_2$. Then the claim is true.

\bigskip

For $n$ large enough, we have $t_n<T_0$. Let $C:=\max\{C_1,C_2\}$. Based on (\ref{2-600}) and the assertion above,
\begin{equation}\label{hccc}
|h_{x_0,u_0}(\gm_n(s),s)|\leq C,\quad \forall s\in [0,t_n].
\end{equation}

	Let $\delta:=\text{dist}(x_0,\mathcal{K})$, where dist$(\cdot,\cdot)$ denotes a distance induced by the Riemannian metric on $M$. Let
 \[B:=\frac{C+1+|u_0|}{\delta}.\]
Since $L(x,\dot{x},0)$ is superlinear in $\dot{x}$, then there is $D:=D(B)\in\mathbb{R}$ such that $L(x,\dot{x},0)\geq B\|\dot{x}\|_x-D$ for all $(x,\dot{x})\in TM$. Since $t_n\to 0^+$ as $n\to +\infty$, for $n$ large enough, we get $|(D+\lambda C)t_n|<1$. Note that
	\begin{align*}
	h_{x_0,u_0}(x_n,t_n)&=u_0+\int_0^{t_n}L(\gamma_n(s),\dot{\gamma}_n(s),h_{x_0,u_0}(\gamma_n(s),s))ds\\
	&\geq u_0+\int_0^{t_n}L(\gamma_n(s),\dot{\gamma}_n(s),0)ds-\lambda\int_0^{t_n}|h_{x_0,u_0}(\gm_n(s),s)|ds\\
	&\geq u_0+B\delta-Dt_n-\lambda C t_n\\
	&= u_0+B\delta-(D+\lambda C)t_n\\
	&>C,
	\end{align*}
	which contradicts (\ref{hccc}). Therefore, $\lim_{t\to 0^+}g_{x}(t)=+\infty$, uniformly in $x\in \mathcal{K}$.

\medskip
\noindent  {\bf Step 2}:
	 we show $\lim_{t\to 0^+}h_{x_0,u_0}(x,t)=+\infty$, uniformly for all $x\in\mathcal{K}$.\\[2mm]
	From Step 1, for any $N>0$, there is $t_N>0$ such that $g_{x}(t)>N$ for $t<t_N$ and all $x\in\mathcal{K}$.  Let $\gm\in \Gamma_{x_0,x}^t$. Note that $h_{x_0,u_0}(\gamma(s),s)\to u_0$ as $s\to 0^+$. One can find $s_0\in [0,t]$ such that $h_{x_0,u_0}(\gamma(s_0),s_0)=N$.

Note that
	\begin{align*}
	h_{x_0,u_0}(x,t)=h_{x_0,u_0}(\gamma(s_0),s_0)+\int_{s_0}^tL(\gamma(s),\dot{\gamma}(s),h_{x_0,u_0}(\gamma(s),s))ds.
	\end{align*} Similar to the argument above, for $t<t_N$, we have
	\begin{align*}
	h_{x_0,u_0}(x,t)&\geq N+\int_{s_0}^tL(\gamma(s),\dot{\gamma}(s),0)ds-\lambda\int_{s_0}^t|h_{x_0,u_0}(\gamma(s),s)|ds\\
	&\geq N+B\delta-(D+\lambda C)(t-s_0),
	\end{align*}
	Let $t\to 0^+$. Then $t-s_0\to 0^+$. Moreover,
	$h_{x_0,u_0}(x,t)>N$ for each $x\in \mathcal{K}$, which completes the proof.	
\end{proof}

By \cite[Lemma C.1]{WWY4}, we have
\begin{lemma}\label{uueequi}
Let $(x(\cdot),u(\cdot)):\R\ri M\times\mathbb{R}$ be a semi-static curve. Then for each $\delta>0$,
\begin{itemize}
\item \text{Uniform Boundedness}:   there exists a constant $K>0$ independent of $t$ such that for $t>\delta$ and each $x\in M$, $s\in \R$, \[|h_{x(s),u(s)}(x,t)|\leq K,\quad|h^{x(s),u(s)}(x,t)|\leq K;\]
\item \text{Equi-Lipschitz Continuity}:  there exists a constant $\kappa>0$ independent of $t$ such that for $t> 2\delta$ and $s\in \R$, both $x\mapsto h_{x(s),u(s)}(x,t)$  and $x\mapsto h^{x(s),u(s)}(x,t)$ are $\kappa$-Lipschitz continuous on $M$.
\end{itemize}
\end{lemma}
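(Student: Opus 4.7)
The plan is to exploit the Markov property of the action functions together with the fact that a semi-static curve lies on the graph of both a backward and a forward weak KAM solution (Proposition \ref{pr119955}). I will fix $v_-\in\cS_-$ and $v_+\in\cS_+$ with $u(s)=v_-(x(s))=v_+(x(s))$ for all $s\in\R$, and set $C_0:=\max(\|v_-\|_\infty,\|v_+\|_\infty)$, finite because $M$ is compact. I prove the bounds for $h_{x(s),u(s)}(x,t)$ in detail; the bounds for $h^{x(s),u(s)}(x,t)$ follow by a strictly parallel argument with infima replaced by suprema.

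For the uniform boundedness of $h_{x(s),u(s)}(x,t)$, taking $y=x(s)$ as a competitor in the formula $T_t^-v_-(x)=\inf_y h_{y,v_-(y)}(x,t)$ (Proposition \ref{pr4.2}) together with $T_t^-v_-=v_-$ (Proposition \ref{pr4.5}) yields the lower bound
\[ h_{x(s),u(s)}(x,t)\geq T_t^-v_-(x)=v_-(x)\geq -C_0. \]
For the upper bound, since the semi-static curve is globally minimizing we have $h_{x(s),u(s)}(x(s+t-\delta),t-\delta)=u(s+t-\delta)$, so the Markov property (Proposition \ref{pri}(4)) yields
\[ h_{x(s),u(s)}(x,t)\leq h_{x(s+t-\delta),u(s+t-\delta)}(x,\delta). \]
Since $|u(s+t-\delta)|\leq C_0$, the right hand side is a value of the locally Lipschitz function $(x_0,u_0,x)\mapsto h_{x_0,u_0}(x,\delta)$ restricted to the compact set $M\times[-C_0,C_0]\times M$, hence uniformly bounded by a constant $K$ that depends on $\delta$ but not on $s$ or $t$. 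The analogous upper bound on $h^{x(s),u(s)}(x,t)$ uses $T_t^+v_+=v_+$ together with Proposition \ref{pr4.2}, and the analogous lower bound uses Proposition \ref{pri1}(4) together with the dual identity $u(s-(t-\delta))=h^{x(s),u(s)}(x(s-(t-\delta)),t-\delta)$ coming from Proposition \ref{pr11}.

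For the equi-Lipschitz continuity with $t>2\delta$, I iterate the Markov property once more:
\[ h_{x(s),u(s)}(x,t)=\inf_{y\in M} h_{y,\,h_{x(s),u(s)}(y,t-\delta)}(x,\delta). \]
Since $t-\delta>\delta$, the uniform bound above gives $h_{x(s),u(s)}(y,t-\delta)\in[-K,K]$ for all $y\in M$. By Proposition \ref{pri}(3), $(y,w,x)\mapsto h_{y,w}(x,\delta)$ is Lipschitz on the compact $M\times[-K,K]\times M$, hence $\kappa$-Lipschitz in $x$ uniformly in $(y,w)$; since an infimum of a family of $\kappa$-Lipschitz functions is $\kappa$-Lipschitz, this gives the desired estimate. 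The argument for $h^{x(s),u(s)}(\cdot,t)$ is identical in structure, using Proposition \ref{pri1} in place of Proposition \ref{pri}. The main obstacle is the upper bound in the boundedness step: it must be controlled \emph{uniformly in $s$ and $t$}, and this is only possible because semi-staticity forces the full orbit $\{(x(\sigma),u(\sigma))\}_{\sigma\in\R}$ to stay inside the fixed compact set $M\times[-C_0,C_0]$, so the intermediate base point $(x(s+t-\delta),u(s+t-\delta))$ remains in a bounded region on which the action function is controlled regardless of how large $|s|$ and $t$ are.
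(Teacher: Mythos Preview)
The paper does not prove this lemma; it simply cites \cite[Lemma C.1]{WWY4}. Your argument is a complete and correct direct proof. Your equi-Lipschitz step---decompose via the Markov property at time $\delta$ and use uniform Lipschitz continuity of $h_{\cdot,\cdot}(\cdot,\delta)$ on a compact slab---is precisely the computation the paper carries out later in the proof of Lemma~\ref{hyxuxttttt} for the special case $h^{y,u_-(y)}(\cdot,t)$. Your treatment of uniform boundedness is more explicit than the paper's: the lower bound via the fixed-point identity $T_t^-v_-=v_-$ and the upper bound via the Markov property combined with the globally-minimizing identity $h_{x(s),u(s)}(x(s+t-\delta),t-\delta)=u(s+t-\delta)$ give a clean self-contained argument, whereas the paper outsources this step to \cite{WWY3,WWY4}.

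One minor remark: your proof invokes Proposition~\ref{pr119955}, which in the paper is stated after the blanket assumption $\cS_-\neq\emptyset$ (equivalently (A)), while the appendix advertises Lemma~\ref{uueequi} under $|\partial_uH|\le\lambda$ alone. This is harmless for all applications in the body of the paper, and in any case the existence of a semi-static curve already forces $\cS_-\neq\emptyset$ once one traces the argument in \cite{WWY4}.
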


\medskip
\noindent{\it Proof of Proposition \ref{immm}.}  We only need to prove this proposition for $\check{K}_{\tau}(x)$. Let $U$ be an open neighborhood of $x(\tau)$ and $x\in \mathcal K:=M\backslash U$. By Lemma \ref{fu1}, we have $\lim_{t\to 0^+}h_{x(\tau),u(\tau)}(x,t)=+\infty$ uniformly for $x\in\mathcal K$. Thus, there exists $\delta>0$ independent of $x\in\mathcal K$ such that
	\[
	\check{K}_{\tau}(x):=\inf_{s>0}h_{x(\tau),u(\tau)}(x,s)=\inf_{s>\delta}h_{x(\tau),u(\tau)}(x,s),\quad \forall x\in \mathcal{K}.
	\]
It follows from Lemma \ref{uueequi} that
\begin{align*}
&\left|\check{K}_{\tau}(x)-\check{K}_{\tau}(y)\right|\\
=&\left|\inf_{s>\delta}h_{x(\tau),u(\tau)}(x,s)-\inf_{s>\delta}h_{x(\tau),u(\tau)}(y,s)\right|\\
\leq &\sup_{s>\delta}\left|h_{x(\tau),u(\tau)}(x,s)-h_{x(\tau),u(\tau)}(y,s)\right|\\
\leq &\kappa\ d(x,y).
\end{align*}
This completes the proof.
\EEnd

\section{Proof of Proposition \ref{exppp}}\label{proofexppp}

It is clear that $u_-\equiv 0$ is the unique element in $\cS_-$, and $u_+\equiv 0\in \cS_+$.
\subsection{On Item (i)}
{The contact Hamilton equations read}
\begin{align}\label{xjja}
\left\{
        \begin{array}{l}
        \dot{x}=p+V(x),\\
        \dot{p}=-pV'(x)-\lambda p,\\
        \dot{u}=p(p+V(x))-H(x,p,u).
         \end{array}
         \right.
\end{align}
Denote the solution of (\ref{xjja}) by $(x(t),p(t),u(t))$.

A direct calculation shows
\[\frac{dH}{dt}=-\lambda H(x(t),p(t),u(t)).\]
Thus, we only need to consider the dynamics on zero energy level set
\[E:=\{(x,p,u)\in T^*\mathbb{T}\times\R\ |\ H(x,p,u)=0\}.\]
To verify Item (i), it suffices to consider the linearization of (\ref{xjja}) in a neighborhood of $(x_2,0)\in T^*\mathbb{T}$. It is formulated as
\begin{align}\label{limaxt}
\left[
        \begin{array}{c}
        \dot{x}\\
        \dot{p}\\
         \end{array}
         \right]=\left[
        \begin{array}{cc}
        V'(x_2)& 1\\
        0& -(V'(x_2)+\lambda)\\
         \end{array}
         \right]
       \left[ \begin{array}{c}
        x-x_2\\
        p\\
         \end{array}
         \right].
\end{align}
By the assumptions on $V(x)$, Item (i) holds.

\subsection{On Item (ii)}
This item was proved by \cite[Proposition 1]{WWY4}. We omit it.

\subsection{On Item (iii)}
To fix the notations, we use $\mathcal{D}_{v_+}$ to denote the set of differentiable points of $v_+$. For each $v_+\in \cS_+$,  we know that it is semiconvex with linear modulus (see \cite[Theorem 5.3.6]{CS}). Moreover, $\mathcal{D}_{v_+}$ has full Lebesgue measure on $\T$.
  Denote $\check{\M}:=\Pi^*\tilde{\mathcal{M}}$, where $\Pi^*:T^*\T\times\R\to T^*\T$. Namely, $\check{\M}$ denotes the projection of the Mather set $\tilde{\mathcal{M}}$ to $T^*\T$. Let $\check\Phi_t:=\Pi^*\Phi_t$.

  The following lemma is from \cite[Proposition 4.5]{dw}. In \cite{dw}, the Hamiltonian is required to be of class $C^3$ since that work is based on the Aubry-Mather theory for contact Hamiltonian systems developed in \cite{WWY2}. In \cite{WWY2}, $C^3$ is assumed for a technical reason. By \cite{MS}, the requirement of $C^3$ regularity can be reduced to  $C^2$  since this lemma only involves Aubry-Mather theory for discounted systems.
\begin{lemma}\label{locstab}
Let us consider
\begin{equation}\label{last HJ eq}
 \lambda u+\check H(x,d_x u)=c(\check H)\qquad\hbox{in $\T$},
\end{equation}
where $\check H:T^*\T\to\R$ is a $C^2$--Hamiltonian, satisfying Tonelli assumptions and $\T$ is a flat circle.
 Let   $(x_0,0)\in \check{\M}$ be
 a saddle point for the discounted flow generated by
  \begin{align}\label{cott}\tag{DH}
        \begin{cases}
        \dot{x}=\frac{\partial { \check H  }}{\partial p}(x,p),\smallskip\\
        \dot{p}=-\frac{\partial { \check H  }}{\partial x}(x,p)- \lambda p.
         \end{cases}
\end{align}
Given $v_+\in \mathcal{S}_+$ with $v_+(x_0)=u_-(x_0)$, let $\bar{x}_1,\bar{x}_2$ be two differentiable points of $v_+$ with $x_0\in (\bar{x}_1,\bar{x}_2)$. Denote $\bar{p}_i:=d_{\bar{x}_i}v_+$ with $i=1,2$. If
\[(x_0,0)\in \omega(\bar{x}_1,\bar{p}_1)\cap \omega(\bar{x}_2,\bar{p}_2),\]
where $ \omega(\bar{x}_i,\bar{p}_i)$ denotes the $\omega$-limit set of $(\bar{x}_i,\bar{p}_i)$. Then there exists $\delta:=\delta(v_+)>0$ such that  $d_xv_+$ exists for all $x\in [x_0-\delta,x_0+\delta]$, and the set
\[\{(x,d_xv_+)\ |\ x\in [x_0-\delta,x_0+\delta]\}\]
coincides with the local stable submanifold of $(x_0,0)$.
\end{lemma}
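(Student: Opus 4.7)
The plan combines three ingredients: the forward calibration property of $v_+\in\mathcal S_+$, hyperbolicity of the saddle $(x_0,0)$ for (DH), and semiconvexity of $v_+$. First I would invoke forward calibration: at every differentiable point $\bar x$ of $v_+$, the forward orbit $t\mapsto\check\Phi_t(\bar x, d_{\bar x} v_+)$ calibrates $v_+$, so it stays in $G_{v_+}$ for all $t\geq 0$. Applied to $\bar x_1,\bar x_2$, the two orbits $\check\Phi_t(\bar x_i,\bar p_i)$ therefore lie on the graph of $dv_+$ over their $x$-projections.

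Next I would apply the stable manifold theorem to (DH) at the hyperbolic equilibrium $(x_0,0)$. The linearization there is
\[
A=\begin{pmatrix}\partial_{xp}\check H(x_0,0) & \partial_{pp}\check H(x_0,0)\\ -\partial_{xx}\check H(x_0,0) & -\partial_{xp}\check H(x_0,0)-\lambda\end{pmatrix},
\]
and Tonelli convexity gives $\partial_{pp}\check H(x_0,0)>0$, which forces the stable eigenvector to have nonzero $x$-component. Hence $W^s_{\mathrm{loc}}(x_0,0)$ is a $C^2$ graph $p=\psi(x)$ over a closed interval $[x_0-\delta,x_0+\delta]$ with $\psi(x_0)=0$, and on each branch $\{x<x_0\}$, $\{x>x_0\}$ of $W^s_{\mathrm{loc}}$ the $x$-projection of the flow is strictly monotone toward $x_0$. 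Since $(x_0,0)$ is an isolated hyperbolic fixed point, the hypothesis $(x_0,0)\in\omega(\bar x_i,\bar p_i)$ forces $\check\Phi_t(\bar x_i,\bar p_i)$, for all sufficiently large $t$, to lie on $W^s_{\mathrm{loc}}$ and to converge monotonically along it to $(x_0,0)$.

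Combining these observations, the $x$-projections of $\{\check\Phi_t(\bar x_i,\bar p_i):t\geq T\}$ sweep out open intervals adjoining $x_0$, and the momenta coincide with $\psi$ at every swept abscissa. Using $\bar x_1<x_0<\bar x_2$, I would argue that the two orbits enter opposite branches of $W^s_{\mathrm{loc}}$, exploiting the fact that the calibrating orbit of $v_+$ starting at a differentiable point $\bar x_i$ cannot cross the static point $x_0$ (a consequence of $v_+(x_0)=u_-(x_0)$). Their $x$-projections therefore jointly cover some $(x_0-\delta',x_0+\delta')\setminus\{x_0\}$, on which $dv_+$ exists and equals the continuous function $\psi$. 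Semiconvexity of $v_+$ with linear modulus then upgrades this to differentiability on the entire closed interval $[x_0-\delta',x_0+\delta']$ with $dv_+\equiv\psi$, and the graph of $dv_+$ over this interval coincides with $W^s_{\mathrm{loc}}(x_0,0)$.

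The main obstacle is the third step, namely guaranteeing that the two calibrating orbits enter \emph{different} branches of $W^s_{\mathrm{loc}}$: a priori both could settle on the same branch, leaving one side of $x_0$ uncovered. Overcoming this requires the joint use of $v_+(x_0)=u_-(x_0)$ (to prevent the calibrating orbit of $\bar x_i$ from crossing $x_0$) and the monotonicity of the flow on each branch of $W^s_{\mathrm{loc}}$; the former is the place where the hypothesis that ties $v_+$ to the unique backward solution at the saddle is crucial.
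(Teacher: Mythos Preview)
The paper does not prove this lemma: it is quoted verbatim as \cite[Proposition 4.5]{dw} (Davini--Wang), with only a remark that the $C^3$ hypothesis there can be relaxed to $C^2$ via \cite{MS}. There is therefore no in-paper proof to compare against; your outline is a self-contained attempt at what the cited reference does.

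Your three-ingredient strategy (forward calibration of $v_+$, stable manifold theorem at the hyperbolic saddle, semiconvexity to close up at $x_0$) is the natural one and almost certainly matches the argument in \cite{dw}. One step you pass over too quickly is the inference ``$(x_0,0)\in\omega(\bar x_i,\bar p_i)$ forces $\check\Phi_t(\bar x_i,\bar p_i)$ to lie on $W^s_{\mathrm{loc}}$ for large $t$''. For a planar saddle this is \emph{not} a consequence of hyperbolicity alone: an orbit can accumulate on a saddle without converging to it (its $\omega$-limit set then contains a piece of the unstable manifold, by the $\lambda$-lemma). What rules this out here is precisely that the forward orbit stays in $G_{v_+}$, the closure of a Lipschitz graph over $x$; any excursion along the unstable branch followed by a return near the saddle would force two distinct $p$-values over the same $x$ in $G_{v_+}$, contradicting the graph structure coming from semiconvexity. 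You should make this argument explicit. The branch-separation step you flag as the main obstacle is indeed delicate, and your proposed mechanism (the calibrating curve from $\bar x_i$ cannot cross the static point $x_0$ because $v_+(x_0)=u_-(x_0)$) is the right one.
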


%
%

Next, we prove Item (iii). Note that $u_-\equiv 0$ is the classical solution. Then  $u_+\equiv 0$ is the maximal forward weak KAM solution.
 By \cite[Proposition 10]{WWY4}, if the forward weak KAM solution is unique, then $\tilde{\cS}_s=\tilde{\A}$. It follows from Proposition \ref{exppp}(ii) that there exists $v_+\in \cS_+$  different from  $u_+$. Thus, $v_+\leq 0$ and there exists $x_0\in \mathbb{T}$ such that $v_+(x_0)<0$. Consider
 \[\mathcal{I}:=\{x\in \mathbb{T}\ |\ v_+(x)=0\}.\]
  Then $\mathcal{I}$ is a compact invariant set by $\pi^*\Phi_t$, where $\pi^*:T^*\mathbb T\times\R\ri \mathbb T$ denotes the standard projection. Denote a fundamental domain of $\mathbb{T}$ by $[x_1,x_1+1)$. Consequently, there are several possibilities for $\mathcal{I}$ restricting on $[x_1,x_1+1)$:
 \begin{equation}\label{iasett}
 \{x_1\},\ \{x_2\},\ \{x_1,x_2\},\ [x_1,x_2],\ [x_2,x_1+1)\cup \{x_1\},\ [x_1,x_1+1).
 \end{equation}

The remaining proof is divided into two steps.

\vspace{1ex}

\noindent{\bf Step 1.} For each $v_+\in \cS_+$, $v_+(x_2)<0$.

\noindent
We assert that if $v_+(x_2)=0$, then $v_+\equiv u_+=0$. In fact, by (\ref{iasett}), it suffices to show that there exists $\eps>0$, such that $v_+=0$ on $[x_2-\eps,x_2+\eps]$. By contradiction, we assume there exists $\bar{x}\in \mathcal{D}_{v_+}\cap [x_2-\frac{1}{2}\eps,x_2)$, such that $v_+(\bar{x})<0$. Let $\bar{p}:=d_{\bar{x}}v_+$. Let $(x(t),p(t)):=\check\Phi_t(\bar{x},\bar{p})$ for all $t\geq 0$. Then one can find $t_0\geq 0$ such that $x(t_0)\in [x_2-\eps, x_2)$ and $d_xv$ exists at $x=x(t_0)$ with $d_{x(t_0)}v_+>0$. Note that
\[\dot{x}(t_0)=p(t_0)+V(x(t_0))=d_{x(t_0)}v_++V(x(t_0)).\]
By the definition of $V(x)$, $V(x(t_0))>0$. It follows that $\dot{x}(t_0)>0$. Thus, $(x_2,0)\in \omega(\bar{x},\bar{p})$. By (\ref{limaxt}), if $\lambda<|V'(x_2)|$, $(x_2,0)$ is a saddle point in $\check{\M}$. By Lemma \ref{locstab},
$v_+=0$ on $[x_2-\eps,0]$. Similarly, we have  $v_+=0$ on $[x_2-\eps,x_2+\eps]$.

By the assertion above, we have $v_+(x_2)<0$ and $\mathcal{I}=\{x_1\}$.

\vspace{1ex}

\noindent{\bf Step 2.} The forward weak KAM solution $v_+$ different from $u_+\equiv  0$ is unique.

\noindent Let $\bar{x}\in \mathcal{D}_{v_+}\cap (x_1,x_2)$.  By Proposition \ref{exppp}(ii),
\begin{equation}\label{math44}
\tilde{\mathcal{M}}=\left\{(x_1,0,0),\ (x_2,0,0)\right\},
\end{equation}
which together with  $v_+(x_2)<0$ implies $(x_1,0)\in \omega(\bar{x},\bar{p})$. In fact, if $(x_1,0)\notin \omega(\bar{x},\bar{p})$, then $(x_2,0)\in \omega(\bar{x},\bar{p})$. Moreover, $v_+(x(t))\to v_+(x_2)<0$ as $t\to+\infty$, which contradicts (\ref{math44}).
By (\ref{limaxt}),  $(x_1,0)\in \check{\mathcal{M}}$ is a saddle point. By Lemma \ref{locstab}, for any $v_+\in \cS_+$, there exists $\delta:=\delta(v_+)>0$ such that  $d_xv_+$ exists for all $x\in [x_1-\delta,x_1+\delta]$, and the set
\[\{(x,d_xv_+)\ |\ x\in [x_1-\delta,x_1+\delta]\}\]
coincides with the local stable submanifold of $(x_1,0)$.

By contradiction, we assume there exists another $\bar{v}_+$ that is different from both $u_+$ and $v_+$. From the discussion above, we have $\bar{v}_+(x_1)=v_+(x_1)=0$ and   one can find ${\delta'}>0$ such that $d_x\bar{v}_+=d_xv_+$ on $[x_1-\delta',x_1+\delta']$.
That yields $\bar{v}_+={v}_+$ on $[x_1-\delta',x_1+\delta']$. By \cite[Proposition 4]{dw}, $\bar{v}_+=v_+$ on $\T$. Therefore, the forward weak KAM solution $v_+$ different from $u_+$ is unique.

 \subsection{On Item (iv)}

 To verify Item (iv), we only need to construct an example to show that for $\lambda$ large enough, $\cS_+$  contains more than two elements. We choose $\lambda=3$ and $V(x)=\sin x$. Then $x_1=0$, $x_2=\pi$. Let
 \[\varphi(x):=\cos x-1,\quad \psi(x):=-\cos x-1.\]
 A direct calculation shows that
 \[3\varphi+\frac{1}{2}|D\varphi|^2+D\varphi\cdot \sin x\leq 0,\quad 3\psi+\frac{1}{2}|D\psi|^2+D\psi\cdot \sin x\leq 0,\]
 which means both $\varphi$ and $\psi$ are viscosity subsolutions of
 \begin{equation}\label{uf2d}
3 u(x)+\frac{1}{2}|Du|^2+Du\cdot \sin(x)=0,\quad \ x\in \T.
\end{equation}
 Then
 \begin{equation}\label{monnnn4}
 T_t^+\varphi\leq \varphi,\quad T_t^+\psi\leq \psi.
 \end{equation}
 Note that $\varphi,\psi\leq 0$ and $\varphi(0)=\psi(\pi)=0=u_-(0)$. By \cite[Proposition 13]{WWY3}, both $T_t^+\varphi$ and $T_t^+\psi$ converge as $t\to+\infty$. Let
 \[u_1(x):=\lim_{t\to+\infty}T_t^+\varphi, \quad u_2(x):=\lim_{t\to+\infty}T_t^+\psi.\]
 Then $u_1,u_2\in \cS_+$. By (\ref{monnnn4}) and the constructions of $\varphi$ and $\psi$, we know that $u_1$, $u_2$ and $u_+\equiv 0$ are different from each other.

\end{appendix}

\medskip

\end{document}